\newcommand{\LST}{\mathscr{L}}
\newcommand{\PM}{\mathscr{P}}
\newcommand{\Sl}{\mathfrak{sl}_2}
\newcommand{\R}{\mathscr{R}}
\newcommand{\RC}{\mathcal{R}}
\newcommand{\C}{\mathcal{C}}
\newcommand{\N}{\mathbb{N}}
\newcommand{\Le}{\left}
\newcommand{\Ri}{\right}
\newcommand{\Span}{{\rm span}}
\newcommand{\ad}{{\rm ad}}
\newcommand{\rank}{{\rm rank}}
\newcommand{\0}{\mathbf{0}}
\newcommand{\m}{\mathbf{n}}
\newcommand{\IM}{{\rm Im}\,}
\newcommand{\ie}{{\em i.e.,} }
\newcommand{\eg}{{\em e.g.,} }
\newcommand{\ddx}{\frac{\partial}{\partial x}}
\newcommand{\ddy}{\frac{\partial}{\partial y}}
\newtheorem{thm}{Theorem}[section]
\newtheorem{cor}[thm]{Corollary}
\newtheorem{lem}[thm]{Lemma}
\newtheorem{prop}[thm]{Proposition}
\newtheorem{exm}[thm]{Example}
\theoremstyle{definition}
\newtheorem{defn}[thm]{Definition}
\newtheorem{rem}[thm]{Remark}
\newtheorem{notation}[thm]{Notation}
\numberwithin{equation}{section}
\def\be {\begin{equation}}
\def\ee {\end{equation}}
\def\ba {\begin{eqnarray}}
\def\ea {\end{eqnarray}}
\def\bes {\begin{equation*}}
\def\ees {\end{equation*}}
\def\bas {\begin{eqnarray*}}
\def\eas {\end{eqnarray*}}
\def\bpr {\begin{proof}}
\def\epr {\end{proof}}
\begin{document}
\baselineskip=18pt
\renewcommand {\thefootnote}{\dag}
\renewcommand {\thefootnote}{\ddag}
\renewcommand {\thefootnote}{ }
\pagestyle{empty}
\begin{center}
                \leftline{}
                \vspace{-0.20 in}
{\Large \bf Parametric normal forms for Bogdanov--Takens singularity; the generalized saddle-node case
} \\ [0.3in]

{\large Majid Gazor$^{*}$ and Mojtaba Moazeni }
\footnote{$^*\,$Corresponding author. Phone: (+98-311) 3913634; Fax: (+98-311) 3912602; Email: mgazor@cc.iut.ac.ir}

\vspace{0.15in}
{\small {\em Department of Mathematical Sciences,
Isfahan University of Technology
\\[-0.5ex]
Isfahan 84156-83111, Iran
}}

\today
\vspace{-0.4in}

\end{center}
\vspace{0.4in}
\noindent
\rule{7.10in}{0.012in}
\vspace{0.1in}

\noindent  We obtain a parametric normal form for any non-degenerate perturbation of the generalized saddle-node case of Bogdanov--Takens singularity.
Explicit formulas are derived and greatly simplified for an efficient implementation in any computer algebra system. A Maple program is prepared for an automatic parametric normal form computation. A section is devoted to present some practical formulas which avoid technical details of the paper.

\vspace{0.10in}
\noindent
{\it Keywords}: \ Bogdanov--Takens singularity; Generalized saddle-node; Parametric normal form.

\vspace{0.10in}
\noindent {\it 2010 Mathematics Subject Classification}:\,
34C20, 34A34, 16W5, 68U99.

\section{Introduction}

Any planar differential system
\bes
[\dot{x}, \dot{y}]:= \Le[\sum^\infty_{i+j=2} a_{ij} x^iy^j, -x+\sum^\infty_{i+j=2} b_{ij}x^iy^j\Ri],
\ees is associated with the vector field
\be\label{eq1}
v(x, y):= -x\ddy+ {\rm h.o.t},\quad x, y\in \mathbb{R}
\ee where {\rm h.o.t} denotes nonlinear (higher order) terms in \((x, y)\), and vice versa. Given this, the words \emph{vector fields} and \emph{differential systems} are interchangeably used. A classical normal form of the (unperturbed) Bogdanov--Takens singularity \eqref{eq1} is
\be\label{eq2} v^{(1)}(x,y)= -x\ddy+\sum^\infty_{k=1} a_{k} y^{k+1}\ddx+ \sum^\infty_{k=1} b_{k}y^{k}\Le(x\frac{\partial}{\partial x}+ y\frac{\partial}{\partial y}\Ri).\ee Assume that there exist \(i\) and \(j\) such that \(a_i, b_j\neq0\) and define
\be\label{rs} r_1:=\min\{k\,|\,a_k\neq0\}\hbox{ and } s:=\min\{k\,|\,b_k\neq0\};\ee
see \cite{baidersanders} where the authors use \(\mu\) for \(r_1,\) and \(\nu\) for \(s\).
Throughout this paper we assume that \(2s<r_1\). Following Str\'{o}\.{z}yna and \.{Z}oladek we call this case \textit{the generalized saddle-node case} of Bogdanov--Takens singularity; see \cite{Stroyzyna,Zoladek02,Zoladek03} for more details. Most real life applications of Bogdanov-Takens singularity appear in spaces with more than two dimensions. Here, we have assumed that a center manifold reduction has already been applied.

Our main result in this paper is as follows. For any (\(N\)-degree truncated) multiple-parametric non-degenerate perturbation of the generalized saddle-node case of Bogdanov--Takens singularity \(v(x,y, \mu)\), there exist invertible parametric changes of state variables, time rescaling and reparametrization that transform \(v(x,y, \mu)\) into the truncated parametric normal form
\ba\nonumber
\dot{x}&= & \mu_{s}+\mu_{s+1}y+ \sum^n_{i=1} (\alpha_{j_i}+\mu_{i+s+1}) y^{j_i+1}+xy^s+\sum^{s-2}_{k=0} \mu_{k+1}xy^{k},
\\\label{PNF0}
\dot{y}&=& -x+ y^{s+1}+\sum^{s-2}_{k=0} \mu_{k+1}y^{k+1}.
\ea Here \(j_i\neq r_1+s^2+s\) and \(j_i\neq k(s+1)+2s\) for \(k\neq s, k\geq 0,\) when \(r_1=\min\{j_i\,|\, \alpha_{j_i}\neq0\}\) and \(r_1\neq s(s+1)+2s.\)

\pagestyle{myheadings}
\markright{{\footnotesize {\it  M. Gazor and M. Moazeni
\hspace{3.0in} {\it Bogdanov--Takens singularity }}}}

There are only a few research results dealing with parametric cases of different singularities; see \cite{GazorMokhtariInt,GazorMokhtariEul,GazorYuSpec,GazorYuFormal,Murd98,Murd09Mal,PeiYu02SingleZero,yl}. However, there is no research report in the literature dealing with parametric hypernormalization of the Bogdanov--Takens singularity in which parametric changes of state variable, parametric time rescaling and reparametrization are all efficiently used.

Baider and Sanders \cite{BaidSand91,baidersanders} considered the simplest normal form computation of Bogdanov--Takens singularity without parameters. Since then many theoretical and computational methods have been employed to solve the remaining unsolved cases or to develop computer programs for its practical applications; see \eg \cite{Algaba,baiderchurch,kw,pwang,Stroyzyna,wlhj,yy2001,yy2001b,Zoladek02,Zoladek03,chendora,ChenWang}. Baider and Sanders \cite{baidersanders} chose to smartly skip tedious derivation of most formulas. Here we have used results of \cite{baidersanders} while we have derived the involved formulas and extended them to include time rescaling cases. Formulas are substantially simplified and applied to develop our Maple program.

Recently, Str\'{o}\.{z}yna and \.{Z}oladek \cite{ZoladekNF2013} have claimed to conclusively solve the simplest normal forms of Bogdanov--Takens singularity without parameters. Str\'{o}\.{z}yna and \.{Z}oladek \cite{Stroyzyna,Zoladek02,Zoladek03} obtained complete formal orbital normal form classification for germs of {\em analytic complex} Bogdanov--Takens singularities through their powerful non-algebraic method. In addition, the convergence analysis of the classical normal forms are presented; see also \cite{StZolDiv}.

The rest of this paper is organized as follows. Section \ref{sec2} presents the necessary algebraic structures, notation and our theory of normal form computations. In Section \ref{sec3} we derive and simplify the necessary formulas for the computation of orbital normal forms. The simplest normal form obtained by Baider and Sanders are reproduced in Section \ref{sec4}. Under some technical conditions, Section \ref{sec5} deals with the parametric normal form of any non-degenerate perturbation of the system given by Equation (\ref{eq1}). Some practical formulas are included in Section \ref{sec6} that demonstrate the applicability of the results.

\section{Algebraic structures }\label{sec2}

The necessary algebraic structures are presented in this section. The same notation as of \cite{baidersanders} are used here, while they are extended to include time rescaling. Define
\ba\label{Alk}
A_{k}^{l}& :=&\frac{k-l+1}{k+2}x^{l+1}y^{k-l}\frac{\partial}{\partial
x}-\frac {l+1}{k+2}x^ly^{k-l+1}\frac{\partial}{\partial y} \qquad\qquad\; (-1\leq l\leq k+1),
\\\label{Blk}
B_{k}^{l}&:=& x^{l+1}y^{k-l}\frac{\partial}{\partial x}+x^ly^{k-l+1}\frac{\partial}{\partial y} \qquad\qquad\qquad\qquad\qquad\qquad\quad\; (0\leq l\leq k),
\\
Z^l_k&:=&x^ly^{k-l} \qquad\qquad\qquad\qquad\qquad\qquad\qquad\qquad\qquad\qquad\quad\;\; (0\leq l\leq k),
\ea and denote the ring of all formal power series in terms of \(x\) and \(y\) by \(\R:=F[[x, y]],\) where \(F\) denotes the field of real numbers \(\mathbb{R}\) in Sections \ref{sec3} and \ref{sec4}. Here, \(\R\) stands for the space of all time rescaling generators. We refer to either of \(A_{k}^{l}, B_{k}^{l}\) and \(Z_{k}^{l}\) as a \emph{term} in this paper. Hence,
\be\label{R}
\R:= \Big\{\sum c_{l,k}Z^l_k\,\big|\, {0\leq l\leq k}, c_{l,k}\in F\Big\}.\ee Further,
\be\label{L} \LST:= \Big\{a^1_0A^{1}_0+ \sum a^l_kA^l_k+\sum b^n_mB^n_m| a^l_k, b^n_m\in F\Big\},\ee where the first summation is over \(-1\leq l\leq k+1\) with \(k\geq 0,\) and the indices of the second summation are subjected to the conditions \(0\leq n\leq m\) and \(m\geq0.\) The space \(\LST\) represents the space of all vector fields of type Equation \eqref{eq1}.

\begin{rem} Equations \eqref{Alk} and \eqref{Blk} were derived via a \(\Sl\)-representation for planar vector fields where the triad \(M, N, H\) are given by \(M:= y\ddx,\) \(N:=-x\ddy, \) and \(H:= [M, N]\); see \cite{BaidSand91} for details. The vector field \(B^n_m\) is called an {\it Eulerian vector field} and does not have any nontrivial first integral. For any \(l\) and \(k,\) the vector field \(A_{k}^{l}\) is called a {\it Hamiltonian vector field} and its Hamiltonian is given by \(h^l_k:= \frac{1}{k+2}x^{l+1}y^{k-l+1}.\) In particular, the algebra of first integrals for \(A^{-1}_k\) is generated by \(y.\)
\end{rem}

\begin{lem}[Structure Constants]\label{StrCons} Following \cite{baidersanders}, the Lie algebraic structure of \(\LST\) is governed by
\bas {[A^l_k, A^n_m]}&=&(k+m+2)\Le(\frac{n+1}{m+2}-\frac{l+1}{k+2}\Ri)A^{l+n}_{k+m},
\\
{[A^l_k, B^n_m]}&=&\frac{m(m+2)}{(m+k+2)}\Le(\frac{n}{m}-\frac{l+1}{k+2}\Ri)
B^{n+l}_{k+m}-kA^{n+l}_{k+m},
\\
{[B^l_k, B^n_m]}&=&(m-k)B^{n+l}_{k+m}.
\eas
The space of all near-identity time rescaling transformations acts on the Lie algebra through a left \(\R\)-module structure (see \cite{GazorYu}) given by the following rules
\bas
Z^n_mA^l_k&=& A^{n+l}_{m+k}+ \frac{(k+2)n-m(l+1)}{(k+2)(k+m+2)}B^{n+l}_{m+k},
\\
Z^n_mB^l_k&=& B^{n+l}_{m+k},
\\
Z_m^nZ^l_k&=&Z^{n+l}_{m+k}.
\eas
\end{lem}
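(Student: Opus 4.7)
The lemma packages two families of identities: the three Lie brackets among the $A^l_k$ and $B^n_m$, attributed to \cite{baidersanders}, and the three rules describing the left $\R$-module action induced by time rescaling. The plan is to reduce every identity to a direct expansion in the monomial basis $\{x^ay^b\ddx,\, x^ay^b\ddy\}$ followed by a $2\times 2$ comparison of coefficients. Two observations will organize the work so that most identities reduce to a single small computation: first, each $A^l_k$ is the Hamiltonian vector field of $h^l_k:=\frac{1}{k+2}x^{l+1}y^{k-l+1}$ (as recorded in the remark); second, $B^l_k = Z^l_k E$, where $E:=x\ddx+y\ddy$ is the Euler vector field, which satisfies $E(Z^a_b)=bZ^a_b$. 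I would prove the module rules first and then use them to shorten the bracket computations.

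For the $\R$-module rules, $Z^n_m Z^l_k = Z^{n+l}_{m+k}$ is immediate from monomial multiplication, and $Z^n_m B^l_k = B^{n+l}_{m+k}$ follows from the factorization $B^l_k = Z^l_k E$. The only nontrivial identity is $Z^n_m A^l_k = A^{n+l}_{m+k} + \beta\, B^{n+l}_{m+k}$ with $\beta=\frac{(k+2)n-m(l+1)}{(k+2)(k+m+2)}$. I would multiply $A^l_k$ by $Z^n_m$ coordinatewise and solve the resulting $2\times 2$ linear system by matching the coefficients of $x^{l+n+1}y^{k+m-l-n}\ddx$ and $x^{l+n}y^{k+m-l-n+1}\ddy$ against those appearing in $A^{n+l}_{m+k}$ and $B^{n+l}_{m+k}$. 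Since the latter two vector fields have distinct component ratios, the system has a unique solution, which a one-line algebra step reveals to be the stated $\beta$.

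For the Lie brackets, $[A^l_k,A^n_m]$ follows from a Poisson-bracket computation: inspecting exponents shows that $\{h^l_k,h^n_m\}$ is a scalar multiple of $h^{l+n}_{k+m}$, and the map $h\mapsto X_h$ intertwines the Poisson bracket with the Lie bracket up to a fixed sign, which produces the claimed constant $(k+m+2)\bigl(\frac{n+1}{m+2}-\frac{l+1}{k+2}\bigr)$. For $[A^l_k,B^n_m]$, I would write $B^n_m=Z^n_m E$ and use the Leibniz rule $[U,fV]=U(f)V+f[U,V]$ to obtain $[A^l_k,B^n_m]=A^l_k(Z^n_m)\,E + Z^n_m[A^l_k,E]$; since both components of $A^l_k$ are homogeneous of degree $k+1$, the general identity $[E,f\ddx]=(\deg f - 1)f\ddx$ yields $[A^l_k,E]=-kA^l_k$. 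What remains is to compute the monomial $A^l_k(Z^n_m)=\frac{n(k+2)-(l+1)m}{k+2}Z^{l+n}_{k+m}$ and to re-expand $Z^n_m A^l_k$ via the module rule already proved. For $[B^l_k,B^n_m]$, the same Leibniz-plus-Euler argument collapses the bracket to $\bigl(E(Z^n_m)Z^l_k-E(Z^l_k)Z^n_m\bigr)E=(m-k)Z^{l+n}_{k+m}E=(m-k)B^{l+n}_{k+m}$.

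The main obstacle is not conceptual but purely bookkeeping: tracking the normalizations $1/(k+2)$ and $1/(k+m+2)$ through each Leibniz expansion, and checking that the $A$-component of $[A^l_k,B^n_m]$ really comes out as $-kA^{l+n}_{k+m}$ while the $B$-coefficient simplifies via the common-factor identity
\[
(n(k+2)-m(l+1))(k+m+2)-k(n(k+2)-m(l+1)) = (n(k+2)-m(l+1))(m+2).
\]
Dividing by $(k+2)(k+m+2)$ then yields exactly $\frac{m(m+2)}{k+m+2}\bigl(\frac{n}{m}-\frac{l+1}{k+2}\bigr)$, matching the statement. This single factorization is the only algebraic step that does not fall out mechanically from the definitions.
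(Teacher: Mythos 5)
Your proposal is correct, and there is in fact nothing in the paper to measure it against: the paper states this lemma by citation only, attributing the brackets to \cite{baidersanders} and the \(\R\)-module rules to \cite{GazorYu}, and gives no proof. Your verification is therefore a genuine supplement rather than a rederivation, and it follows precisely the structural route those references suggest, since the \(A^l_k\) are Hamiltonian with \(h^l_k=\frac{1}{k+2}x^{l+1}y^{k-l+1}\) (as the paper's Remark 2.1 records) and the \(B^l_k\) factor as \(Z^l_k E\) through the Euler field. I checked your computational pivots and they all hold: solving the \(2\times2\) system for \(Z^n_mA^l_k\) forces the \(A\)-coefficient to equal \(1\) (the two component equations are consistent) and yields \(\beta=\frac{n(k+2)-m(l+1)}{(k+2)(k+m+2)}\); the derivative \(A^l_k(Z^n_m)=\frac{n(k+2)-m(l+1)}{k+2}Z^{l+n}_{k+m}\) is right; \([A^l_k,E]=-kA^l_k\) follows from homogeneity of degree \(k+1\); and your factorization \((k+m+2)-k=m+2\) reproduces the stated \(B\)-coefficient of \([A^l_k,B^n_m]\), whose \(n/m\) is only an apparent singularity since \(m=0\) forces \(n=0\). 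Two small points deserve a line each in a final write-up. First, pin down the sign you wave at: with \(X_h=h_y\partial_x-h_x\partial_y\) and \(\{f,g\}=f_xg_y-f_yg_x\) one has \([X_f,X_g]=-X_{\{f,g\}}\), and since \(\{h^l_k,h^n_m\}=(k+m+2)\bigl(\frac{l+1}{k+2}-\frac{n+1}{m+2}\bigr)h^{l+n}_{k+m}\), it is exactly this minus sign that produces the stated constant. Second, at the boundary indices the module rule stays inside the allowed ranges for a clean reason worth recording: for \(l=-1\) one gets \(\beta=\frac{n}{k+m+2}\), which vanishes precisely when \(n=0\), i.e.\ precisely when \(B^{n-1}_{m+k}\) would be out of range, and for \(l=k+1\) one gets \(\beta=\frac{n-m}{k+m+2}\), which vanishes precisely when \(n=m\), again exactly when \(B^{n+k+1}_{m+k}\) would leave the range \(0\le n+l\le m+k\); so no out-of-range terms ever appear.
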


The classical normal forms of Bogdanov--Takens singularity in their
Hamiltonian--Eulerian decomposition are presented in the following lemma; see \cite{baidersanders}.
\begin{lem}\label{cl}
For any vector field \(v\) given by (\ref{eq1}), there exist invertible changes of state variables transforming \(v\) into the first level normal form
\be\label{clsAB}
v^{(1)}= A^1_0+\sum^\infty_{k=r_1} a_kA^{-1}_k+ \sum^\infty_{k=s} b_kB^{0}_k.
\ee In particular, nonlinear part of \(v^{(1)}\) commutes with \(M=A^{-1}_0,\) \ie \(v^{(1)}-A^1_0\in \ker \ad_M.\)
\end{lem}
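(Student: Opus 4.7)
The plan is to apply the classical $\Sl$-equivariant Poincar\'{e}--Dulac normal form method (as used in \cite{BaidSand91}). First, note that the linear part $N := A^1_0 = -x\ddy$ together with $M := A^{-1}_0 = y\ddx$ and $H := [M, N]$ forms an $\Sl$-triple: direct substitution in the bracket rules of Lemma \ref{StrCons} yields $H = 2A_0^0$, $[H, N] = 2N$, and $[H, M] = -2M$. The adjoint action of this triple preserves the grading $\LST = \bigoplus_{n \geq 1}\LST_n$ by total polynomial degree of the coefficient functions, and each $\LST_n$ is finite-dimensional. Weyl's complete reducibility theorem then yields, on each homogeneous component, the $\Sl$-isotypic decomposition
\be\label{sl2decomp}
\LST_n = \ker\bigl(\ad_M|_{\LST_n}\bigr) \oplus \IM\bigl(\ad_N|_{\LST_n}\bigr),
\ee
in which the first summand is a canonical complement to the image of the homological operator $\ad_N$.

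Next, I would identify $\ker \ad_M$ explicitly. Specialising the first two bracket rules of Lemma \ref{StrCons} with $(k,l) = (0, -1)$ produces
\bes
[M, A^l_k] = (l+1)\, A^{l-1}_k, \qquad [M, B^n_m] = n\, B^{n-1}_m,
\ees
so that $\ker \ad_M$ is spanned precisely by $\{A^{-1}_k\}_{k \geq 0}$ together with $\{B^0_m\}_{m \geq 0}$. Removing the linear generator $M = A^{-1}_0$, the remaining basis vectors are exactly the nonlinear terms appearing in \eqref{clsAB}.

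Finally, I would run the standard degree-by-degree reduction. Assume inductively that $v$ has already been brought into a form in which all homogeneous components of polynomial degrees $2, \ldots, n-1$ lie in $\ker \ad_M$. Using \eqref{sl2decomp}, write the degree-$n$ component of $v$ as $w_n = u_n + [N, Y_n]$ with $u_n \in \ker \ad_M \cap \LST_n$ and $Y_n \in \LST_n$. The time-one map generated by $Y_n$ is an invertible near-identity change of state variables that replaces the degree-$n$ part of $v$ by $u_n$ while leaving the lower-degree terms untouched. Composing these transformations formally across all $n$ produces a vector field of the form \eqref{clsAB}, and the identity $v^{(1)} - A^1_0 \in \ker \ad_M$ is then immediate from the construction.

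The main technical obstacle is the justification of \eqref{sl2decomp}, which reduces to checking the $\Sl$-bracket relations above and confirming the finite-dimensionality and $\ad$-invariance of each $\LST_n$; both follow at once from the structure constants of Lemma \ref{StrCons}, after which the remainder of the argument is routine bookkeeping in the $\Sl$-module $\LST$.
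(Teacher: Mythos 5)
Your proof is correct, and it is essentially the argument the paper relies on: the paper offers no proof of Lemma \ref{cl} beyond citing Baider--Sanders, and its own Remark on the triad \(M, N, H\) makes clear that the intended mechanism is exactly the \(\Sl\)-style normalization you carry out (complete reducibility giving \(\LST_n=\ker\ad_M\oplus\IM\ad_N\) on each polynomial-degree-homogeneous component, the explicit identification \(\ker\ad_M=\Span\{A^{-1}_k, B^0_m\}\) from \([M,A^l_k]=(l+1)A^{l-1}_k\) and \([M,B^n_m]=nB^{n-1}_m\), and the degree-by-degree Lie transform). The only cosmetic caveat is that your grading by polynomial degree differs from the paper's \(\delta\)-grading \(\delta(A^l_k)=k+ls\), which is the right choice here since \(\ad_N=\ad_{A^1_0}\) shifts \(\delta\) by \(s\) but preserves polynomial degree.
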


The \(\Sl\)-style normal form is used in Lemma \ref{cl} and we extend it to a simplest normal form style via notion of formal basis style, that is, we give priority to elimination of \(B^0_k\)-terms over \(A^{-1}_k\)-terms when such choices are needed; see \cite{GazorYuSpec,GazorYuFormal,Murd09Mal}. This style provides a unique complement space (or a unique projection) for \({\rm im }\, d^{n, N} \) within the space of \(A^{-1}_p\) and \(B^0_q\) terms.

\begin{defn} We extend the grading function introduced in \cite{baidersanders} to include parameters by
\be
\delta\Le(A^l_k\mu^\m\Ri)=\delta\Le(B^l_k\mu^\m\Ri):= k+ls+\Le(r_1+2\Ri)|\m|, \quad |\m|:= \sum^p_{i=1} m_i, \hbox{ for } \m:= (m_1, m_2, \ldots, m_p),
\ee  where \(\mu^\m:=\prod^p_{i=1}{\mu_i}^{m_i}\). This grading is generalized to include time rescaling terms by
\be
\delta\Le(Z^l_k\mu^\m\Ri):= k+ls+(r_1+2)|\m|.
\ee Hence, \(\LST=\sum\LST_i\) is a graded Lie algebra and \(\R=\sum\R_i\) is a graded ring. Furthermore, \(\LST\) is a left \(\R\)-graded module; see \cite{GazorYuSpec} for more details.
\end{defn}
Let \(\alpha:= {\rm sign}(b_s)\) and \(\beta:=\frac{1}{\sqrt[s+1]{b_s\alpha^s}}.\) Thus, by the time rescaling \(t:=\beta \tau\) and state changes of variables \((x,y):=(\alpha X,\beta Y),\) Equation (\ref{clsAB}) is transformed into
\bes
\tilde{v}^{(1)}= A^1_0+ B^0_s+\sum^\infty_{k=s} \frac{\beta^{k+2} a_k}{{\rm sign}(b_s)}A^{-1}_k+ \sum^\infty_{k=r_1} \beta^{k+1} b_kB^{0}_k.
\ees Hence, without loss of generality we assume that \(b_s=1\) in Equation (\ref{clsAB}). Denote \be\mathbb{B}_s:=A^1_0+ B_s^0\in \LST_s.\ee
\begin{notation}[Pochhammer \(k\)-symbol]
The Pochhammer \(k\)-symbol notation is defined by
\bes(a)^n_{k}= a (a+k)(a+2k)\cdots \big(a+(n-1)k\big),\ees for any natural number \(n\) and real number \(k.\)
\end{notation}
Now we present our theory of hypernormalization which we shall use in the next three sections. Let \(v= \sum^\infty_{i=s} v_i, v_i\in \LST_i,\) and let \(A:= \sum^\infty_{i=0} A_i\) denote a grading for the space of all transformation generators. The space \(A\) shall be defined in each of the following three sections. The space \(A\) acts on \(\LST\) and we denote this action by \(*.\) Define
\bas
d^{n,s+1}: A_{n-s} &\rightarrow& \LST_n,
\\
d^{n,s+1}(Y_{n-s})&:=& Y_{n-s}*v_s.
\eas Here, \(v_s:= \mathbb{B}_s.\) Assume that a normal form style is fixed; a {\it normal form style} refers to a rule on how to uniquely assign a complement space to any given vector subspace of \(\LST\). Then, there exist spaces of \(\RC^{n, s+1}:= \IM d^{n,s+1}\) and \(\C^{n, s+1}\) such that \(\LST_n:= \RC^{n, s+1}\oplus \C^{n, s+1}.\) Therefore, \(v\) can be transformed into the \((s+1)\)-th level (orbital or parametric; depending on the transformation space \(A\)) normal form \(v^{(s+1)}= \sum^\infty_{k=s}v_k,\) where \(v_n\in \C^{n, s+1}\) for all \(n>s.\) Then, for any \(n\) and \(N\) such that \(n\geq N\geq s+1,\) we inductively define
\ba\nonumber
d^{n,N}: \ker d^{{n-s}-1, N-1}\times A_{n-s}&\rightarrow& \LST_{n},
\\\label{LnN}
d^{n,N}\big(g_{n-N+1}, \ldots, g_{n-s}\big)&:=&\sum^{N-1}_{k=s} g_{n-k}*v_{k}.
\ea
Here, \(d^{n, N}\) projects \bes \big(g_{n-N+1}, \ldots, g_{n-s}\big)*\Le(\sum^{N-1}_{j=s} v_j\Ri):=\Le(\sum^{n-s}_{i=n-N+1} g_i\Ri)*\Le(\sum^{N-1}_{j=s} v_j\Ri) \ees into \(\LST_n.\) For any \(n\) with \(s+1\leq n< N,\) let \(d^{n,N}:= d^{n,N-1}.\) Denote
\bes
\RC^{n, N}:= \IM d^{n,N} \quad \hbox{ and }\quad \LST_n:= \RC^{n, N}\oplus \C^{n, N},
\ees where \(\C^{n, N}\) is the unique complement space of \(\RC^{n, N}\) in \(\LST_n.\) A vector field \(v=\sum v_k\) is called an {\it \(N\)-th level normal form} when \(v_k\in \C^{k,N}\) for any \(k\in \N,\) or an {\it infinite level normal form} when \(v_k\in \C^{k,k}\) for any \(k\in \N.\) Then, by \cite[Theorems 4.3--4.4]{GazorYuSpec} the following theorem holds.
\begin{thm} \label{NthONF}
For any \(v\in \LST,\) there exist near-identity changes of variables such that they transform \(v\) into its \(N\)-th level \(v^{(N)}\) or infinite level \(v^{(\infty)}\) (parametric or orbital) normal form.
\end{thm}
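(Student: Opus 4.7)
The plan is to construct the normalizing transformation inductively on the level $N$, level by level and grade by grade. At the first nontrivial level $N = s+1$, I would process $v = \sum_{i\geq s} v_i$ in increasing order of grade: for each $n \geq s+1$, pick $Y_{n-s} \in A_{n-s}$ so that $d^{n,s+1}(Y_{n-s}) = Y_{n-s} * v_s$ equals the projection of $v_n$ onto $\RC^{n,s+1}$ along $\C^{n,s+1}$, and apply the time-one map of the near-identity transformation generated by $Y_{n-s}$. Because $Y_{n-s}$ has grade $n-s \geq 1$ while the leading part $v_s = \mathbb{B}_s$ has grade $s$, this adjusts $v_n$ exactly by $-d^{n,s+1}(Y_{n-s})$ modulo terms of grade strictly greater than $n$. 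Hence successive applications preserve already-normalized grades, and the formal infinite composition is well defined in the Krull topology on $\LST$, producing $v^{(s+1)}$ with $v_n \in \C^{n,s+1}$ for every $n > s$.

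For the inductive step, assume the level-$(N-1)$ normal form $v^{(N-1)} = \sum_k v_k$ with $v_k \in \C^{k,N-1}$ has been achieved, and fix a grade $n \geq N$. The only freedom that does not disturb the already-normalized components $v_{s+1}, \ldots, v_{n-1}$ is to pick a tuple $(g_{n-N+1}, \ldots, g_{n-s})$ in $\ker d^{n-s-1, N-1} \times A_{n-s}$; this kernel constraint in \eqref{LnN} is exactly what guarantees that the cumulative action $\sum_{k=s}^{N-1} g_{n-k} * v_k$ lands cleanly in $\LST_n$ without producing residual contributions in the lower-grade $\C^{k,N-1}$ slots. One then kills the $\RC^{n,N}$-component of $v_n$ by solving the homological equation $d^{n,N}(g_{n-N+1}, \ldots, g_{n-s}) = \pi_{\RC^{n,N}}(v_n)$, leaving $v_n \in \C^{n,N}$ as dictated by the chosen normal form style. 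Iterating over $n \geq N$ completes the passage from level $N-1$ to level $N$, and nesting these constructions over all $N$ (using that the level-$N$ transformation acts only on grades $\geq N$) yields the infinite-level normal form $v^{(\infty)}$ with $v_k \in \C^{k,k}$ by a standard diagonal argument.

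The main obstacle in executing this programme is verifying surjectivity of the constrained $d^{n,N}$ onto $\RC^{n,N} = \IM d^{n,N}$: one needs to check that $\ker d^{n-s-1, N-1}$ supplies enough transformation generators to hit every element of the image, and that the iterative solution of the homological equations does not leak residual content into the complement spaces fixed at earlier levels. This rests on the graded module structure of $\LST$ over $\R$ supplied by Lemma \ref{StrCons}, together with the compatibility of near-identity compositions with the grading, and it is precisely the content of Theorems 4.3--4.4 of \cite{GazorYuSpec}. Invoking those results in the Bogdanov--Takens setting, with the space $A$ taken as the state-change generators in the orbital case and its extension by time rescaling and reparametrization generators (as will be set up explicitly in Sections \ref{sec3}--\ref{sec5}), gives both versions of the statement at once.
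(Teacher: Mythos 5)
Your proposal is correct and follows essentially the same route as the paper, which proves this theorem simply by invoking Theorems 4.3--4.4 of \cite{GazorYuSpec} after setting up the graded maps $d^{n,N}$; your added sketch of the grade-by-grade induction is a faithful expansion of what those cited results encapsulate. No discrepancy to report.
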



The following lemma highlights the basic idea used in many formulas in this paper.

\begin{lem}\label{23}
Let \(v^{(N-1)}=\sum^\infty_{i=s}v_i\) (\(v_i\in \LST_i\)) be the \((N-1)\)-th level (parametric or orbital) normal form of \(v.\) Suppose that for any \(g\in \ker d^{n-1, N-1}\times A_{n-s},\) there exists a \(Y_{n-s}\in A_{n-s}\) such that
\bes d^{n, N}\left(g*v^{(N-1)}+[v_s, Y_{n-s}]\right)\in \Span\Le\{A^{-1}_p, B^0_q\,|\, p, q\in \mathbb{N}\Ri\}\cap \LST_n.\ees
Then,
\bes
\RC^{n, N}= \RC^{n, N-1}+ \Span \Le\{d^{n, N}\left(g*v^{(N-1)}+[v_s, Y_{n-s}]\right)\,\big|\, g\in \ker d^{n-1, N-1}\times A_{n-s}\Ri\}.
\ees
\end{lem}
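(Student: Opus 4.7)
The plan is to establish the stated identity by proving two inclusions, both resting on a single interpretive observation. The notation $d^{n,N}\Le(g*v^{(N-1)}+[v_s,Y_{n-s}]\Ri)$ stands for the degree-$n$ projection of $g*v^{(N-1)}$ augmented by the bracket term $[v_s,Y_{n-s}]\in\LST_n$. Since the action of $A_{n-s}$ on $v_s$ satisfies $Y_{n-s}*v_s=\pm[v_s,Y_{n-s}]$ up to the sign convention of Lemma \ref{StrCons}, adding $[v_s,Y_{n-s}]$ is equivalent to shifting the last component of the tuple $g=(g_{n-N+1},\ldots,g_{n-s})$ by $\mp Y_{n-s}$. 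Consequently each generator of the claimed span on the right-hand side coincides with $d^{n,N}(\tilde g)$ for an admissible tuple $\tilde g$, and thus lies in $\RC^{n,N}$.

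For the containment $\RC^{n,N-1}\subseteq\RC^{n,N}$ I would pad by zero: given $(g_{n-N+2},\ldots,g_{n-s})$ in the domain of $d^{n,N-1}$, the extended tuple $(0,g_{n-N+2},\ldots,g_{n-s})$ lies in $\ker d^{n-1,N-1}\times A_{n-s}$, because in \eqref{LnN} the new $v_{N-2}$-contribution vanishes and the remaining sum collapses to the kernel condition already satisfied. The images under $d^{n,N-1}$ and $d^{n,N}$ then coincide, establishing the inclusion. Combined with the previous paragraph, this yields the easy direction $\RC^{n,N-1}+\Span\{\cdots\}\subseteq\RC^{n,N}$.

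For the reverse inclusion, take $h\in\RC^{n,N}$ and write $h=d^{n,N}(g)$ for some $g\in\ker d^{n-1,N-1}\times A_{n-s}$. The hypothesis produces $Y_{n-s}\in A_{n-s}$ such that $\varphi:=h+[v_s,Y_{n-s}]$ lies in $\Span\{A^{-1}_p,B^0_q\}\cap\LST_n$, so $\varphi$ is one of the generators of the spanning set on the right-hand side. On the other hand, $-[v_s,Y_{n-s}]=\pm Y_{n-s}*v_s=\pm d^{n,N-1}(0,\ldots,0,Y_{n-s})$ via the zero-padded tuple, whose older part trivially belongs to $\ker d^{n-1,N-2}$; hence $[v_s,Y_{n-s}]\in\RC^{n,N-1}$. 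The decomposition $h=\varphi-[v_s,Y_{n-s}]$ then realizes $h$ as an element of $\Span\{\cdots\}+\RC^{n,N-1}$, completing the proof.

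The main obstacle is purely notational: one must fix the sign in the action convention of Lemma \ref{StrCons} so that the shift of the last component of $g$ by $Y_{n-s}$ and the compensating term $-[v_s,Y_{n-s}]$ cancel exactly, and one must verify that zero padding respects the nested kernel constraints built into the recursive definition \eqref{LnN}. Both points are resolved by unwinding the definitions; no genuinely new computation is required beyond the corrective generator $Y_{n-s}$ already supplied by the hypothesis.
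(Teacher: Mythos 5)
Your argument is correct: the paper states Lemma \ref{23} without any proof, and what you have written is precisely the definitional unwinding the authors evidently intend --- the whole content being that $[v_s,Y_{n-s}]=Y_{n-s}*v_s\in\RC^{n,s+1}\subseteq\RC^{n,N-1}$, so that shifting the last component of $g$ by $Y_{n-s}$ neither leaves the domain of $d^{n,N}$ nor changes the sum $\RC^{n,N-1}+\Span\{\cdots\}$. Your two ancillary observations (zero-padding respects the nested kernel conditions, and the sign ambiguity in the $*$-action is immaterial for a statement about spans) are exactly the right points to check, so nothing is missing.
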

The space \(\RC^{n, N}\) is sometimes called the {\it \(N\)th level removable space} of grade \(n.\)

The following two lemmas play a central role in development of our Maple program; compare them with  \cite[Propositions 6.1.2 and 6.1.5]{BaidSand91}.

\begin{lem}\label{AReduce} For any nonnegative integers \(m\) and \(n,\) there exists a \(\delta\)-homogenous polynomial vector field \(\mathfrak{A}^n_m\in\LST\) such that
\bas
A^n_m+ {[\mathbb{B}_s, \mathfrak{A}^n_m]}&=&\Le(\frac{(s+2)(m+1+ns)(m)^{n-1}_{s}}{(m+2-n)^{n+1}_{s+1}}
- \frac{(s+2)(m)^{n-1}_{s}}{(m+2)(m-n+1)^{n-1}_{s+1}}
\Ri) B_{m+ns}^0
\\&&+ \frac{(m)^{n+1}_{s}}{(m-n+2)^{n+1}_{s+1}} A_{m+ns+s}^{-1}.
\eas
\end{lem}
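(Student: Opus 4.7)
The plan is to prove this by induction on $n \geq 0$, exploiting the structure constants of Lemma~\ref{StrCons} with $l=1$, $k=0$. The idea is to peel off one factor of $A^1_0$ at a time from $A^n_m$, turning each step into a commutator with $\mathbb{B}_s = A^1_0 + B^0_s$ and carefully tracking the Eulerian correction coming from $B^0_s$. In parallel, I would apply the same strategy to the $B$-terms, so that Lemma~\ref{AReduce} and its companion $B$-reduction lemma (stated next in the paper) are proved by a joint induction on $n$.

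Specifically, Lemma~\ref{StrCons} yields
\bas
[A^1_0, A^{n-1}_m] &=& (n-m-2)\,A^n_m,\\
[B^0_s, A^{n-1}_m] &=& \frac{s(s+2)\,n}{(s+m+2)(m+2)}\,B^{n-1}_{m+s} + m\,A^{n-1}_{m+s}
\eas
(the second obtained by negating the formula for $[A^{n-1}_m, B^0_s]$), which rearrange into the recursion
\bes
A^n_m = \frac{1}{n-m-2}[\mathbb{B}_s,\,A^{n-1}_m] - \frac{1}{n-m-2}\Le(\frac{s(s+2)\,n}{(s+m+2)(m+2)}\,B^{n-1}_{m+s} + m\,A^{n-1}_{m+s}\Ri).
\ees
The base case $n=0$ is immediate: the factor $n$ kills the $B^{-1}_{m+s}$ contribution, and $[A^1_0, A^{-1}_m] = -(m+2)A^0_m$, $[B^0_s, A^{-1}_m] = m\,A^{-1}_{m+s}$ give $A^0_m + [\mathbb{B}_s,\,\tfrac{1}{m+2}A^{-1}_m] = \tfrac{m}{m+2}A^{-1}_{m+s}$, matching the claim once one adopts the convention that the $(m)^{n-1}_{s}$ prefactor annihilates the $B^0_m$ coefficient at $n=0$.

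For the inductive step I would substitute the inductive hypothesis for $A^{n-1}_{m+s}$ and the joint-induction $B$-reduction of $B^{n-1}_{m+s}$ into the recursion above, and then collect every commutator on the right into the generator $\mathfrak{A}^n_m$. The $\delta$-homogeneity of $\mathfrak{A}^n_m$ at grade $m+(n-1)s$ is automatic because every term produced by the recursion is homogeneous of that grade.

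The hard part will be verifying that the three contributions to each of the two surviving coefficients --- the inductive reduction of $A^{n-1}_{m+s}$, the Eulerian byproduct $m\,A^{n-1}_{m+s}/(n-m-2)$, and the reduction of $B^{n-1}_{m+s}$ --- telescope into the stated closed forms in Pochhammer $k$-symbols. This reduces to a shifted-factorial identity whose verification is routine but lengthy. A secondary subtlety is the resonant case $n = m+2$, where $\tfrac{1}{n-m-2}$ is singular; here I would start the reduction from $[B^0_s,\,A^{n-2}_{m-s}]$ instead, producing $A^{n-2}_m$ as the first step, and then confirm the stated coefficients by continuity in the parameter $m$.
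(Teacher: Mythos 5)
Your recursive strategy is sound in outline and is, in effect, an unrolled version of what the paper actually does: the paper's proof simply writes down the explicit generator \(\mathfrak{A}^n_m\) of Equation \eqref{Atrans} as a double sum over the staircase of terms \(A^{n-1-l}_{m+ls}\) and \(B^{n-1-q-l}_{m+qs+ls}\) and verifies the bracket by telescoping; your recursion, once unrolled, reproduces exactly that generator. Your structure constants are correct, the base case is right (modulo the convention that \((m)^{-1}_s\) kills the \(B\)-coefficient at \(n=0\)), and the \(A^{-1}\)-coefficient does satisfy your recursion: \(\frac{(m)^{n+1}_s}{(m-n+2)^{n+1}_{s+1}}=\frac{m}{m+2-n}\cdot\frac{(m+s)^{n}_s}{(m+s-n+3)^{n}_{s+1}}\) is a one-line identity. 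On the other hand, your worry about the resonant case \(n=m+2\) is vacuous, since \(A^n_m\) is only defined for \(n\le m+1\), so \(n-m-2\le-1\) never vanishes; and your proposed fix would not work anyway, because \([B^0_s,A^{n-2}_{m-s}]\) produces \(A^{n-2}_m\), not \(A^n_m\).

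The genuine gap is the step you defer as ``routine but lengthy.'' That verification is the entire content of the lemma, and for the \(B^0_{m+ns}\)-coefficient it does not close against the formula as printed. Concretely, writing \(C(n,m)\) for the stated \(B^0_{m+ns}\)-coefficient, your recursion forces \(C(n,m)=\frac{m}{m+2-n}C(n-1,m+s)+\frac{s(s+2)n}{(m+2-n)(s+m+2)(m+2)}\cdot\frac{(m)^{n-1}_s}{(m+s-n+2)^{n-1}_{s+1}}\); at \((n,m,s)=(2,3,1)\) the right-hand side is \(\frac{1}{70}+\frac{1}{20}=\frac{9}{140}\), whereas the printed closed form evaluates to \(\frac{54}{105}-\frac{9}{10}=-\frac{27}{70}\). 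A direct bracket computation with the paper's own \(\mathfrak{A}^2_3=\frac{1}{60}B^0_4+\frac{1}{3}A^1_3+\frac{1}{5}A^0_4+\frac{4}{35}A^{-1}_5\) also yields \(\frac{9}{140}B^0_5+\frac{4}{7}A^{-1}_6\), agreeing with your recursion; the two are reconciled if the second denominator in the statement is read as \((m-n+2+s)^{n-1}_{s+1}\) rather than \((m-n+1)^{n-1}_{s+1}\), which is the expression that actually appears in the paper's telescoping computation. So you cannot wave at this step: you must either carry the induction on the \(B\)-coefficient to completion and state the closed form it actually produces, or identify and justify the corrected form of the statement. Until then the proof is incomplete at precisely the point where all the work lies.
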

\bpr
By Lemma \ref{StrCons} we have
\ba\nonumber
\mathfrak{A}^n_m&:=& \sum_{q=1}^{n-1}\frac{s(s+2)(n+1-q)(m)^{q-1}_{s}}{(m+(q-1)s+2)^{2}_{s}(m-n+2)^{q}_{s+1}}
\sum_{l=0}^{n-1-q} \frac{\big(m+(q-1)s\big)^{l}_{s}}{\big(m-n+1+q(s+1)\big)^{l+1}_{s+1}}
B_{m+qs+ls}^{n-1-q-l}
\\\label{Atrans}&&+\sum_{l=0}^{n} \frac{(m)^{l}_{s}}{(m-n+2)^{l+1}_{s+1}}A_{m+ls}^{n-1-l}
\ea while
\bas
&&\sum_{q=1}^{n-1}\frac{s(s+2)(n+1-q)\big(m-n+2+s\big)^{q-1}_{s+1}}{
(m+(q-1)s+2)^{2}_{s}(m-n+2)^{q}_{s+1}}
\sum_{l=0}^{n-1-q} \frac{(m)^{l+q-1}_{s}}{\big(m-n+2+s\big)^{l+q}_{s+1}}
B_{m+qs+ls}^{n-1-q-l}
\\&=&\sum_{q=1}^{n-1}\frac{s(s+2)(n+1-q)\big(m-n+2+s\big)^{q-1}_{s+1}}{
(m+(q-1)s+2)^{2}_{s}(m-n+2)^{q}_{s+1}}
\sum_{l=q}^{n-1} \frac{(m)^{l-1}_{s}}{\big(m-n+2+s\big)^{l}_{s+1}}
B_{m+ls}^{n-1-l}
\\&=&\sum_{l=1}^{n-1} \frac{(m)^{l-1}_{s}}{\big(m-n+2+s\big)^{l}_{s+1}}
\sum_{q=1}^{l}\frac{s(s+2)(n+1-q)\big(m-n+2+s\big)^{q-1}_{s+1}}{
(m+(q-1)s+2)^{2}_{s}(m-n+2)^{q}_{s+1}}
B_{m+ls}^{n-1-l}
\\&=&
\sum _{l=1}^{n-1}\Le(\frac{(s+2)(m)^{l-1}_{s}}{(m+ls+2)(m-n+2)^{l}_{s+1}}
-\frac{(s+2)(m)^{l-1}_{s}}{(m+2)(m-n+2+s)^{l}_{s+1}}\Ri) B_{m+sl}^{n-1-l}.
\end{eqnarray*}
This completes the proof.
\end{proof}
\begin{lem} \label{BReduce} For natural numbers \(n\) and \(m,\) there exists a \(\mathfrak{B}^n_m\in \LST_{m+ns-s}\) such that
\bas
B^n_m+ {[\mathbb{B}_s, \mathfrak{B}^n_m]}&=& \frac{(m-s)^{n}_{s}}{(m-n+1)^{n}_{s+1}} B_{m+ns}^0.
\eas
\end{lem}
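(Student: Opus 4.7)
My plan is to mimic the approach that Lemma \ref{AReduce} uses: make a graded ansatz for $\mathfrak{B}^n_m$, compute $[\mathbb{B}_s,\mathfrak{B}^n_m]$ using Lemma \ref{StrCons}, and solve a one-step telescoping recursion on the coefficients. The key simplification here (compared with Lemma \ref{AReduce}) is that bracketing a pure Eulerian term $B^{\ell}_{k}$ with $\mathbb{B}_s=A^1_0+B^0_s$ produces \emph{only} Eulerian terms: indeed by Lemma \ref{StrCons} we have $[A^1_0,B^{\ell}_k]=(\ell-k)B^{\ell+1}_k$ (the Hamiltonian piece vanishes because the coefficient $k$ in the $-k\,A$-term is $0$), and $[B^0_s,B^{\ell}_k]=(k-s)B^{\ell}_{k+s}$. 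So once I restrict the ansatz to $B$-terms, no $A$-terms can ever appear and the linear system stays within a much smaller subspace.

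Since $\mathfrak{B}^n_m$ must be $\delta$-homogeneous of grade $m+ns-s$, I take
\[
\mathfrak{B}^n_m=\sum_{l=0}^{n-1} c_l\, B^{n-1-l}_{\,m+ls},
\]
which has the right grade since $(m+ls)+(n-1-l)s=m+ns-s$. Applying the two bracket identities above termwise,
\[
[\mathbb{B}_s,\mathfrak{B}^n_m]=\sum_{l=0}^{n-1} c_l\bigl(n-1-l-m-ls\bigr)B^{n-l}_{m+ls}+\sum_{l=0}^{n-1} c_l\bigl(m+ls-s\bigr)B^{n-1-l}_{m+(l+1)s}.
\]
Reindexing the second sum by $l\mapsto l-1$ and collecting coefficients of $B^{n-l}_{m+ls}$ for $l=0,1,\ldots,n$, the requirement $B^n_m+[\mathbb{B}_s,\mathfrak{B}^n_m]\in\Span\{B^0_{m+ns}\}$ becomes
\[
c_0(n-1-m)=-1,\qquad c_l\bigl(n-1-l-m-ls\bigr)+c_{l-1}\bigl(m+(l-2)s\bigr)=0\quad(1\le l\le n-1),
\]
which is the telescoping recursion
\[
c_0=\frac{1}{m-n+1},\qquad c_l=c_{l-1}\cdot\frac{m+(l-2)s}{m-n+1+l(s+1)}.
\]

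The only remaining step is to identify the coefficient of the surviving $B^0_{m+ns}$ term, which equals $c_{n-1}\bigl(m+(n-2)s\bigr)$. Iterating the recursion gives
\[
c_{n-1}\bigl(m+(n-2)s\bigr)=\frac{\prod_{l=0}^{n-1}\bigl(m+(l-1)s\bigr)}{\prod_{l=0}^{n-1}\bigl(m-n+1+l(s+1)\bigr)}=\frac{(m-s)(m)(m+s)\cdots(m+(n-2)s)}{(m-n+1)(m-n+s+2)\cdots(m-n+1+(n-1)(s+1))},
\]
which is precisely $(m-s)^n_s/(m-n+1)^n_{s+1}$ in the Pochhammer $k$-symbol notation. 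The main (minor) obstacle is bookkeeping: keeping the two index shifts ($l\mapsto l-1$ and the off-by-two in $(l-2)s$) straight, and recognizing the resulting product as a Pochhammer ratio. Everything else is forced by the ansatz and the structure constants.
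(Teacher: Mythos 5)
Your proof is correct and follows essentially the same route as the paper: the ansatz $\mathfrak{B}^n_m=\sum_{l=0}^{n-1}c_l B^{n-1-l}_{m+ls}$ with $c_l=\frac{(m-s)^l_s}{(m-n+1)^{l+1}_{s+1}}$ is exactly the paper's Equation \eqref{Btrans}, and your telescoping recursion is just the explicit derivation of those coefficients, which the paper states directly and verifies by observing that the bracket telescopes. The observation that $[\mathbb{B}_s,\cdot]$ preserves the span of Eulerian terms (since the $A$-component of $[A^1_0,B^\ell_k]$ carries the factor $k=0$) is the same structural fact the paper relies on implicitly.
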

\begin{proof} Define
\be\label{Btrans}
\mathfrak{B}^n_m:=\sum _{l=0}^{n-1} \frac{(m-s)^{l}_{s}}{(m-n+1)^{l+1}_{s+1}}
B_{m+ls}^{n-1-l},
\ee and observe that
\(
{[\mathbb{B}_s, \mathfrak{B}^n_m]}= \sum_{l=0}^{n-1}\left(\frac{(m-s)^{l+1}_{s}}{(m-n+1)^{l+1}_{s+1}}-
\frac{(m-s)^{l}_{s}}{(m-n+1)^{l}_{s+1}}\right)B_{m+ls}^{n-l}.
\)
\end{proof}

The above two lemmas imply that transformation generators \eqref{Atrans} and \eqref{Btrans} simplify all nonlinear terms except for \(A^{-1}_p\) and \(B^0_q\).

\section{The orbital normal forms }\label{sec3}

In this section we compute the orbital normal form of the generalized saddle-node case of the vector field (\ref{eq1}). Let \(A:= \LST\oplus\R.\) Hence, Equation (\ref{LnN}) is given by
\bas
d^{n, N}\big(S_{n-N+1}, \ldots, S_{n-s}, T_{n-N+1}, \ldots, T_{n-s}\big)&:=&\sum^{N-1}_{k=s} \big([v_{k},S_{n-k}]+ T_kv_{n-k}\big), \;\;\; S_i\in \LST_i, \; T_i\in \R_i,
\eas where the order of state terms and time terms are rearranged in this section for convenience.

We generalize a map \(\Gamma\) defined in \cite{baidersanders} to include the space \(\R\) in its domain, \ie
\bas
\Gamma: \LST\times \R \rightarrow \LST,
\eas is given by
\bas
\Gamma(S, Z^n_m)&:=&\Le[[\mathbb{B}_s, S]+ T\,\mathbb{B}_s, A^{-1}_0\Ri], \quad \hbox{ for any } T\in \R \hbox{  and }S\in \LST.
\eas The map \(\Gamma\) is a bilinear map. Baider and Sanders \cite{baidersanders} indicated that \(A^{k+1}_k\) and \(B^k_k\) (for any \(k\in \N\)) generate vector fields \(\mathcal{A}^{k+1}_k\) and \(\mathcal{B}^{k}_k\) such that \(\ker(\Gamma)\) is spanned by \(\mathcal{A}^{k+1}_k\) and \(\mathcal{B}^{k}_k.\) Further, \(Z^l_k\) (for \(l\leq k\)) also generate vector fields \(\mathcal{A}^{l}_k\) such that
\ba\label{kerG}\ker(\Gamma)=\Span \Big\{\big(\mathcal{A}^l_k, \Le(k-l+1\Ri)Z^l_k\big), (\mathcal{B}^{k}_k, 0)\,|\, l\leq k+1, l, k\in \N\cup\{0\}\Big\}.\ea
Here, our formulas uniformly treats the vector fields \(\mathcal{A}^l_k\) for \(l\leq k\) and \(l=k+1.\)
We derive the formulas for \(\mathcal{A}^l_k\) and \(\mathcal{B}^k_k\) in Equations (\ref{Acal}) and (\ref{Bcal}). Only these vector fields may contribute to further normalization of the classical normal forms; also see \cite[Proposition 5.1.1]{BaidSand91} and \cite{Moazeni}.

\begin{lem}\label{Znm}
For any nonnegative integers \(m, n,\) there exists a \(\delta\)-homogenous \(\mathcal{A}^n_m\) such that
\be
\left(\mathcal{A}^n_m, (m-n+1) Z^n_m\right)\in\ker(\Gamma)
\ee where
\begin{eqnarray}\nonumber
\mathcal{A}^n_m&=&
\sum_{l=0}^{n-1}
\frac{(s+2)(m)^{l}_{s}}{(m+(l+1)s+2)(m+2-n+s)^{l}_{s+1}}
B_{m+ls+s}^{n-l-1}-\frac{(m-n+1)}{m+2} B^{n}_{m}
\\\label{Acal}&&
+\sum_{l=-1}^{n} \frac{(m)^{l+1}_{s}}{(m+2-n+s)^{l+1}_{s+1}} A_{m+s+ls}^{n-l-1}.
\end{eqnarray}
\end{lem}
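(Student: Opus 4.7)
The plan is to verify directly that the explicit $\mathcal{A}^n_m$ of \eqref{Acal} satisfies the required kernel condition. By Lemma~\ref{cl}, a nonlinear vector field commutes with $A^{-1}_0=M$ if and only if it lies in $\Span\{A^{-1}_k, B^0_k\,|\, k\in \N\}$. So proving $(\mathcal{A}^n_m, (m-n+1)Z^n_m)\in\ker(\Gamma)$ reduces to showing that
\bes [\mathbb{B}_s, \mathcal{A}^n_m] + (m-n+1) Z^n_m \mathbb{B}_s \in \Span\{A^{-1}_k, B^0_k\,|\, k\in \N\}. \ees
Note also that every summand appearing in \eqref{Acal} has $\delta$-grading $m+ns$, so $\mathcal{A}^n_m$ is automatically $\delta$-homogeneous and only this single grade needs to be analyzed.

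First I would compute the time-rescaling contribution using the $\R$-module rules of Lemma~\ref{StrCons}, obtaining
\bes (m-n+1) Z^n_m \mathbb{B}_s = (m-n+1)\Le[A^{n+1}_m + \frac{n-m}{m+2} B^{n+1}_m + B^n_{m+s}\Ri]. \ees
The summand $B^n_{m+s}$ is already of acceptable type, so the $A^{n+1}_m$ and $B^{n+1}_m$ pieces are the ones that must be absorbed by the leading contributions of $[\mathbb{B}_s,\mathcal{A}^n_m]$. A quick sanity check confirms this: applied to the $l=-1$ summand $A^n_m$ of \eqref{Acal}, the bracket $[A^1_0,A^n_m]=-(m-n+1)A^{n+1}_m$ exactly cancels the first piece, and applied to the isolated term $-\frac{m-n+1}{m+2}B^n_m$, the bracket $[A^1_0,B^n_m]=(n-m)B^{n+1}_m$ exactly cancels the second piece.

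Next, I would expand $[\mathbb{B}_s,\mathcal{A}^n_m]=[A^1_0,\mathcal{A}^n_m]+[B^0_s,\mathcal{A}^n_m]$ term by term via the structure constants. The bracket $[A^1_0, A^{n-l-1}_{m+s+ls}]$ produces an $A$-term with upper index raised by one; $[A^1_0, B^{n-l-1}_{m+ls+s}]$ produces a $B$-term with upper index raised by one; $[B^0_s, A^{n-l-1}_{m+s+ls}]$ produces both a $B$-term and an $A$-term with lower index shifted by $s$; and $[B^0_s, B^{n-l-1}_{m+ls+s}]$ produces a $B$-term with the same upper index and lower index shifted by $s$. Collecting contributions of matching indices, one finds that the Pochhammer ratios in \eqref{Acal} are precisely calibrated so that, in each upper-index slot $p\geq 0$ (for $A$) or $p\geq 1$ (for $B$), the $A$-contribution from $[B^0_s,\cdot]$ cancels the $A$-contribution from $[A^1_0,\cdot]$ of the neighboring summand, and similarly for the $B$-contributions. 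What survives is a combination of $A^{-1}$- and $B^0$-terms, as required.

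The hard part will be the bookkeeping of the Pochhammer-$k$ coefficients and the verification of a telescoping identity of the same flavor as
\bes \sum_{q=1}^{l}\frac{s(s+2)(n+1-q)(m-n+2+s)^{q-1}_{s+1}}{(m+(q-1)s+2)^{2}_{s}(m-n+2)^{q}_{s+1}} = \frac{s+2}{(m+ls+2)(m-n+2)^{l}_{s+1}} - \frac{s+2}{(m+2)(m-n+2+s)^{l}_{s+1}}, \ees
exploited tacitly inside the proof of Lemma~\ref{AReduce}; the same partial-fraction/telescoping trick drives the cancellations here. A cleaner alternative, which I would fall back on if the direct manipulation becomes unwieldy, is to take \eqref{Acal} as an undetermined ansatz in $\Span\{A^{n-l-1}_{m+s+ls}, B^{n-l-1}_{m+ls+s}, B^n_m\}$, translate the cancellation requirements into a two-term linear recurrence in the unknown coefficients, solve the recurrence explicitly, and thereby recover \eqref{Acal}. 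This alternative has the side benefit of yielding the uniqueness needed for the generating-set description of $\ker(\Gamma)$ in \eqref{kerG}.
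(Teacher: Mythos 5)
Your strategy---reduce membership in \(\ker(\Gamma)\) to showing that \([\mathbb{B}_s,\mathcal{A}^n_m]+(m-n+1)Z^n_m\mathbb{B}_s\) lands in \(\Span\{A^{-1}_p,B^0_q\}\), then verify this by expanding with the structure constants and telescoping the Pochhammer coefficients---is essentially the computation the paper performs: its proof writes down an unsimplified four-sum expression \eqref{Acalnm} obtained ``by definition of \(\Gamma\) and the structure constants'' and then simplifies it to \eqref{Acal}, so you are running the same calculation in the verification direction rather than the derivation direction, and your two sanity checks (\([A^1_0,A^n_m]=-(m-n+1)A^{n+1}_m\) and \([A^1_0,B^n_m]=(n-m)B^{n+1}_m\)) are numerically correct. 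One slip must be fixed before the bookkeeping, though: the summand \((m-n+1)B^n_{m+s}\) produced by \(Z^n_mB^0_s\) is \emph{not} of acceptable type unless \(n=0\), since the target space contains only \(B\)-terms with upper index \(0\); for \(n\geq1\) it must be entered into the same cancellation ledger as \(A^{n+1}_m\) and \(B^{n+1}_m\). Indeed, in the paper's expression \eqref{Acalnm} the second and fourth sums are precisely the corrections that push this term down its \(B\)-string, and the end result, Lemma \ref{ZnmXs}, retains only \(A^{-1}_{m+2s+ns}\) and \(B^0_{m+ns+s}\); if you set \(B^n_{m+s}\) aside as ``done,'' your ledger will not balance and the verification will appear to leave a spurious residual in the upper-index-\(n\) slot. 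With that correction the plan goes through, and the general collection-by-upper-index scheme you describe in the following paragraph would in any case catch the term automatically.
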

\begin{proof} By definition of \(\Gamma\) and the structure constants we have
\begin{eqnarray}\nonumber
\mathcal{A}^n_m&:=&\sum_{l=-1}^{n}
\frac{(m)^{l+1}_{s}}{(m+2-n+s)^{l+1}_{s+1}} A_{m+s+ls}^{n-l-1}
-\sum_{l=0}^{n}\frac{(m-n+1)(m-s)^{l}_{s}}{(m+2)(m-n+s+1)^{l}_{s+1}}
B_{m+ls}^{n-l}
\\\nonumber&&
+\sum_{q=0}^{n-1} \sum_{l=0}^{n-1-q}\frac{s(s+2)(n+1-q)(m)^{q}_{s}\big(m+qs\big)^{l}_{s}}{(m+qs+2)^{2}_{s}
(m+2-n+s)^{q}_{s+1}\big(m-n+(q+1)(s+1)\big)^{l+1}_{s+1}} B_{m+s+qs+ls}^{n-1-q-l}
\\\label{Acalnm}&&+\sum_{l=0}^{n-1}\frac{(m-n+1)(m)^{l}_{s}}{(m-n+s+1)^{l+1}_{s+1}}B_{m+s+ls}^{n-l-1}.
\end{eqnarray} We conclude the proof by simplifying \eqref{Acalnm}.
\end{proof}

\begin{lem}\label{ZnmXs} For any \(m, n\) \((0\leq n\leq m+1)\) we have
\be\label{Righthand}
d^{m+ns+s,s+1}\big(\mathcal{A}^{n}_m, (m-n+1)Z^n_m\big)= a^{s}_{m,n}A_{m+2s+ns}^{-1}+ a^{s}_{m,n}c^{0, s}_{m,n+2}B^{0}_{m+ns+s},
\ee where
\be \label{arsmn}
a^{s}_{m,n}:=\frac{(m)^{n+2}_{s}}{(m+2-n+s)^{n+1}_{s+1}} \hbox { and } c^{0, s}_{m,n+2}:= \frac{(s+2)\big(m+(n+1)s+1\big)}{(m+ns)\big(m+(n+1)s\big)}.
\ee
\end{lem}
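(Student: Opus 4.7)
The plan is to leverage the kernel property of the pair $(\mathcal{A}^n_m,(m-n+1)Z^n_m)$ in order to reduce the identity to computing just two scalar coefficients. At level $N=s+1$ the operator $d^{n,N}$ of \eqref{LnN} collapses to
\[
d^{m+ns+s,s+1}\bigl(\mathcal{A}^n_m,(m-n+1)Z^n_m\bigr)=[\mathbb{B}_s,\mathcal{A}^n_m]+(m-n+1)\,Z^n_m\,\mathbb{B}_s.
\]
By Lemma \ref{Znm} together with the description \eqref{kerG} of $\ker\Gamma$, this element commutes with $M=A^{-1}_0$. A direct computation via Lemma \ref{StrCons} gives $[A^{-1}_0,A^p_q]=(p+1)A^{p-1}_q$ and $[A^{-1}_0,B^p_q]=p\,B^{p-1}_q$, so $\ker\ad_M$ in each graded piece is spanned by its $A^{-1}_*$- and $B^0_*$-type members. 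Since $\LST_{m+ns+s}$ contains exactly one of each, namely $A^{-1}_{m+2s+ns}$ and $B^0_{m+ns+s}$, the identity reduces to determining two coefficients $\alpha$ and $\beta$.

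For $\alpha$ I would track which brackets of Lemma \ref{StrCons} can yield an $A^{-1}$-type term. Since $[A^1_0,A^p_q]$ raises the upper index by one, $[A^1_0,B^p_q]$ and $[B^0_s,B^p_q]$ produce only $B$-terms, and the $A$-part of $[B^0_s,A^p_q]$ equals $q\,A^p_{q+s}$, an $A^{-1}$-output forces $p=-1$. Inspection of \eqref{Acal} shows that the unique matching summand of $\mathcal{A}^n_m$ is the $l=n$ piece $\frac{(m)^{n+1}_s}{(m+2-n+s)^{n+1}_{s+1}}A^{-1}_{m+s+ns}$; its bracket with $B^0_s$ contributes $(m+(n+1)s)\,A^{-1}_{m+2s+ns}$. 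The Pochhammer identity $(m)^{n+1}_s\,(m+(n+1)s)=(m)^{n+2}_s$ then yields $\alpha=a^{s}_{m,n}$.

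For $\beta$ I would enumerate the $B^0$-producing pieces. By Lemma \ref{StrCons}, only $[B^0_s,B^0_q]$ and $[B^0_s,A^0_q]$ yield $B^0$ directly, so the relevant summands of $\mathcal{A}^n_m$ are its $A^0$- and $B^0$-pieces appearing in \eqref{Acal}; when $n=0$ there is additionally a direct contribution from $(m+1)B^n_{m+s}$ in $(m-n+1)Z^n_m\mathbb{B}_s$. I would collect each contribution via Lemma \ref{StrCons} and telescope the resulting rational expression using the Pochhammer identity $(a)^{k+1}_\sigma=(a)^k_\sigma(a+k\sigma)$, mirroring the simplification displayed at the end of the proof of Lemma \ref{AReduce}. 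The consolidated coefficient then simplifies to $a^{s}_{m,n}\,c^{0,s}_{m,n+2}$.

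The principal obstacle is this final telescoping for $\beta$: the raw sum of bracket contributions carries several distinct Pochhammer denominators, and pulling the answer into the factored form $a^{s}_{m,n}\,c^{0,s}_{m,n+2}$ demands the same sort of delicate rational-function cancellation used in the proof of Lemma \ref{AReduce}. The kernel reduction in the first step is what makes the overall problem tractable, since without it one would additionally have to verify that many $A^{p\geq 0}$- and $B^{p\geq 1}$-type coefficients vanish, a check which is now automatic.
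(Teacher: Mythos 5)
Your proposal is correct and follows essentially the paper's route: the paper's proof is precisely the direct evaluation of \([\mathbb{B}_s,\mathcal{A}^n_m]+(m-n+1)Z^n_m\mathbb{B}_s\) from the explicit formula \eqref{Acal} and the structure constants of Lemma \ref{StrCons}. Your use of the \(\ker\Gamma\) membership from Lemma \ref{Znm} to confine the output to the \(A^{-1}\)- and \(B^{0}\)-directions is a sound bookkeeping refinement rather than a different method, and the two coefficients you isolate do reduce (after a short Pochhammer cancellation --- only two contributing terms for \(n\geq1\), three for \(n=0\)) to \(a^{s}_{m,n}\) and \(a^{s}_{m,n}c^{0,s}_{m,n+2}\) as claimed.
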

\begin{proof} Proof follows from Lemma \ref{Znm} and
\bes
[\mathbb{B}_s, \mathcal{A}^n_m]+ (m-n+1)Z^n_m\mathbb{B}_s=
a^{s}_{m,n}A_{m+2s+ns}^{-1}
+ a^{s}_{m,n}c^{0, s}_{m,n+2}B^{0}_{m+ns+s}.
\ees
\end{proof}

\begin{cor}\label{Z0} \(\C^{m+s,s+1}\subseteq \Span\{A^{-1}_{m+2s}\}\) for any \(m\in \N,\) where \(B\)-terms precede \(A\)-terms in our normal form style.
\end{cor}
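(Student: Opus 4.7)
The plan is to combine the classical normal form classification of Lemma~\ref{cl} with the single extra removable direction produced by the time-rescaling action identified in Lemma~\ref{ZnmXs}. I would first inventory \(\LST_{m+s}\), which is finitely spanned by the terms \(A^l_k\), \(B^l_k\) with \(k+ls=m+s\). Using Lemma~\ref{AReduce} (for every admissible \(A^l_k\) with \(l\neq -1\)) and Lemma~\ref{BReduce} (for every admissible \(B^l_k\) with \(l\geq 1\)), I would produce state-change generators \(\mathfrak{A}^l_k\) and \(\mathfrak{B}^l_k\) whose Lie bracket with \(\mathbb{B}_s\) transforms each such term modulo \(\RC^{m+s,s+1}\) into a vector inside \(\Span\{A^{-1}_p,B^0_q\}\); grade conservation under \(\delta\) then pins down \(p=m+2s\) and \(q=m+s\). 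Hence after first-level state transformations alone the complement is already contained in \(\Span\{A^{-1}_{m+2s},B^0_{m+s}\}\), which is just the classical normal form of Lemma~\ref{cl} restricted to grade \(m+s\).

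To kill the surviving \(B^0_{m+s}\), I would then specialize Lemma~\ref{ZnmXs} to \(n=0\), obtaining the additional removable vector
\[
d^{m+s,\,s+1}\bigl(\mathcal{A}^{0}_m,\,(m+1)Z^0_m\bigr)= a^{s}_{m,0}\bigl(A^{-1}_{m+2s}+ c^{0,s}_{m,2}\,B^0_{m+s}\bigr),
\]
and a brief inspection of the explicit formulas
\(a^{s}_{m,0}=\tfrac{m(m+s)}{m+s+2}\) and \(c^{0,s}_{m,2}=\tfrac{(s+2)(m+s+1)}{m(m+s)}\)
shows that both factors are nonzero (for \(m\geq 1\)). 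In particular this removable element has a genuine \(B^0_{m+s}\)-component and is linearly independent from everything that lay in \(\RC^{m+s,s+1}\) before, so it cuts the dimension of the complement down by one.

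Finally, the \(B\)-precedes-\(A\) style convention dictates that this one extra independent direction be used to annihilate the \(B^0_{m+s}\) coordinate rather than the \(A^{-1}_{m+2s}\) one, which forces \(\C^{m+s,s+1}\subseteq\Span\{A^{-1}_{m+2s}\}\). The only technical subtlety I would have to watch is that \(c^{0,s}_{m,2}\) is genuinely nonzero: if it vanished, the new removable direction would lie entirely inside \(\Span\{A^{-1}_{m+2s}\}\) and the style would be unable to target \(B^0_{m+s}\). Verifying this nonvanishing directly from the Pochhammer formula is the main (but mild) computational step; everything else is Lie-algebraic bookkeeping already carried out in Lemmas~\ref{AReduce}, \ref{BReduce}, and \ref{ZnmXs}.
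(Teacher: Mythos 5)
Your proposal is correct and follows essentially the same route as the paper: the paper's one-line proof likewise invokes Lemma~\ref{ZnmXs} with the pair \((\mathcal{A}^{0}_m,(m+1)Z^0_m)\) to eliminate \(B^0_{m+s}\), with the preliminary reduction to \(\Span\{A^{-1}_{m+2s},B^0_{m+s}\}\) via Lemmas~\ref{AReduce}--\ref{BReduce} taken as already established. Your explicit check that \(a^{s}_{m,0}\) and \(c^{0,s}_{m,2}\) are nonzero is a worthwhile detail the paper leaves implicit, but it does not change the argument.
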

\bpr
The proof follows from Lemma \ref{ZnmXs}; using \(\big(\mathcal{A}^{0}_m, (m+1)Z^0_m\big),\) all \(B^{0}_{m+s}\) terms can be eliminated in the \((s+1)\)-th level orbital normal form.
\epr

\begin{rem} \label{updatedr1}
Corollary \ref{Z0} implies that \(\big(\mathcal{A}^{0}_m, (m+1)Z^0_m\big)\) can be used to eliminate all \(B^0_k\)-terms for \(k>s\). This may accidentally simplify the coefficient of \(A^{-1}_{r_1}\) or changes a zero coefficient (of \(A^{-1}_r\) for a \(r<r_1\)) to a nonzero coefficient.
Thus, we need to iteratively update the number \(r_1\). For a given number \(r_*\) assume that \(A^{-1}_{r}\) and \(B^0_{r-s}\) (for \(2s<r<r_*\)) are eliminated from the system. Denote \(a^{-1}_{2s+m}\) and \(b^{0}_{s+m}\) for the updated coefficients of \(A^{-1}_{2s+m}\) and \(B^0_{s+m}.\) Then,
\be\label{r1}
r_1:=\min \Le\{2s+m\;\big|\; a^{-1}_{2s+m}\neq \frac{m(m+s)b^{0}_{s+m}}{(s+2)(s+m+1)}, m\in \N, m\geq r_*-2s\Ri\}.
\ee
Hence, the updated \(r_1\) generically is \(2s+1\); see \cite[Page 2160]{ChenWang} and \cite{GazorMokhtari} for similar remarks. This update is not further necessary once all
\(B^{0}_{r-s}\)-terms (for \(r\leq r_1\)) are eliminated.
\end{rem}

\begin{cor}\label{Bk} For any \(k\geq 1\) and \(N\geq s+1\) we have \(\C^{k(s+1)+s,N}=\{0\}\) if and only if \(k\neq s.\) Furthermore, \(\C^{s^2+2s,N}=\Span \{A_{s^2+3s}^{-1}\}\) for any \(N\geq s+1.\)
\end{cor}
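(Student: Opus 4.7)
First, I would invoke Corollary \ref{Z0} with $m=k(s+1)$: the generator $(\mathcal{A}^0_{k(s+1)},(k(s+1)+1)Z^0_{k(s+1)})$ eliminates the $B^0_{k(s+1)+s}$ component, so $\C^{k(s+1)+s, s+1}\subseteq \Span\{A^{-1}_{k(s+1)+2s}\}$. Because $\RC^{n,N}\supseteq\RC^{n,N-1}$, this inclusion propagates to every $N\geq s+1$, so $\C^{k(s+1)+s, N}$ is always either $\{0\}$ or the one-dimensional $\Span\{A^{-1}_{k(s+1)+2s}\}$, and the corollary reduces to a dichotomy. Substituting the grade constraint $m+ns=k(s+1)$ into the ratio $c^{0,s}_{m,n+2}=\frac{(s+2)(m+(n+1)s+1)}{(m+ns)(m+(n+1)s)}$ of Lemma \ref{ZnmXs} collapses it to $\frac{(s+2)(k(s+1)+s+1)}{k(s+1)(k(s+1)+s)}$, which is independent of $n$; hence every admissible Hamiltonian kernel generator contributes along the same one-dimensional direction $A^{-1}_{k(s+1)+2s}+c^{0,s}_{m,n+2}B^0_{k(s+1)+s}$, so the $(s+1)$-level by itself cannot touch the $A^{-1}$-component and $\C^{k(s+1)+s, s+1}=\Span\{A^{-1}_{k(s+1)+2s}\}$.

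Second, to remove $A^{-1}_{k(s+1)+2s}$ at a higher level $N\geq s+2$, I would apply Lemma \ref{23} with the Eulerian kernel elements $(\mathcal{B}^{k'}_{k'},0)\in\ker(\Gamma)$ of grade $k'(s+1)$ for $1\leq k'<k$. Since the image of $\mathcal{B}^{k'}_{k'}$ against $\mathbb{B}_s$ lies in the one-dimensional span already exhausted above, its only new contribution to $d^{n,N}$ comes via the brackets $[\mathcal{B}^{k'}_{k'},v^{(s+1)}_{(k-k')(s+1)+s}]=\alpha_{k-k'}[\mathcal{B}^{k'}_{k'},A^{-1}_{(k-k')(s+1)+2s}]$, where the residual $\alpha_{k-k'}A^{-1}_{(k-k')(s+1)+2s}$ is inherited from $\C^{(k-k')(s+1)+s, s+1}$ in the preceding paragraph. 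Expanding each bracket via Lemma \ref{StrCons} and absorbing the off-diagonal $A^n_m,B^n_m$ residuals through Lemmas \ref{AReduce}-\ref{BReduce}, I expect the coefficient on $A^{-1}_{k(s+1)+2s}$ to simplify, after telescoping, to a rational expression carrying $(k-s)$ as a factor. This yields $\C^{k(s+1)+s, N}=\{0\}$ for $k\neq s$ and $\C^{s^2+2s, N}=\Span\{A^{-1}_{s^2+3s}\}$, the two halves of the claim.

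The main obstacle is the telescoping in the second step. The expansion of $[\mathcal{B}^{k'}_{k'},A^{-1}_{(k-k')(s+1)+2s}]$ via Lemma \ref{StrCons} generates a long Pochhammer-indexed sum of $A^{l+n}_{k+m}$ and $B^{l+n}_{k+m}$ terms; each must be pushed back into the residual $A^{-1}/B^0$ subspace by the transformations $\mathfrak{A}^n_m$ and $\mathfrak{B}^n_m$ of Lemmas \ref{AReduce}-\ref{BReduce}, the outcomes summed over $k'$, and the common $(k-s)$ factor extracted. The manipulation mirrors the internal simplifications of Lemma \ref{Znm} and generalizes Propositions 6.1.2 and 6.1.5 of \cite{BaidSand91}; it is precisely the sort of bookkeeping that the paper's Maple implementation automates.
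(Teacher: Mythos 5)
There is a genuine gap: your level-$(s+1)$ analysis omits the Eulerian kernel element $(\mathcal{B}^k_k,0)\in\ker(\Gamma)$, and that element is precisely where the dichotomy between $k\neq s$ and $k=s$ comes from. At grade $k(s+1)+s$ and level $s+1$, the removable directions inside the two-dimensional space $\Span\{A^{-1}_{k(s+1)+2s},B^0_{k(s+1)+s}\}$ are generated by two kinds of kernel elements of $\Gamma$: the pairs $\big(\mathcal{A}^n_m,(m-n+1)Z^n_m\big)$ with $m+ns=k(s+1)$, which --- as you correctly observe, since $c^{0,s}_{m,n+2}$ depends only on $m+ns$ --- all produce the \emph{same} one-dimensional direction $A^{-1}_{k(s+1)+2s}+c^{0,s}_{m,n+2}B^0_{k(s+1)+s}$; and the element $(\mathcal{B}^k_k,0)$ of Equation \eqref{Bcal}, for which Equation \eqref{XsBkk} gives $[\mathbb{B}_s,\mathcal{B}^k_k]=\frac{(k-s)^{k+1}_s}{k!(s+1)^k}B^0_{k(s+1)+s}$. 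The Pochhammer symbol $(k-s)^{k+1}_s=(k-s)\,k\,(k+s)\cdots\big(k+(k-1)s\big)$ vanishes exactly when $k=s$, so for $k\neq s$ this supplies a second, independent direction along $B^0_{k(s+1)+s}$; the two directions together exhaust the two-dimensional space and $\C^{k(s+1)+s,\,s+1}=\{0\}$ already at level $s+1$. Your conclusion that $\C^{k(s+1)+s,\,s+1}=\Span\{A^{-1}_{k(s+1)+2s}\}$ for \emph{every} $k$ therefore contradicts the very statement being proved (take $N=s+1$ and $k\neq s$); Corollary \ref{Z0} only yields the inclusion $\C^{m+s,s+1}\subseteq\Span\{A^{-1}_{m+2s}\}$, not equality.

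The higher-level repair in your second step does not close this gap. The brackets $[\mathcal{B}^{k'}_{k'},\alpha_{k-k'}A^{-1}_{(k-k')(s+1)+2s}]$ that you propose to feed into $d^{n,N}$ act on residual coefficients $\alpha_{(k-k')(s+1)+2s}$ which are themselves zero for $k-k'\neq s$ --- by the very corollary under proof --- so the scheme is circular and generically vacuous; moreover the conjectured overall factor $(k-s)$ is asserted rather than derived, and in the paper that factor appears in an entirely different place, namely as the leading factor of $(k-s)^{k+1}_s$ in \eqref{XsBkk}. (The genuine higher-level role of $\mathcal{B}^s_s$ is the one exploited in Lemma \ref{rss}, which concerns the grades $r+s^2$, not $k(s+1)+s$.) What your argument is missing is simply the telescoping computation of $[\mathbb{B}_s,\mathcal{B}^k_k]$ and the observation that $(\mathcal{B}^k_k,0)$ already lies in $\ker(\Gamma)$, hence is available at level $s+1$.
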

\begin{proof} The term \(B^{k}_k\) generates a polynomial vector field \(\mathcal{B}^{k}_k\) such that \((\mathcal{B}^{k}_k,0)\in \ker(\Gamma).\) Indeed,
\be\label{Bcal}
\mathcal{B}^{k}_k:= \sum _{l=0}^{k} \frac{(k-s)^{l}_{s}}{l!(s+1)^l}B_{k+ls}^{k-l},
\ee and by
\(
[\mathbb{B}_s,\mathcal{B}^{k}_k]= \sum _{l=0}^{k} \frac{(k-s)^{l+1}_{s}}{l!(s+1)^l}B_{k+ls+s}^{k-l}-
l\frac{(k-s)^{l}_{s}}{l!(s+1)^{l-1}}B_{k+ls}^{k-l+1},
\) we have
\ba\label{XsBkk}
{[\mathbb{B}_s,\mathcal{B}^{k}_k]}&=&\frac{(k-s)^{k+1}_{s}}{k!(s+1)^k}B_{{k(s+1)+s}}^0.
\ea Thus for any \(k>0,\) Lemma \ref{ZnmXs} and Equation (\ref{XsBkk}) imply that \(A_{k(s+1)+2s}^{-1}, B_{k(s+1)+s}^0\in \IM d^{k(s+1)+s}_{s+1}\) if and only if \(k\neq s.\)
This completes the proof.
\end{proof}
\begin{prop}\label{s+1level}
The generalized saddle-node case of Bogdanov--Takens singularity (\ref{eq1})
can be transformed into the \((s+1)\)-th level orbital normal form
\ba A^1_0+\beta_s B^0_s+ \alpha_{r_1} A^{-1}_{r_1}+ \sum \alpha_i A^{-1}_i,\ea
where the summation is over \(i>r_1\), and \(\alpha_{k(s+1)+2s}=0\) for any natural number \(k\neq s.\) Besides, \(r_1\neq k(s+1)+2s\) for \(k\neq s.\)
\end{prop}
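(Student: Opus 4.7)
The plan is to carry out a level-by-level reduction in the graded Lie algebra $\LST$ with transformation space $A=\LST\oplus\R$, by assembling the results of Lemma \ref{cl}, Lemma \ref{ZnmXs}, Corollaries \ref{Z0} and \ref{Bk}, together with equation (\ref{XsBkk}) and the description (\ref{kerG}) of $\ker(\Gamma)$.

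First I would invoke Lemma \ref{cl} to bring $v$ by state changes to the first level form $A^1_0 + \sum_{k\geq r_1} a_k A^{-1}_k + \sum_{k\geq s} b_k B^0_k$, in which only $A^{-1}_k$ and $B^0_k$ nonlinear terms survive; the leading Eulerian coefficient $b_s\neq 0$ is renamed $\beta_s$. Next, observe that at the $(s+1)$-th level only the kernel elements listed in (\ref{kerG}) can contribute further normalization, namely the pairs $(\mathcal{A}^n_m,(m-n+1)Z^n_m)$ and the pure generators $(\mathcal{B}^k_k,0)$. Lemma \ref{ZnmXs} computes the action of the former on $\mathbb{B}_s$ as a nonzero combination of $A^{-1}_{m+(n+2)s}$ and $B^0_{m+(n+1)s}$, while (\ref{XsBkk}) shows that $\mathcal{B}^k_k$ produces a multiple of $B^0_{k(s+1)+s}$ which is nonzero exactly when $k\neq s$.

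Using these ingredients I would first sweep away the Eulerian tail: by Corollary \ref{Z0}, for each $m\geq 1$ the generator $(\mathcal{A}^0_m,(m+1)Z^0_m)$ eliminates $B^0_{m+s}$ (since $B^0$-terms take priority over $A^{-1}$-terms in the chosen style), modifying the coefficient of $A^{-1}_{m+2s}$ by the explicit factor $c^{0,s}_{m,2}$ from (\ref{arsmn}). After iterating $m\geq 1$ the vector field takes the shape $A^1_0+\beta_s B^0_s + \sum_k \alpha_k A^{-1}_k$. The extra constraints $\alpha_{k(s+1)+2s}=0$ for $k\neq s$ then drop out of Corollary \ref{Bk}: combining $\mathcal{B}^k_k$ with the matching pair $(\mathcal{A}^n_m,(m-n+1)Z^n_m)$ at $(m,n)$ with $m+(n+2)s = k(s+1)+2s$ gives two independent reductions at grade $k(s+1)+s$, so $\C^{k(s+1)+s,s+1}$ is trivial whenever $k\neq s$; by the minimality built into the definition of $r_1$, this also forces $r_1 \neq k(s+1)+2s$ for any $k\neq s$.

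The main obstacle I expect is the bookkeeping described in Remark \ref{updatedr1}: because the elimination of $B^0_{m+s}$ shifts the coefficient of $A^{-1}_{m+2s}$ by a concrete multiple of $b^0_{m+s}$, the minimal index defined by (\ref{rs}) may change once the Eulerian sweep has been performed, and the matching $(m,n)\leftrightarrow k(s+1)+2s$ in the final step must refer to the \emph{updated} $r_1$. Verifying that the iterative rule (\ref{r1}) stabilizes in finitely many steps, and that no coincidental resonance among the updated $\alpha$-coefficients contradicts the stated form, is the one place where more than a direct appeal to the preceding lemmas is needed.
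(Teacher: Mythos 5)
Your proposal is correct and follows essentially the same route as the paper, whose proof of Proposition \ref{s+1level} is simply an appeal to Lemma \ref{ZnmXs} and Corollary \ref{Bk} (with Corollary \ref{Z0} and the $r_1$-update of Remark \ref{updatedr1} implicit). The extra detail you supply — the Eulerian sweep via $(\mathcal{A}^0_m,(m+1)Z^0_m)$, the pairing of $\mathcal{B}^k_k$ with $(\mathcal{A}^n_m,(m-n+1)Z^n_m)$ at grade $k(s+1)+s$, and the caveat that $r_1$ must be iteratively updated — is exactly what the paper delegates to those cited results.
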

\bpr The proof follows from Lemma \ref{ZnmXs} and Corollary \ref{Bk}. \epr
Following Lemma \ref{ZnmXs}, for two different pair of values \(m_1, n_1\) and \(m_2, n_2\), (\(m_1+n_1s=m_2+n_2s\)) we end up with the same pair \(A_{m_1+2s+n_1s}^{-1}\) and \(B^{0}_{m_1+n_1s+s}\) in the right hand side of Equation \eqref{Righthand}. Since \(c^{0, s}_{m_1,n_1+2}= c^{0, s}_{m_2,n_2+2}\), the associated homogeneous transformation terms can not simplify \(A_{m_1+2s+n_1s}^{-1}\) and therefore, they generate an element in the symmetry group of \(\mathbb{B}_s\) as follows. For nonnegative integers \(m_j, n_j\) (\(j=1, 2,\) \(n_j\leq m_j, m_1+n_1s=m_2+n_2s\)) denote
\ba\label{A2}
\mathcal{A}^{n_{1, 2}}_{m_{1, 2}}&:=& \frac{1}{a^{s}_{{m_1},{n_1}}}\mathcal{A}^{n_1}_{m_1}- \frac{1}{a^{s}_{{m_2},{n_2}}}\mathcal{A}^{n_2}_{m_2},
\\\label{Z2}
Z^{n_{1, 2}}_{m_{1, 2}}&:=& \frac{({m_1}-{n_1}+1)}{a^{s}_{{m_1},{n_1}}}Z^{n_1}_{m_1}
-\frac{({m_2}-{n_2}+1)}{a^{s}_{{m_2},{n_2}}}Z^{n_2}_{m_2}.\ea
Hence, \(\delta(\mathcal{A}^{n_{1, 2}}_{m_{1, 2}})=\delta(Z^{n_{1, 2}}_{m_{1, 2}})= m_1+n_1s\).

\begin{cor}\label{kerAk+1Z0} Let \(m_1+n_1s\neq s^2+s.\) Then,
\bes\ker d^{m_1+n_1s+s,s+1}=\Span\Le\{\Le(\mathcal{A}^{n_{1, 2}}_{m_{1, 2}}, Z^{n_{1, 2}}_{m_{1, 2}}\Ri)\big|\, m_1+n_1s=m_2+n_2s\Ri\},\ees
while for \(m_1+n_1s=s^2+s,\)
\bes \ker d^{s^2+2s,s+1}=\Span\Le\{\Le(\mathcal{A}^{n_{1, 2}}_{m_{1, 2}}, Z^{n_{1, 2}}_{m_{1, 2}}\Ri),(\mathcal{B}^s_s,0)\,\big|\, m_1+n_1s=m_2+n_2s\Ri\}.\ees
\end{cor}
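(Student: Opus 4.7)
The plan is to compare $\ker d^{m_1+n_1s+s,s+1}$ with the already-described space $\ker\Gamma$ at grade $m_1+n_1s$. I would begin by observing that whenever $(S,T)\in\ker d^{m_1+n_1s+s,s+1}$ the equality $[\mathbb{B}_s,S]+T\,\mathbb{B}_s=0$ immediately gives $\Gamma(S,T)=[[\mathbb{B}_s,S]+T\,\mathbb{B}_s,A^{-1}_0]=0$, so the kernel I seek is contained in the graded piece of $\ker\Gamma$. By \eqref{kerG}, this piece is spanned by the pairs $(\mathcal{A}^{n'}_{m'},(m'-n'+1)Z^{n'}_{m'})$ with $m'+n's=m_1+n_1s$, together with $(\mathcal{B}^{k}_k,0)$ exactly when the grading forces $k(s+1)=m_1+n_1s$.

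Next I would apply $d^{m_1+n_1s+s,s+1}$ to every such generator. Lemma~\ref{ZnmXs} yields
$$d^{m_1+n_1s+s,s+1}\bigl(\mathcal{A}^{n'}_{m'},(m'-n'+1)Z^{n'}_{m'}\bigr)=a^{s}_{m',n'}\Bigl(A^{-1}_{m_1+2s+n_1s}+c^{0,s}_{m',n'+2}\,B^{0}_{m_1+n_1s+s}\Bigr),$$
while Equation~\eqref{XsBkk} gives $d^{m_1+n_1s+s,s+1}(\mathcal{B}^{k}_k,0)=\frac{(k-s)^{k+1}_s}{k!(s+1)^k}\,B^{0}_{k(s+1)+s}$. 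The pivotal observation is that, writing $M:=m+ns$, one has $c^{0,s}_{m,n+2}=\frac{(s+2)(M+s+1)}{M(M+s)}$, so $c^{0,s}_{m',n'+2}$ depends on $(m',n')$ only through $m'+n's$. Hence every $\mathcal{A}$-image is a scalar multiple of a single fixed vector $v_*:=A^{-1}_{m_1+2s+n_1s}+c\,B^{0}_{m_1+n_1s+s}$ with $c:=c^{0,s}_{m_1,n_1+2}$, and since the Pochhammer ratios $a^{s}_{m',n'}$ are nonzero for every relevant index pair, the intersection of the $\mathcal{A}$-span with the kernel is precisely the hyperplane $\sum\alpha_i a^{s}_{m_i,n_i}=0$, whose natural basis is exactly the family of pairwise differences packaged as $(\mathcal{A}^{n_{1,2}}_{m_{1,2}},Z^{n_{1,2}}_{m_{1,2}})$ in \eqref{A2}--\eqref{Z2}.

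Finally I would bring in the $\mathcal{B}^{k}_k$ contribution. Allowing a coefficient $\beta$ on $(\mathcal{B}^{k}_k,0)$, the $A^{-1}$-component of the total image still forces $\sum\alpha_i a^{s}_{m_i,n_i}=0$; inserting this back collapses the $B^0$-component to the single requirement $\beta\cdot\frac{(k-s)^{k+1}_s}{k!(s+1)^k}=0$. The Pochhammer factor $(k-s)^{k+1}_s$ vanishes exactly when $k=s$, which via $k(s+1)=m_1+n_1s$ singles out the grade $m_1+n_1s=s^2+s$. At that grade $\beta$ is free and $(\mathcal{B}^{s}_s,0)$ enters the kernel, whereas at every other grade the relation forces $\beta=0$, yielding the two stated cases. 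The main obstacle I anticipate is precisely the ``parallel images'' step, i.e.\ confirming that $c^{0,s}_{m,n+2}$ depends only on $m+ns$; once that is in hand, the remainder is a finite-dimensional linear-algebra computation controlled by the $\delta$-grading.
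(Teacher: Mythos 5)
Your argument is correct and takes essentially the same route as the paper: the paper's proof simply invokes Lemma \ref{ZnmXs} and Corollary \ref{Bk}, and the pivotal fact you isolate---that $c^{0,s}_{m,n+2}$ depends on $(m,n)$ only through $m+ns$, so all the $\mathcal{A}$-images are parallel to a single vector while $(\mathcal{B}^k_k,0)$ maps to a nonzero multiple of $B^0_{k(s+1)+s}$ exactly when $k\neq s$---is precisely the observation the paper records in the paragraph preceding Equations \eqref{A2}--\eqref{Z2}. Your write-up merely supplies the reduction to the graded piece of $\ker\Gamma$ and the hyperplane condition $\sum\alpha_i a^{s}_{m_i,n_i}=0$ that the paper leaves implicit.
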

\bpr
The proof is complete by Lemmas \ref{ZnmXs} and \ref{Bk}.
\epr
Most formulas are presented for an arbitrary natural number \(r\) whenever it is possible.

\begin{lem} \label{rss} For any \(N\geq r_1-s+1,\) we have \(\C^{r_1+s^2,N}=\{0\}.\)
\end{lem}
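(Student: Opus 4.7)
My plan is to show that the only possibly surviving grade-$(r_1+s^2)$ term of the $(s+1)$-th level orbital normal form can be eliminated at level $N = r_1 - s + 1$ by exploiting an Eulerian symmetry of $\mathbb{B}_s$. It suffices to verify this at $N = r_1 - s + 1$, since once a term is eliminated at one level it remains absent at all higher levels.

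First, I would pin down what can remain at grade $r_1 + s^2$ after the $(s+1)$-th level. Since $\delta(A^{-1}_p) = p - s$, the only $A^{-1}$ term at this grade is $A^{-1}_{r_1+s^2+s}$; every $B^0_q$ with $q>s$ is removable by Corollary \ref{Z0}; and no non-$A^{-1}, B^0$ term survives Lemma \ref{cl}. Hence it suffices to exhibit $A^{-1}_{r_1+s^2+s}\in \RC^{r_1+s^2,\, r_1-s+1}$.

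The central input is an Eulerian kernel element. Corollary \ref{kerAk+1Z0} (the case $m_1+n_1 s = s^2+s$) supplies $(\mathcal{B}^s_s,\, 0) \in \ker d^{s^2+2s,\, s+1}$. A direct check of \eqref{Bcal}, using $(0)^{l+1}_s = 0$ for $l\geq 1$, reduces $\mathcal{B}^s_s$ to $B^s_s$, and the structure constants of Lemma \ref{StrCons} confirm $[\mathbb{B}_s, B^s_s] = 0$. The crucial structural observation is that in $v^{(s+1)}$ of Proposition \ref{s+1level}, no term appears in grades strictly between $s$ and $r_1-s$, since $r_1$ is the smallest subscript with $\alpha_{r_1}\neq 0$. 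Consequently, in the expansion of $d^{r_1+s^2,\, r_1-s+1}$ applied to an input built on $(B^s_s,0)$ together with a suitable $Y_{r_1+s^2-s}\in \LST_{r_1+s^2-s}$, every cross term drops out except the one coming from the grade-$(r_1-s)$ component $v_{r_1-s} = \alpha_{r_1} A^{-1}_{r_1}$.

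I would then compute $[B^s_s, A^{-1}_{r_1}]$ directly from Lemma \ref{StrCons}: it is a linear combination of $B^{s-1}_{r_1+s}$ and $A^{s-1}_{r_1+s}$. Using Lemma \ref{23}, I would take $Y_{r_1+s^2-s}$ to be the appropriate combination of the transformation generators $\mathfrak{B}^{s-1}_{r_1+s}$ and $\mathfrak{A}^{s-1}_{r_1+s}$ supplied by Lemmas \ref{BReduce} and \ref{AReduce}, projecting the contribution into $\Span\{A^{-1}_{r_1+s^2+s},\, B^0_{r_1+s^2}\}$. The residual $B^0_{r_1+s^2}$ is absorbed by Corollary \ref{Z0}, leaving only $A^{-1}_{r_1+s^2+s}$.

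The main obstacle is the final bookkeeping: one must check that the resulting coefficient of $A^{-1}_{r_1+s^2+s}$ is nonzero. This coefficient is the product of $\alpha_{r_1}$ with the Pochhammer $s$-symbols appearing in \eqref{Atrans}--\eqref{Btrans} evaluated at $m = r_1+s$, $n = s-1$; nondegeneracy should be automatic from $\alpha_{r_1}\neq 0$ together with the fact that these Pochhammer products do not vanish under the standing hypothesis $2s < r_1$. This identifies $A^{-1}_{r_1+s^2+s}$ as an element of $\RC^{r_1+s^2,\, r_1-s+1}$, giving $\C^{r_1+s^2, N} = \{0\}$ for all $N \geq r_1-s+1$.
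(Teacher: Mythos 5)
Your proposal is correct and follows essentially the same route as the paper: it uses the symmetry element \((\mathcal{B}^s_s,0)=(B^s_s,0)\in\ker\Gamma\), computes \([A^{-1}_{r_1},\mathcal{B}^s_s]\) as a combination of \(A^{s-1}_{r_1+s}\) and \(B^{s-1}_{r_1+s}\), and reduces via Lemmas \ref{AReduce}--\ref{BReduce} to a nonzero multiple of \(A^{-1}_{r_1+s^2+s}\) plus a removable \(B^0_{r_1+s^2}\) term. The only difference is that you spell out the vanishing of the intermediate cross terms and the nondegeneracy of the Pochhammer coefficient, which the paper leaves implicit.
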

\bpr For any \(r,\) \([A_{r}^{-1},\mathcal{B}_{s}^{s}]=-r A_{s+r}^{s-1}+{\frac {s ( s+2)}
{s+r+2}}B_{s+r}^{s-1}\). By Lemmas \ref{AReduce}--\ref{BReduce} yield that there exists a \(Y^r_s\) such that
\ba\label{A-1rBcal}
[A_{r}^{-1},\mathcal{B}_{s}^{s}]+ [\mathbb{B}_s, Y^r_s]\!\!
&\!=\!&\!\!
\frac{(r+s)^s_{s}}{(r+3)^s_{s+1}} A_{s+r+{s}^{2}}^{-1}+f_{r,s} B_{r+s^2}^{0},
\ea
where
\be
f_{r,s}:=\frac{(r)^{s-1}_{s}}{(r+2)^{s-1}_{s+1}}+\frac{(s+2)(r+1+s^{2})(r+s)^{s-2}_{s}}{(r+3)^s_{s+1}}-{\frac {(s+2)(r+s)^{s-2}_{s}}{(r+s+2)(r+3+s)^{s-2}_{s+1}}}. \ee
This concludes the proof.
\epr
\begin{rem}\label{Rem3.9}
The above lemma implies that \(\mathcal{B}_{s}^{s}\) (of course, along with \(\mathcal{A}^{0}_{r_1+s^2-s}\) and \(Z^0_{r_1+s^2-s}\)) may simplify \(A_{s+r_1+{s}^{2}}^{-1}\) and \(B_{r_1+{s}^{2}}^{0}.\) Similarly by Corollary \ref{Bk}, \(\mathcal{B}_{k}^{k}\) eliminates both \(B_{{k(s+1)+s}}^0\) and \(A_{{k(s+1)+2s}}^{-1}\). Therefore, for \(r_1\neq 2s \mod{(s+1)}\), these do not contribute to \(\ker d^{N,r_1-s+1}\). The same is true for \(r_1\neq s(s+1)+2s\) by Proposition \ref{s+1level}. Hence, the only remaining case is \(r_1:= s(s+1)+2s\). This gives rise to
\be\label{3.13}
\left(\frac{({r_1}+3)^s_{s+1}}{({r_1}+s)^s_{s}}\mathcal{B}_{s}^{s}, \mathcal{B}_{s, {r_1}}, \frac{({r_1}+s^2-s+1)}{a^{{r_1},s}_{{r_1}+s^2-s,0}}Z^0_{{r_1}+s^2-s}\Ri)\in \ker d^{{r_1}+s^2-s, r_1-s+1},
\ee
where a \(\delta\)-homogeneous vector field \(\mathcal{B}_{s, {r_1}}\) is defined by
\bes  \frac{({r_1}+3)^s_{s+1}}{({r_1}+s)^s_{s}}Y^{r_1}_s+\frac{(2s)!(s+1)^{2s}}{(s)^{{2s}+1}_{s}}\Le(1-c^{0, s}_{{r_1}+s^2-s,2}-\frac{({r_1}+3)^s_{s+1}}{({r_1}+s)^s_{s}}f_{{r_1},s}\Ri)\mathcal{B}_{{2s}}^{{2s}}+\frac{1}{a^{{r_1},s}_{{r_1}+s^2-s,0}}
\mathcal{A}^{0}_{{r_1}+s^2-s}.\ees
The left hand side of Equation \eqref{3.13} can be used to simplify
\(A_{s+r_2+{s}^{2}}^{-1}\) and \(B_{r_2+s^2}^{0}\) when \(r_2< 2{r_1}-2s= 2s(s+1)+2s\); see Proposition \ref{6.2} and Equation \eqref{PNF6D}.
\end{rem}

Now we deal with \(\Le(\mathcal{A}^{n_{1, 2}}_{m_{1, 2}}, Z^{n_{1, 2}}_{m_{1, 2}}\Ri)\) in the \((r_1-s+1)\)-th step of (orbital) hypernormalization.

\begin{lem}\label{XnmA-1} For arbitrary nonnegative integers \(r, m, n, n\leq m+1,\) we have
\bas
&&[\mathcal{A}^n_m, A^{-1}_r]+ (m-n+1) Z^n_mA^{-1}_r
\\&=&
\sum_{l=1}^{n}\Le(
\frac{(n-l+1)(r+m+2+ls)(m)^{l}_{s}}{(m+2+ls)(m+2-n+s)^{l}_{s+1}}
- \frac{r(s+2)(m)^{l-1}_{s}}{(m+2+ls)(m-n+2+s)^{l-1}_{s+1}}\Ri)
A_{r+m+ls}^{n-l-1}
\\&&
+(r+m+2)A_{r+m}^{n-1}
+\sum_{l=1}^{n}
\frac{(s+2)(n-l)(m)^{l-1}_{s}}{(r+m+2+ls)(m+2-n+s)^{l-1}_{s+1}} B_{r+m+ls}^{n-l-1}.
\eas
\end{lem}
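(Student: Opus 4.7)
The proof will be a direct computation that parallels the pattern used in Lemmas \ref{AReduce}, \ref{BReduce}, and \ref{Znm}: substitute the explicit formula for $\mathcal{A}^n_m$ given by Lemma \ref{Znm}, expand each summand using the structure constants in Lemma \ref{StrCons}, apply the $\R$-module rule to $Z^n_m A^{-1}_r$, and then collect and telescope the resulting Pochhammer expressions. No new ideas are required beyond these three ingredients.

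Concretely, I will first record the three bracket/action formulas that will be invoked repeatedly:
\[
[A^{p}_k, A^{-1}_r]=-\frac{(k+r+2)(p+1)}{k+2}\,A^{p-1}_{k+r},
\]
\[
[B^{p}_k, A^{-1}_r]=r\,A^{p-1}_{k+r}-\frac{p(k+2)}{k+r+2}\,B^{p-1}_{k+r},
\qquad
Z^n_m A^{-1}_r=A^{n-1}_{m+r}+\frac{n}{r+m+2}\,B^{n-1}_{m+r}.
\]
These are the only structure constants that ever act, because $A^{-1}_r$ has upper index $-1$ and lower index $r$. Substituting the $A$-sum, the $B$-sum, and the isolated $B^n_m$ term from Lemma \ref{Znm}, together with the module contribution $(m-n+1)Z^n_m A^{-1}_r$, will produce an expression that is a sum of $A^{n-l-1}_{r+m+ls}$ and $B^{n-l-1}_{r+m+ls}$ terms for $l=0,\dots,n$.

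Next I will regroup by the output index $l$. For $l\ge 1$ the coefficient of $A^{n-l-1}_{r+m+ls}$ receives two contributions: one from the $A$-summand of index $l$ in $\mathcal{A}^n_m$ through $[A^{n-l}_{m+ls},A^{-1}_r]$, and one from the $B$-summand of index $l-1$ (producing the $rA$-piece of the bracket $[B^{n-l}_{m+ls},A^{-1}_r]$). The coefficient of $B^{n-l-1}_{r+m+ls}$ comes only from the $B$-part of the same bracket, which immediately yields the stated $B$-coefficient up to a routine Pochhammer rearrangement $(m)^{l-1}_s/[(m+ls+2)(m+2-n+s)^{l-1}_{s+1}]$. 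The distinguished term $A^{n-1}_{r+m}$ (at $l=0$) collects three contributions: the $l=-1$ entry of the $A$-sum (which equals $A^n_m$), the isolated $B^n_m$ term, and the $Z^n_mA^{-1}_r$ contribution; summing and clearing the denominator $m+2$ gives the claimed $(r+m+2)$ coefficient.

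The main obstacle will be Step~3: turning the raw superposition of Pochhammer-weighted terms into the compact closed form displayed on the right-hand side. The simplification is of the same flavor as the telescoping already carried out inside the proof of Lemma \ref{AReduce}, using the identities $(m)^{l}_s=(m)^{l-1}_s\,(m+(l-1)s)$ and $(m+2-n+s)^{l}_{s+1}=(m+2-n+s)^{l-1}_{s+1}\,(m+2-n+ls)$ to align denominators before adding. Once the two contributions to each $A^{n-l-1}_{r+m+ls}$ coefficient are combined over a common denominator, the numerator factors as $(n-l+1)(r+m+2+ls)(m)^l_s-r(s+2)(m)^{l-1}_s(m+2-n+s)$, which after dividing by $(m+2-n+s)^{l}_{s+1}$ produces the two summands displayed in the lemma. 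The $B$-coefficient requires no further manipulation beyond cancelling one Pochhammer factor. Handling the $l=0$ and $l=n$ boundary terms (where some Pochhammer factors collapse) is the only place where one has to argue case-by-case; elsewhere the index $l$ is treated uniformly.
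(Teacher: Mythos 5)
Your overall strategy --- substitute the explicit expression for \(\mathcal{A}^n_m\) from Lemma \ref{Znm}, expand with the structure constants of Lemma \ref{StrCons} and the \(\R\)-module rule, and collect by output index --- is exactly what the paper's one-line proof intends, and the three bracket/action formulas you record are correct. The problem is in the final assembly: with the bracket taken in the order \([\mathcal{A}^n_m, A^{-1}_r]\), as in the statement and in your proposal, the computation does \emph{not} close to the displayed right-hand side. Every contribution coming from a Lie bracket acquires the sign opposite to the one needed, while the contribution of \((m-n+1)Z^n_mA^{-1}_r\) does not, so this is not a harmless overall sign. Concretely, at the slot \(A^{n-1}_{r+m}\) your three contributions are \(-\frac{(m+r+2)(n+1)}{m+2}\) from \([A^n_m,A^{-1}_r]\), \(-\frac{(m-n+1)r}{m+2}\) from the isolated term \(-\frac{m-n+1}{m+2}B^n_m\), and \(+(m-n+1)\) from the module action; these sum to \(m-r-2n\), not to the claimed \(r+m+2\) (already for \(n=0\) one gets \(m-r\)). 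Likewise the two contributions to \(B^{n-1}_{r+m}\), namely \(+\frac{n(m-n+1)}{m+r+2}\) from the isolated \(B^n_m\) term and \(+\frac{n(m-n+1)}{m+r+2}\) from \(Z^n_mA^{-1}_r\), reinforce rather than cancel, whereas the stated right-hand side contains no \(B^{n-1}_{r+m}\) term at all. The identity is in fact correct for \([A^{-1}_r,\mathcal{A}^n_m]+(m-n+1)Z^n_mA^{-1}_r\), which is the orientation in which the result is actually used in Lemma \ref{A-1Acal} and Corollary \ref{ZnDirect}; your proof needed to detect this and work with that orientation (equivalently, flag the sign slip in the statement). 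As written, the assertion that ``summing and clearing the denominator \(m+2\) gives the claimed \((r+m+2)\) coefficient'' is false.

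A smaller point: once the bracket is taken in the correct order, no telescoping or common-denominator manipulation is required for \(l\geq 1\). The two summands displayed in the coefficient of \(A_{r+m+ls}^{n-l-1}\) are precisely the raw contributions of the \(A\)-part \(\frac{(m)^{l}_{s}}{(m+2-n+s)^{l}_{s+1}}A^{n-l}_{m+ls}\) and of the \(B\)-part \(\frac{(s+2)(m)^{l-1}_{s}}{(m+ls+2)(m+2-n+s)^{l-1}_{s+1}}B^{n-l}_{m+ls}\) of \(\mathcal{A}^n_m\) under \([A^{-1}_r,\cdot\,]\), and the stated \(B\)-coefficient is the \(B\)-part of the latter bracket verbatim; the right-hand side is already the uncombined superposition. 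So the factorization you anticipate in Step 3 is unnecessary, and in any case the cofactor \((m+2-n+s)\) you use to align the two Pochhammer denominators should be \(m+2-n+s+(l-1)(s+1)\), not \(m+2-n+s\).
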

\bpr
The proof follows from Lemma \ref{Znm}.
\epr

\begin{lem}\label{A-1Acal} There exists a \(\delta\)-homogenous polynomial vector field \(\mathcal{Y}^{n,r}_m\in \LST\) such that
\ba\label{A-1AcalEquation}
[A^{-1}_r, \mathcal{A}^n_m]+ (m-n+1)Z^n_mA^{-1}_r+ [\mathbb{B}_s,\mathcal{Y}^{n,r}_m]&=& \zeta^{r,s,n}_{m,n}A_{r+m+ns}^{-1}+ b^{r,s}_{m,n}c^{r,s}_{m,n} B_{r+m+(n-1)s}^0,\quad
\ea where \(r, m, n \in \mathbb{N}\cup\{0\}, \delta(\mathcal{Y}^{n,r}_m)= r+m+(n-2)s,\)
\ba \label{drsmn}
b^{r,s}_{m,n}:= \zeta^{r,s,n}_{m,n}- \frac{(r+m+ns+2)(m)^{n}_{s}}{(m+2-n+s)^{n}_{s+1}}, \; \hbox{ and }\;
c^{r,s}_{m,n}:= \frac{(s+2)\big(r+m+1+(n-1)s\big)}{\big(r+m+(n-2)s\big)^{2}_{s}},
\ea where \(\zeta^{r,s,n}_{m,n}\) follows Equation (\ref{zeta}) in Appendix.
\end{lem}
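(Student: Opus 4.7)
The plan is to expand the left-hand side into a sum of $A^{l}_{k}$ and $B^{l}_{k}$ terms using the structure constants of Lemma \ref{StrCons}, and then normalize each summand via the reduction Lemmas \ref{AReduce} and \ref{BReduce} so that only terms of the form $A^{-1}_{r+m+ns}$ and $B^{0}_{r+m+(n-1)s}$ survive. Starting from the explicit expression \eqref{Acal} for $\mathcal{A}^n_m$ and using the $\R$-module action of Lemma \ref{StrCons}, I write
\[
[A^{-1}_r,\mathcal{A}^n_m]+(m-n+1)Z^n_m A^{-1}_r
\]
as a finite linear combination of terms $A^{n-l-1}_{r+m+ls}$ and $B^{n-l-1}_{r+m+ls}$, indexed by $l$, with rational coefficients in $r,m,n,s$; this is essentially Lemma \ref{XnmA-1} applied after the sign reversal $[A^{-1}_r,\mathcal{A}^n_m]=-[\mathcal{A}^n_m,A^{-1}_r]$. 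A direct check of the grading function shows that $\delta(A^{n-l-1}_{r+m+ls})=\delta(B^{n-l-1}_{r+m+ls})=r+m+(n-1)s$, so the whole expression lies in $\LST_{r+m+(n-1)s}$.

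Next, I apply Lemma \ref{AReduce} to each term $A^{n-l-1}_{r+m+ls}$ with $n-l-1\ge 0$ and Lemma \ref{BReduce} to each $B^{n-l-1}_{r+m+ls}$ with $n-l-1\ge 1$, producing auxiliary generators $\mathfrak{A}^{n-l-1}_{r+m+ls}$ and $\mathfrak{B}^{n-l-1}_{r+m+ls}$ whose brackets with $\mathbb{B}_s$ reduce the corresponding terms, modulo $[\mathbb{B}_s,\cdot]$, to scalar multiples of $A^{-1}_{r+m+ns}$ and $B^{0}_{r+m+(n-1)s}$. Defining $\mathcal{Y}^{n,r}_m$ as the signed sum of these auxiliary generators, each weighted by the coefficient carried by the corresponding term in the initial expansion, yields a $\delta$-homogeneous vector field of grade $r+m+(n-1)s-s=r+m+(n-2)s$, and by construction the identity \eqref{A-1AcalEquation} holds with right-hand side lying in $\Span\{A^{-1}_{r+m+ns},\,B^{0}_{r+m+(n-1)s}\}$.

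The remaining task is to collect the coefficients of $A^{-1}_{r+m+ns}$ and $B^{0}_{r+m+(n-1)s}$. The $A^{-1}$-coefficient receives contributions only from the $A$-reductions, while the $B^{0}$-coefficient receives contributions from both the $A$- and the $B$-reductions. Telescoping the resulting finite sums in $l$, the $A^{-1}$-coefficient is identified as $\zeta^{r,s,n}_{m,n}$ in Equation (\ref{zeta}) of the appendix, and the $B^{0}$-coefficient as $b^{r,s}_{m,n}c^{r,s}_{m,n}$ with the two factors as in \eqref{drsmn}. The main obstacle is precisely this bookkeeping: the sums involve nested products of Pochhammer $k$-symbols $(\cdot)^{\cdot}_{s}$ and $(\cdot)^{\cdot}_{s+1}$ of the same shape as those simplified inside the proof of Lemma \ref{AReduce}, but now carrying an additional dependence on $r$. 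Verifying the stated closed forms, and in particular recognising that the $B^{0}$-coefficient factors through the common factor $(s+2)\bigl(r+m+1+(n-1)s\bigr)/\bigl(r+m+(n-2)s\bigr)^{2}_{s}$, requires executing the same style of Pochhammer telescoping identity once for each sum and is best organised by isolating the $l=n$ boundary term $(r+m+2)A^{n-1}_{r+m}$ of Lemma \ref{XnmA-1} before collapsing the remaining $l<n$ contributions.
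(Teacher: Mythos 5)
Your proposal matches the paper's own proof in both strategy and substance: the paper likewise invokes Lemma \ref{XnmA-1} for the expansion, Lemmas \ref{AReduce}--\ref{BReduce} to build \(\mathcal{Y}^{n,r}_m\) and push everything onto \(A^{-1}_{r+m+ns}\) and \(B^{0}_{r+m+(n-1)s}\), and a Pochhammer telescoping identity to match the coefficients with \(\zeta^{r,s,n}_{m,n}\) and \(b^{r,s}_{m,n}c^{r,s}_{m,n}\) (via Equation \eqref{brs}). Your level of detail on the final bookkeeping is comparable to the paper's, which records only the single telescoping identity for \(b^{r,s}_{m,n}-\zeta^{r,s,n}_{m,n}\), so no genuine gap beyond what the paper itself leaves to computation.
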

\bpr
Lemmas \ref{AReduce}, \ref{BReduce}, and \ref{XnmA-1} imply that there exists a \(\delta\)-homogenous \(\mathcal{Y}^{n,r}_m\in\LST\) such that Equation (\ref{A-1AcalEquation}) holds, where \(\frac{b^{r,s}_{m,n}c^{r,s}_{m,n}}{(s+2)}\) is given by Equation \eqref{brs} and \(b^{r,s}_{m,n}-\zeta^{r,s,n}_{m,n}\) equals to
\bas
&&\sum _{l=1}^{n-1} \frac{\big(r-m+s-ls+(l-n+s)r \big)(m)^{l-1}_{s}\big(r+m+ls\big)^{n-l}_{s}}{(m+2-n+s)^{l}_{s+1}
\big(r+m-n+2+l(s+1)\big)^{n-l}_{s+1}}-\frac{(r+m)^{n}_{s}}{(r+m-n+3+s)^{n-1}_{s+1}}
\\&&
- \frac{(r+m+2+ns)(m)^{n}_{s}}{\big(m+2+ns\big)(m+2-n+s)^{n}_{s+1}}+ \frac{r(s+2)(m)^{n-1}_{s}}{(m+ns+2)(m+2-n+s)^{n-1}_{s+1}}.
\eas
Now by telescoping series we have
\bas
&&
\sum _{l=1}^{n-1} \frac{\big(r-m+s-ls+(l-n+s)r \big)(m)^{l-1}_{s}\big(r+m+ls\big)^{n-l}_{s}}{(m+2-n+s)^{l}_{s+1}
\big(r+m-n+2+l(s+1)\big)^{n-l}_{s+1}}
\\&=&\frac{(r+m)^{n}_{s}}{(m-n+1)(r+m-n+3+s)^{n-1}_{s+1}}
-\frac{(r+m+ns-s)(m)^{n-1}_{s}}{(m-n+1)^{n}_{s+1}}.
\eas
This completes the proof.
\epr

\begin{cor}\label{ZnDirect} For nonnegative integers \(n_1, m_1, n_2, m_2\) such that \(n_1+m_1s=n_2+m_2s, n_1\neq n_2,\) let \(\mathcal{A}^{n_{1, 2}}_{m_{1, 2}}\) and \(Z^{n_{1, 2}}_{m_{1, 2}}\) follow Equations (\ref{A2}--\ref{Z2}), respectively. Then, for any nonnegative integers \(m_3\) and \(n_3\) such that \(m_3+n_3s={r_1}+m_1+n_1s-2s,\) there exists a state \(\delta\)-homogenous solution \(\mathcal{Y}^{n_{1, 2},{r_1}}_{m_{1, 2}}\) such that
\bas
\left[A^{-1}_{r_{1}}, \mathcal{A}^{n_{1, 2}}_{m_{1, 2}}\right]
+ Z^{n_{1, 2}}_{m_{1, 2}}A^{-1}_{r_{1}}+\left[\mathbb{B}_s, \mathcal{Y}^{n_{1, 2},{r_1}}_{m_{1, 2}}\right]&= &
\Le(\frac{\zeta^{{r_1},s, n_1}_{{m_1},{n_1}}}{a^{r_1,s}_{{m_1},{n_1}}}- \frac{\zeta^{{r_1},s,n_2}_{{m_2},{n_2}}}{a^{{r_1},s}_{{m_2},{n_2}}}\Ri)A_{{r_1}+{m_1}+{n_1}s}^{-1}
\\&&+
c^{0, s}_{{m_3},{n_3}+2}\Le(\frac{\zeta^{{r_1},s,n_1}_{{m_1},{n_1}}}{a^{{r_1},s}_{{m_1},{n_1}}}- \frac{\zeta^{{r_1},s,n_2}_{{m_2},{n_2}}}{a^{{r_1},s}_{{m_2},{n_2}}}\Ri)B_{{r_1}+{m_1}+{n_1}s-s}^0.
\eas

\end{cor}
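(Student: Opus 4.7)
The plan is to apply Lemma \ref{A-1Acal} twice with $r = r_1$, once for $(m,n) = (m_j,n_j)$, $j = 1,2$, and then take the linear combination of the two resulting identities dictated by the definitions \eqref{A2}--\eqref{Z2}. Writing $a_j := a^{s}_{m_j,n_j}$, $\zeta_j := \zeta^{r_1,s,n_j}_{m_j,n_j}$, $b_j := b^{r_1,s}_{m_j,n_j}$, $c_j := c^{r_1,s}_{m_j,n_j}$, and setting
\bes
\mathcal{Y}^{n_{1,2},r_1}_{m_{1,2}} := \frac{1}{a_1}\mathcal{Y}^{n_1,r_1}_{m_1} - \frac{1}{a_2}\mathcal{Y}^{n_2,r_1}_{m_2},
\ees
the grading identity $m_1 + n_1 s = m_2 + n_2 s$ forces both contributions to land in the same $A^{-1}$- and $B^{0}$-slots, so the left-hand side of the claim collapses to $(\zeta_1/a_1 - \zeta_2/a_2)\,A^{-1}_{r_1 + m_1 + n_1 s} + (b_1 c_1/a_1 - b_2 c_2/a_2)\,B^{0}_{r_1 + m_1 + n_1 s - s}$. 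The $A^{-1}$-coefficient already matches the target, so all that remains is to verify that $b_1 c_1/a_1 - b_2 c_2/a_2 = c^{0,s}_{m_3,n_3+2}\,(\zeta_1/a_1 - \zeta_2/a_2)$.

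The first reduction is the observation that $c_j$ depends on $(m_j,n_j)$ only through $m_j + n_j s$. Substituting $r_1 + m_j + (n_j-2)s = m_3 + n_3 s$ into the formula for $c^{r_1,s}_{m_j,n_j}$ in \eqref{drsmn} and comparing with the definition of $c^{0,s}_{\cdot,\cdot}$ in \eqref{arsmn} gives $c_1 = c_2 = c^{0,s}_{m_3,n_3+2}$. After factoring out this common constant from the $B^{0}$-coefficient, the required identity reduces to the single assertion that $(b_j - \zeta_j)/a_j$ is independent of $j$.

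This last claim I propose to verify by a short Pochhammer calculation. From \eqref{drsmn}, $b_j - \zeta_j = -(r_1 + m_j + n_j s + 2)(m_j)^{n_j}_{s}/(m_j - n_j + 2 + s)^{n_j}_{s+1}$, and from \eqref{arsmn}, $a_j = (m_j)^{n_j + 2}_{s}/(m_j - n_j + 2 + s)^{n_j + 1}_{s+1}$. The two Pochhammer quotients telescope (the ratio of the denominator factors gives $m_j + (n_j+1)s + 2$ and the ratio of the numerator factors gives $1/[(m_j+n_j s)(m_j+(n_j+1)s)]$), producing
\bes
\frac{b_j - \zeta_j}{a_j} = -\frac{(r_1 + m_j + n_j s + 2)\bigl(m_j + (n_j + 1)s + 2\bigr)}{(m_j + n_j s)\bigl(m_j + (n_j + 1)s\bigr)},
\ees
which depends on $(m_j,n_j)$ only through the grade $m_j + n_j s$ and is therefore the same for $j = 1,2$. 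The main obstacle is precisely this Pochhammer bookkeeping: the identification $c_j = c^{0,s}_{m_3,n_3+2}$ together with the telescoping collapse of $(b_j - \zeta_j)/a_j$ to a function of $m_j + n_j s$ alone are what force the clean factorization demanded by the corollary, while everything else is a routine bilinear combination of the identity supplied by Lemma \ref{A-1Acal}.
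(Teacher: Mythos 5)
Your proposal is correct, and its skeleton coincides with the paper's: you invoke Lemma \ref{A-1Acal} twice with \(r=r_1\), form \(\mathcal{Y}^{n_{1,2},r_1}_{m_{1,2}}:=\frac{1}{a_1}\mathcal{Y}^{n_1,r_1}_{m_1}-\frac{1}{a_2}\mathcal{Y}^{n_2,r_1}_{m_2}\) exactly as the authors do, and reduce everything to the scalar identity \(b_1c_1/a_1-b_2c_2/a_2=c^{0,s}_{m_3,n_3+2}\,(\zeta_1/a_1-\zeta_2/a_2)\). Where you genuinely depart from the paper is in how that identity is verified. The paper first treats the special case \(n_2=n_3=0\) (using \(b^{r,s}_{m_2,0}=0\) and \(\zeta^{r,s,0}_{m_2,0}=r+m_2+2\)), then extends to arbitrary \(n_2\) by the mediant property \(\frac{a}{b}=\frac{c}{d}\Rightarrow\frac{a}{b}=\frac{a-c}{b-d}\), and finally handles \(n_3\neq0\) via \(c^{0,s}_{m,n+2}=c^{0,s}_{m+ns,2}\). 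You instead isolate two structural facts: that \(c^{r_1,s}_{m_j,n_j}\) depends on \((m_j,n_j)\) only through \(r_1+m_j+n_js\) and hence equals \(c^{0,s}_{m_3,n_3+2}\) under the grading constraint, and that \((b_j-\zeta_j)/a_j\) telescopes to \(-\frac{(r_1+m_j+n_js+2)(m_j+(n_j+1)s+2)}{(m_j+n_js)(m_j+(n_j+1)s)}\), a function of the grade \(m_j+n_js\) alone. Both computations check out against Equations \eqref{arsmn} and \eqref{drsmn}, and together they give the factorization immediately, with no case analysis: since \(b_j/a_j\) and \(\zeta_j/a_j\) differ by a \(j\)-independent constant, their differences across \(j=1,2\) agree, and the common factor \(c_1=c_2=c^{0,s}_{m_3,n_3+2}\) pulls out. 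Your route is shorter and makes the mechanism more transparent; the paper's route has the minor advantage of only ever computing with the degenerate parameters \(n=0\), at the cost of the extra mediant and periodicity steps.
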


\bpr
Let \(\mathcal{Y}^{n_1,r}_{m_1}\) and \(\mathcal{Y}^{n_2,r}_{m_2}\) denote the state solutions given in Lemma \ref{A-1Acal}.  Then,

\bas
&&\left[A^{-1}_{r_{1}}, \frac{\mathcal{A}^{n_j}_{m_j}}{a^{{r_{1}},s}_{{m_j},{n_j}}}\right]+ \frac{m_j-n_j+1}{a^{{r_{1}},s}_{{m_j},{n_j}}}Z^{n_j}_{m_j}A^{-1}_{r_{1}}+\left[\mathbb{B}_s, \frac{\mathcal{Y}^{n_j,{r_{1}}}_{m_j}}{a^{{r_{1}},s}_{{m_j},{n_j}}}\right]
\\ &=&\frac{\zeta^{{r_{1}},s,n_j}_{{m_j},{n_j}}}{a^{{r_{1}},s}_{{m_j},{n_j}}}A_{{r_{1}}+{m_j}+{n_j}s}^{-1}
+ \frac{b^{{r_{1}},s}_{{m_j},{n_j}}c^{{r_{1}},s}_{{m_j},{n_j}}}{a^{{r_{1}},s}_{{m_j},{n_j}}} B_{{r_{1}}+{m_j}+({n_j}-1)s}^0.
\eas

\noindent Recall that \(\delta(\mathcal{Y}^{n_1,r}_{m_1})= \delta(\mathcal{Y}^{n_2,r}_{m_2})= r+m_1+n_1s-2s.\) Therefore, for any \(r\in \mathbb{N}\) define
\be
\mathcal{Y}^{n_{1, 2},r}_{m_{1, 2}}:= \frac{1}{a^{s}_{{m_1},{n_1}}} \mathcal{Y}^{n_1,r}_{m_1}- \frac{1}{a^{s}_{{m_2},{n_2}}}\mathcal{Y}^{n_2,r}_{m_2}.
\ee Now, we claim that the condition \(m_3+n_3s=r+m_1+n_1s-2s\) implies
\bas
c^{0, s}_{{m_3},{n_3}+2}&=& \frac{\frac{b^{r,s}_{{m_1},{n_1}}c^{r,s}_{{m_1},{n_1}}}{a^{s}_{{m_1},{n_1}}}- \frac{b^{r,s}_{{m_2},{n_2}}c^{r,s}_{{m_2},{n_2}}}{a^{s}_{{m_2},{n_2}}}}{\frac{\zeta^{r,s,n_1}_{{m_1},{n_1}}}{a^{s}_{{m_1},{n_1}}}- \frac{\zeta^{r,s,n_2}_{{m_2},{n_2}}}{a^{s}_{{m_2},{n_2}}}}.
\eas
Assume that \(n_2=0\) and \(n_3=0.\) Then, \(m_1=m_2-n_1s,\) \(m_3=r+m_2-2s,\) \(\zeta^{r,s,0}_{m_2,0}=r+m_2+2,\) and \(b^{r,s}_{m_2,0}=0.\) Then,
\ba\label{comp1}
\frac{\frac{b^{r,s}_{m_1,n_1}c^{r,s}_{m_1,n_1}}{a^{s}_{m_1,n_1}}- \frac{b^{r,s}_{m_2,0}c^{r,s}_{m_2,0}}{a^{s}_{m_2,0}}}{
\frac{\zeta^{r,s,n_1}_{m_1,n_1}}{a^{s}_{m_1,n_1}}-\frac{\zeta^{r,s,0}_{m_2,0}}{a^{s}_{m_2,0}}
}&=&\frac{\frac{b^{r,s}_{m_2-n_1s,n_1}c^{r,s}_{m_2-n_1s,n_1}}{a^{s}_{m_2-n_1s,n_1}}- \frac{b^{r,s}_{m_2,0}c^{r,s}_{m_2,0}}{a^{s}_{m_2,0}}}{
\frac{\zeta^{r,s,n_1}_{m_2-n_1s,n_1}}{a^{s}_{m_2-n_1s,n_1}}-\frac{\zeta^{r,s,0}_{m_2,0}}{a^{s}_{m_2,0}}
}
\\\nonumber&=&
\frac{a^{s}_{m_2,0}b^{r,s}_{m_2-n_1s,n_1}c^{r,s}_{m_2-n_1s,n_1}}{a^{s}_{m_2,0}
\zeta^{r,s,n_1}_{m_2-n_1s,n_1}-(r+m_2+2)
a^{s}_{m_2-n_1s,n_1}}
\\&=&\label{comp2}\frac{(s+2)(r+m_2-s+1)}{(r+m_2-2s)(r+m_2-s)}= c^{0,s}_{m_3,2},
\ea where the last equation follows \(c^{r,s}_{m_2,0}= c^{r,s}_{m_2-n_1s,n_1}= c^{0,s}_{r+m_2-2s,2}= c^{0,s}_{m_3,2}.\) For arbitrary number \(n_2,\) assume that \(n_3=0, \) \ie \(m_3+2s= r+m_1+n_1s= r+m_2+n_2s.\) Therefore, Equations (\ref{comp1}--\ref{comp2}) imply
\bas
c^{0,s}_{m_3,2}&=&
\frac{\frac{b^{r,s}_{{m_1}-{n_1}s,{n_1}}c^{r,s}_{{m_1}-{n_1}s,{n_1}}}{a^{s}_{{m_1}-{n_1}s,{n_1}}}- \frac{b^{r,s}_{{m_1},0}c^{r,s}_{{m_1},0}}{a^{s}_{{m_1},0}}}{
\frac{\zeta^{r,s,n_1}_{{m_1}-{n_1}s,{n_1}}}{a^{s}_{{m_1}-{n_1}s,{n_1}}}-\frac{\zeta^{r,s,0}_{{m_1},0}}{a^{s}_{{m_1},0}}}
= \frac{\frac{b^{r,s}_{{m_2}-{n_2}s,{n_2}}c^{r,s}_{{m_2}-{n_2}s,{n_2}}}{a^{s}_{{m_2}-{n_2}s,{n_2}}}- \frac{b^{r,s}_{{m_2},0}c^{r,s}_{{m_2},0}}{a^{s}_{{m_2},0}}}{
\frac{\zeta^{r,s,n_2}_{{m_2}-{n_2}s,{n_2}}}{a^{s}_{{m_2}-{n_2}s,{n_2}}}-\frac{\zeta^{r,s,0}_{{m_2},0}}{a^{s}_{{m_2},0}}}.
\eas (Recall that \(\frac{a}{b}=\frac{c}{d}\) implies \(\frac{a}{b}=\frac{a-c}{b-d}.)\) Thus,
\bas
c^{0, s}_{{m_3},2}&=& \frac{\frac{b^{r,s}_{{m_1},{n_1}}c^{r,s}_{{m_1},{n_1}}}{a^{s}_{{m_1},{n_1}}}- \frac{b^{r,s}_{{m_2},{n_2}}c^{r,s}_{{m_2},{n_2}}}{a^{s}_{{m_2},{n_2}}}}{\frac{\zeta^{r,s,n_1}_{{m_1},{n_1}}}{a^{s}_{{m_1},{n_1}}}- \frac{\zeta^{r,s,n_2}_{{m_2},{n_2}}}{a^{s}_{{m_2},{n_2}}}},
\eas where \(n_3=0.\) When \(n_3\neq 0,\) the proof is complete by \(c^{0,s}_{m,n+2}= c^{0,s}_{m+ns,2}\) for any \(n\) and \(m.\)
\epr

\begin{cor}\label{ker}
Let \(m_1+n_1s=m_2+n_2s, m_3+n_3s= r+m_1+n_1s-2s,\) and
\ba\label{A3}
\mathcal{A}^{n_{1, 2, 3}, r}_{m_{1, 2, 3}}&:=& \mathcal{Y}^{n_{1, 2},r}_{m_{1, 2}}- \frac{\frac{\zeta^{r,s,n_1}_{{m_1},{n_1}}}{a^{s}_{{m_1},{n_1}}}- \frac{\zeta^{r,s,n_2}_{{m_2},{n_2}}}{a^{s}_{{m_2},{n_2}}}}{a^{s}_{{m_3},{n_3}}}\mathcal{A}^{m_3}_{n_3},
\\\label{Z3}
Z^{n_{1, 2, 3},r}_{m_{1, 2, 3}}&:=& - \frac{\frac{\zeta^{r,s,n_1}_{{m_1},{n_1}}}{a^{s}_{{m_1},{n_1}}}- \frac{\zeta^{r,s,n_2}_{{m_2},{n_2}}}{a^{s}_{{m_2},{n_2}}}}{a^{s}_{{m_3},{n_3}}}Z^{m_3}_{n_3}.
\ea Then, for \(r=r_1\) we have
\bes
[A^{-1}_{r_1},\mathcal{A}^{n_{1, 2}}_{m_{1, 2}}]+ Z^{n_{1, 2}}_{m_{1, 2}}A^{-1}_{r_1} +[\mathbb{B}_s,\mathcal{A}^{n_{1, 2, 3}, r_1}_{m_{1, 2, 3}}]+ Z^{n_{1, 2, 3},r_1}_{m_{1, 2, 3}}\mathbb{B}_s= 0.
\ees
\end{cor}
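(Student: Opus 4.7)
The plan is to recognize that definitions \eqref{A3} and \eqref{Z3} have been engineered precisely to cancel the residual \(A^{-1}\) and \(B^0\) terms isolated by Corollary \ref{ZnDirect}. The argument is therefore essentially a bookkeeping check: I substitute these definitions into the left-hand side, split it into two natural blocks, and verify that each block is a scalar multiple of the same pair \(A^{-1}_{r_1+m_1+n_1 s}\) and \(B^0_{r_1+m_1+n_1 s-s}\), with proportionality constants arranged to force cancellation.

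Concretely, let \(\kappa\) abbreviate the common scalar \(\bigl(\zeta^{r_1,s,n_1}_{m_1,n_1}/a^{r_1,s}_{m_1,n_1} - \zeta^{r_1,s,n_2}_{m_2,n_2}/a^{r_1,s}_{m_2,n_2}\bigr)/a^s_{m_3,n_3}\) appearing in \eqref{A3}--\eqref{Z3}. After substitution, the left-hand side decomposes as
\[
\bigl([A^{-1}_{r_1},\mathcal{A}^{n_{1,2}}_{m_{1,2}}]+Z^{n_{1,2}}_{m_{1,2}}A^{-1}_{r_1}+[\mathbb{B}_s,\mathcal{Y}^{n_{1,2},r_1}_{m_{1,2}}]\bigr)-\kappa\bigl([\mathbb{B}_s,\mathcal{A}^{n_3}_{m_3}]+(m_3-n_3+1)Z^{n_3}_{m_3}\mathbb{B}_s\bigr).
\]
Corollary \ref{ZnDirect} evaluates the first block as \(\kappa\, a^s_{m_3,n_3}\bigl(A^{-1}_{r_1+m_1+n_1 s}+c^{0,s}_{m_3,n_3+2}B^0_{r_1+m_1+n_1 s-s}\bigr)\), while Lemma \ref{ZnmXs}, applied at the grade forced by the constraint \(m_3+n_3 s = r_1+m_1+n_1 s-2s\), evaluates the second block as \(a^s_{m_3,n_3}\bigl(A^{-1}_{r_1+m_1+n_1 s}+c^{0,s}_{m_3,n_3+2}B^0_{r_1+m_1+n_1 s-s}\bigr)\). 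The two expressions are equal up to the factor \(\kappa\), so their difference vanishes, as required.

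The one point requiring care is the coincidence of the coefficient \(c^{0,s}_{m_3,n_3+2}\) in the two evaluations: in the first block this factor is produced from the data \((m_j,n_j,r_1)\) for \(j=1,2\), whereas in the second it arises directly from \((m_3,n_3)\). This identification has, however, already been carried out in the proof of Corollary \ref{ZnDirect}, using \(m_3+n_3 s = r_1+m_1+n_1 s-2s\) together with the invariance \(c^{0,s}_{m,n+2}=c^{0,s}_{m+ns,2}\). Consequently, the present corollary reduces to the bookkeeping described above, and the genuine technical obstacle is the coefficient matching performed inside Corollary \ref{ZnDirect} rather than anything new here.
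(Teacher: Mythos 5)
Your proof is correct and is exactly the argument the paper intends (the paper gives no separate proof beyond ``the above gives rise to the following corollary''): you substitute the definitions, evaluate the first block by Corollary \ref{ZnDirect}, evaluate the second block by Lemma \ref{ZnmXs} applied at \((m_3,n_3)\) with \(m_3+n_3s=r_1+m_1+n_1s-2s\), and observe that the coefficient identification \(c^{0,s}_{m_3,n_3+2}\) already established inside Corollary \ref{ZnDirect} forces the two blocks to cancel. Note only that you have silently (and correctly) repaired two typos in \eqref{A3}--\eqref{Z3}: the indices of \(\mathcal{A}^{m_3}_{n_3}\) and \(Z^{m_3}_{n_3}\) should read \(\mathcal{A}^{n_3}_{m_3}\) and \(Z^{n_3}_{m_3}\), and \eqref{Z3} should carry the factor \((m_3-n_3+1)\) as in \eqref{Z2}, which your second block correctly includes.
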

The above gives rise to the following corollary.
\begin{cor} \label{r1s1}
The \(({r_1}-s+1)\)-th level orbital normal form is given by
\bes
A^1_0+\beta_s B^0_s+ \alpha_{r_1} A^{-1}_{r_1}+ \sum \alpha_i A^{-1}_i,
\ees where \(\alpha_{{r_1}+s^2+s}=0\), \(\alpha_{k(s+1)+2s}=0\) for any natural number \(k\neq s\), and \(r_1\neq k(s+1)+2s\) for any \(k\neq s.\) In addition,
we have \(\alpha_{s+r_2+{s}^{2}}=0\) when \(r_1= s(s+1)+2s\) and \(r_2<2s^2+4s\).

\end{cor}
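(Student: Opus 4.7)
The plan is to apply Lemma \ref{23} successively to pass from the $(s+1)$-th level normal form of Proposition \ref{s+1level} up to level $N = r_1 - s + 1$. First, by the definition of $r_1$ and Corollary \ref{Z0}, the graded components $v_k$ of the $(s+1)$-th level form vanish for every $s < k < r_1 - s$. Consequently $d^{n, N}$ coincides with $d^{n, s+1}$ on each intermediate level $s+1 < N \leq r_1 - s$; no new reductions occur, the $(r_1 - s)$-th level normal form is identical to that of Proposition \ref{s+1level}, and $\ker d^{n-s-1, r_1-s} = \ker d^{n-s-1, s+1}$ for every $n$.

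The core step is at level $N = r_1 - s + 1$, where $v_{r_1 - s} = \alpha_{r_1} A^{-1}_{r_1}$ first contributes to $d^{n, N}$. By Lemma \ref{23},
\[
\RC^{n, r_1-s+1} = \RC^{n, s+1} + \Span\Big\{d^{n, r_1-s+1}\big(g * v^{(s+1)} + [\mathbb{B}_s, Y_{n-s}]\big) \,\big|\, g \in \ker d^{n-s-1, s+1} \times A_{n-s}\Big\}.
\]
I decompose $\ker d^{n-s-1, s+1}$ using Corollary \ref{kerAk+1Z0}. For each generator $(\mathcal{A}^{n_{1,2}}_{m_{1,2}}, Z^{n_{1,2}}_{m_{1,2}})$ with $m_1 + n_1 s \neq s^2 + s$, Corollary \ref{ker} supplies an explicit correction $(\mathcal{A}^{n_{1,2,3}, r_1}_{m_{1,2,3}}, Z^{n_{1,2,3}, r_1}_{m_{1,2,3}})$ whose action on $\mathbb{B}_s$ at grade $r_1 + m_1 + n_1 s - s$ exactly cancels the action of the generator on $A^{-1}_{r_1}$; hence the generator's image already lies in $\RC^{n, s+1}$ and contributes nothing new. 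The only remaining candidate for a fresh reduction is therefore the exceptional kernel element $(\mathcal{B}^s_s, 0)$ at grade $s^2 + s$.

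For this element, Lemma \ref{rss} evaluates $[A^{-1}_{r_1}, \mathcal{B}^s_s] + [\mathbb{B}_s, Y^{r_1}_s]$ as a nonzero multiple of $A^{-1}_{s + r_1 + s^2}$ plus a $B^0_{r_1 + s^2}$-component. Since the formal basis style prioritizes $B^0$-elimination and $B^0_{r_1 + s^2} \in \RC^{r_1 + s^2, s+1}$ by Corollary \ref{Z0}, only the $A^{-1}_{s + r_1 + s^2}$-part is new, and its nonzero coefficient forces $\alpha_{r_1 + s^2 + s} = 0$. In the exceptional case $r_1 = s(s+1) + 2s$ the above $\mathcal{B}^s_s$-mechanism is entangled (via Corollary \ref{Bk}) with the level-$(s+1)$ reduction of $B^0_{r_1 - s}$; Remark \ref{Rem3.9} disentangles this and produces a composite kernel element whose action on $A^{-1}_{r_2}$ (for $r_2 < 2r_1 - 2s = 2s^2 + 4s$) yields $\alpha_{s + r_2 + s^2} = 0$.

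The principal technical difficulty is the bookkeeping required to verify that (i) the correction in Corollary \ref{ker} exhausts the new $A^{-1}$-contribution of every generic kernel pair, and (ii) the $\mathcal{B}^s_s$-coefficient produced by Lemma \ref{rss} — and its composite variant in Remark \ref{Rem3.9} — is genuinely nonzero. Both facts are already established in the preceding results, so the corollary follows by assembling them.
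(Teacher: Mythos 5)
Your proposal is correct and takes essentially the same route as the paper, whose own ``proof'' is just the one-line remark that the corollary follows from the preceding results --- Proposition \ref{s+1level}, Corollaries \ref{kerAk+1Z0} and \ref{ker}, Lemma \ref{rss} and Remark \ref{Rem3.9} --- which are exactly the ingredients you assemble in the same way. Two harmless imprecisions do not affect the argument: at grade \(r_1+s^2\) the level-\((s+1)\) removable space contains only the single combination \(A^{-1}_{r_1+s^2+s}+c^{0,s}_{m,n+2}B^{0}_{r_1+s^2}\) rather than \(B^{0}_{r_1+s^2}\) by itself, so the vanishing of \(\alpha_{r_1+s^2+s}\) really rests on Lemma \ref{rss}'s conclusion \(\C^{r_1+s^2,N}=\{0\}\) (which you do cite); and in the exceptional case \(r_1=s(s+1)+2s\) the degeneracy arises at the \emph{target} grade \(r_1+s^2=2s(s+1)+s\), already exhausted by \(\mathcal{B}^{2s}_{2s}\) at level \(s+1\), rather than at grade \(r_1-s\).
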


The rest of this section is devoted to proving that any further simplification in the \((r_2-s+1)\)-th level is not possible when \(r_2< 2{r_1}-2s\).

\begin{lem}\label{Ynm}
Assume that \(\mathcal{Y}^{n,r}_m\) is given by Lemma \ref{A-1Acal}. Then, there exist \(\theta^{r,s}_{m,n},\) \(\kappa^{r,s}_{m,n}\in \mathbb{R}\) and  \(\mathbb{Y}^{n, r}_m\in \LST\) such that
\bas
\Le[A^{-1}_r, \mathcal{Y}^{n,r}_m\Ri]+ \Le[\mathbb{B}_s, \mathbb{Y}^{n,r}_m\Ri]&=& \theta^{r,s}_{m,n}A^{-1}_{2r+m+ns-2s} + c^{2r,s}_{m,n-2}\Le(\theta^{r,s}_{m,n}+\kappa^{r,s}_{m,n}\Ri) B^{0}_{2r+m+ns-3s}.
\eas

\end{lem}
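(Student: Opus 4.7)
The plan is to proceed analogously to the proof of Lemma \ref{A-1Acal}, but with $\mathcal{Y}^{n,r}_m$ in place of $\mathcal{A}^n_m$. First I would compute $[A^{-1}_r, \mathcal{Y}^{n,r}_m]$ directly from the structure constants of Lemma \ref{StrCons}, applied term-by-term to the Hamiltonian--Eulerian decomposition of $\mathcal{Y}^{n,r}_m$ that is read off from the proof of Lemma \ref{A-1Acal}. A grading count gives $\delta\bigl([A^{-1}_r,\mathcal{Y}^{n,r}_m]\bigr)=2r+m+ns-3s$, so the output is a homogeneous polynomial vector field of this grade.

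Next I would apply Lemma \ref{AReduce} to every $A^{n'}_{m'}$ with $n'\geq 0$ and Lemma \ref{BReduce} to every $B^{n'}_{m'}$ with $n'\geq 1$ appearing in the expansion. Each such application produces a commutator of the form $[\mathbb{B}_s,\cdot]$ together with canonical-basis terms which, in grade $2r+m+ns-3s$, can only be $A^{-1}_{2r+m+ns-2s}$ or $B^{0}_{2r+m+ns-3s}$. Collecting the commutator parts defines the required $\mathbb{Y}^{n,r}_m$, while collecting the canonical parts produces two scalar coefficients. I would then set $\theta^{r,s}_{m,n}$ equal to the coefficient of $A^{-1}_{2r+m+ns-2s}$ (a rational function in Pochhammer $k$-symbols, analogous to $\zeta^{r,s,n}_{m,n}$) and factor the coefficient of $B^{0}_{2r+m+ns-3s}$ as $c^{2r,s}_{m,n-2}\bigl(\theta^{r,s}_{m,n}+\kappa^{r,s}_{m,n}\bigr)$; here $c^{2r,s}_{m,n-2}$ plays exactly the structural role that $c^{r,s}_{m,n}$ does in Lemma \ref{A-1Acal} (the systematic $B^0{:}A^{-1}$ ratio produced by Lemma \ref{AReduce} in this grade), while $\kappa^{r,s}_{m,n}$ absorbs the residual Eulerian correction, analogous to the subtraction $-\frac{(r+m+ns+2)(m)^{n}_{s}}{(m+2-n+s)^{n}_{s+1}}$ that appears inside $b^{r,s}_{m,n}$ in Lemma \ref{A-1Acal}.

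The main obstacle is the bookkeeping required to confirm that $c^{2r,s}_{m,n-2}$ really can be extracted cleanly from the $B^{0}$-coefficient, rather than that coefficient being an irreducible rational function of $r,s,m,n$. This amounts to telescoping identities of the same flavour as those used at the end of Lemma \ref{A-1Acal} to simplify $\zeta^{r,s,n}_{m,n}$, now applied to the doubly-indexed sums that arise from the explicit expansion of $\mathcal{Y}^{n,r}_m$ in the proof of Lemma \ref{A-1Acal}. Once this factorization is justified, existence of $\theta^{r,s}_{m,n}$, $\kappa^{r,s}_{m,n}$, and $\mathbb{Y}^{n,r}_m$ is immediate, and the closed-form expressions can be recorded for downstream use in the $(r_2-s+1)$-th level normalization and in the Maple implementation.
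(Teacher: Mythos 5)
Your reduction strategy---expand $[A^{-1}_r,\mathcal{Y}^{n,r}_m]$ via the structure constants and then push every $A^{n'}_{m'}$ ($n'\ge 0$) and $B^{n'}_{m'}$ ($n'\ge 1$) term into $\Span\{A^{-1}_{2r+m+ns-2s},B^0_{2r+m+ns-3s}\}$ modulo $[\mathbb{B}_s,\cdot]$ using Lemmas \ref{AReduce} and \ref{BReduce}---is exactly the computation the paper performs; its printed proof merely records that the resulting formulas for $\mathbb{Y}^{n,r}_m$, $\theta^{r,s}_{m,n}$ and $\kappa^{r,s}_{m,n}$ were derived explicitly but are too long to present. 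The one place you overestimate the difficulty is the factorization of the $B^0$-coefficient: since $\kappa^{r,s}_{m,n}$ is an otherwise unconstrained real number, writing that coefficient as $c^{2r,s}_{m,n-2}\left(\theta^{r,s}_{m,n}+\kappa^{r,s}_{m,n}\right)$ requires no telescoping identity at all, only that $c^{2r,s}_{m,n-2}\neq 0$ (which holds in the relevant range of indices), so $\kappa^{r,s}_{m,n}$ is simply defined as that quotient minus $\theta^{r,s}_{m,n}$.
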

\bpr The formulas for \(\mathbb{Y}^{n,r}_m,\) \(\theta^{r,s}_{m,n}\) and \(\kappa^{r,s}_{m,n}\) are explicitly derived. However, they are not presented here since they are too long.
\epr

\begin{cor} \label{corycalnn} Assume that \(m_1+n_1s=m_2+n_2s.\) Then, there exists a \(\mathbb{Y}^{n_1,n_2, r}_{m_{1, 2}}\) such that
\ba\label{ycalydouble}
\!\!\!&\Le[A^{-1}_r, \mathcal{Y}^{n_{1, 2}, r}_{m_{1, 2}}\Ri]+\Le[\mathbb{B}_s, \mathbb{Y}^{n_{1, 2}, r}_{m_{1, 2}}\Ri]=\!\!&\!\Le(\frac{\theta^{r,s}_{m_1,n_1}}{a^{s}_{m_1,n_1}}-\frac{\theta^{r,s}_{m_2,n_2}}{a^{s}_{m_2,n_2}}
\Ri)A^{-1}_{2r+m_1+n_1s-2s}
\\\nonumber\!\!\!&\!\!&\! + c^{2r,s}_{m_1,n_1-2}\Le(\frac{\theta^{r,s}_{m_1,n_1}}{a^{s}_{m_1,n_1}}-\frac{\theta^{r,s}_{m_2,n_2}}{a^{s}_{m_2,n_2}}
+\frac{\kappa^{r,s}_{m_1,n_1}}{a^{s}_{m_1,n_1}}
-\frac{\kappa^{r,s}_{m_2,n_2}}{a^{s}_{m_2,n_2}}\Ri)B^{0}_{2r+m_1+n_1s-3s}.
\ea

\end{cor}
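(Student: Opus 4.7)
The plan is to reduce the claim to Lemma \ref{Ynm} applied separately to the two summands defining $\mathcal{Y}^{n_{1, 2}, r}_{m_{1, 2}}$, and then to invoke the bilinearity of both $[\cdot,\cdot]$ and of the $\R$-module action on $\LST$. Recall from the proof of Corollary \ref{ZnDirect} that
\[
\mathcal{Y}^{n_{1, 2},r}_{m_{1, 2}}= \frac{1}{a^{s}_{{m_1},{n_1}}} \mathcal{Y}^{n_1,r}_{m_1}- \frac{1}{a^{s}_{{m_2},{n_2}}}\mathcal{Y}^{n_2,r}_{m_2},
\]
so $[A^{-1}_r,\mathcal{Y}^{n_{1, 2},r}_{m_{1, 2}}]$ splits into the difference of the two brackets $[A^{-1}_r,\mathcal{Y}^{n_j,r}_{m_j}]/a^{s}_{m_j,n_j}$ for $j=1,2$.

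Next, I would apply Lemma \ref{Ynm} to each of $\mathcal{Y}^{n_1,r}_{m_1}$ and $\mathcal{Y}^{n_2,r}_{m_2}$, obtaining homogeneous vector fields $\mathbb{Y}^{n_1,r}_{m_1},\mathbb{Y}^{n_2,r}_{m_2}\in\LST$ such that
\[
[A^{-1}_r,\mathcal{Y}^{n_j,r}_{m_j}]+[\mathbb{B}_s,\mathbb{Y}^{n_j,r}_{m_j}]=\theta^{r,s}_{m_j,n_j}A^{-1}_{2r+m_j+n_js-2s}+c^{2r,s}_{m_j,n_j-2}\bigl(\theta^{r,s}_{m_j,n_j}+\kappa^{r,s}_{m_j,n_j}\bigr)B^{0}_{2r+m_j+n_js-3s}.
\]
Define
\[
\mathbb{Y}^{n_1,n_2, r}_{m_{1, 2}}:= \frac{1}{a^{s}_{m_1,n_1}}\mathbb{Y}^{n_1,r}_{m_1}-\frac{1}{a^{s}_{m_2,n_2}}\mathbb{Y}^{n_2,r}_{m_2},
\]
and form the corresponding linear combination of the two identities above.

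The crucial observation that makes the right-hand sides combine cleanly is the grade-matching consequence of $m_1+n_1 s= m_2+n_2 s$: the $A^{-1}$-indices agree, $2r+m_1+n_1s-2s=2r+m_2+n_2s-2s$, and the $B^0$-indices agree, $2r+m_1+n_1s-3s=2r+m_2+n_2s-3s$. Moreover, inspecting the definition
\[
c^{r,s}_{m,n}=\frac{(s+2)\bigl(r+m+1+(n-1)s\bigr)}{\bigl(r+m+(n-2)s\bigr)^{2}_{s}},
\]
one sees that $c^{2r,s}_{m_j,n_j-2}$ depends only on the quantities $2r+m_j+(n_j-3)s$ and $2r+m_j+(n_j-4)s$, both of which are invariant under replacing $(m_1,n_1)$ by $(m_2,n_2)$ under our constraint. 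Hence $c^{2r,s}_{m_1,n_1-2}=c^{2r,s}_{m_2,n_2-2}$, and that common factor can be pulled out of the $B^{0}$-contribution, producing exactly Equation~\eqref{ycalydouble}.

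The only non-routine step is verifying the equality of the two $c^{2r,s}$-coefficients, which is the reason the Eulerian contribution factors as shown; everything else is bilinearity and direct substitution. Since Lemma \ref{Ynm} already provides the existence of each $\mathbb{Y}^{n_j,r}_{m_j}$ (its explicit form being suppressed in the statement), no additional computation of $\theta^{r,s}_{m_j,n_j}$ or $\kappa^{r,s}_{m_j,n_j}$ is needed to conclude the corollary.
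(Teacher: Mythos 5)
Your proposal is correct and follows essentially the same route as the paper: the paper's proof simply defines \(\mathbb{Y}^{n_{1, 2}, r}_{m_{1, 2}}:= \frac{1}{a^{s}_{m_1,n_1}} \mathbb{Y}^{n_1, r}_{m_1}- \frac{1}{a^{s}_{m_2,n_2}}\mathbb{Y}^{n_2, r}_{m_2}\) and invokes Lemma \ref{Ynm} with bilinearity. Your additional verification that \(c^{2r,s}_{m_1,n_1-2}=c^{2r,s}_{m_2,n_2-2}\) under \(m_1+n_1s=m_2+n_2s\) is a correct and worthwhile detail that the paper leaves implicit.
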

\bpr Let \(\mathbb{Y}^{n_{1, 2}, r}_{m_{1, 2}}:= \frac{1}{a^{s}_{{m_1},{n_1}}} \mathbb{Y}^{n_1, r}_{m_1}- \frac{1}{a^{s}_{{m_2},{n_2}}}\mathbb{Y}^{n_2, r}_{m_2}.\) Then, the proof follows from Lemma \ref{Ynm}.
\epr

We assume that there exists a \(\alpha_k\neq0\) for some \(k>r_1\) where \(\alpha_k\) stands for the \({(r_1-s+1)}\)-th level coefficients of \(A^{-1}_k\). Let \be\label{ri} r_2:=\min \{\alpha_k\,|\, k>r_1, \alpha_k\neq0\}.\ee

\begin{lem}\label{3rdlevl} Assume that \(r_2< 2{r_1}-2s\) and let \(m_1+n_1s=m_2+n_2s\) and \(m_3+n_3s= {r_1}+m_1+n_1s-2s.\) Then, there exists a state solution \(\mathcal{A}^{n_{1, 2, 3}, {r_{1,2}}}_{m_{1, 2, 3}}\) such that
\bas
&&
\pi_{r_2+m_1+n_1s-s}\left(\left[A^{-1}_{r_2},\mathcal{A}^{n_{1, 2}}_{m_{1, 2}}\right]+ Z^{n_{1, 2}}_{m_{1, 2}}A^{-1}_{r_2}
+\left[A^{-1}_{r_1},\mathcal{A}^{n_{1, 2, 3}, r_{1,2}}_{m_{1, 2, 3}}\right] +\left[\mathbb{B}_s,\mathcal{A}^{n_{1, 2, 3}, r_1}_{m_{1, 2, 3}}\right]
+ Z^{n_{1, 2, 3},r_1}_{m_{1, 2, 3}}\mathbb{B}_s\right)
\\
&=& \left(\frac{\zeta^{{r_2},s, n_1}_{{m_1},{n_1}}}{a^{{r_2},s}_{{m_1},{n_1}}}- \frac{\zeta^{{r_2},s,n_2}_{{m_2},{n_2}}}{a^{{r_2},s}_{{m_2},{n_2}}}\right)A_{{r_2}+{m_1}+{n_1}s}^{-1}+
\left(\frac{\zeta^{{r_2},s, n_1}_{{m_1},{n_1}}}{a^{{r_2},s}_{{m_1},{n_1}}}- \frac{\zeta^{{r_2},s,n_2}_{{m_2},{n_2}}}{a^{{r_2},s}_{{m_2},{n_2}}}\right)c^{0, s}_{{m_3},{n_3}+2}B_{{r_2}+{m_1}+{n_1}s-s}^0,\eas where \(\pi_{r_2+m_1+n_1s-s}\) denotes the projection on the \(\delta\)-homogeneous space of grade \(r_2+m_1+n_1s-s.\)
Besides, \(c^{{r_1},s}_{{m_3},{n_3}}=c^{2{r_1},s}_{m_1,n_1-2}=c^{0,s}_{{m_3},{n_3}+2}\).
\end{lem}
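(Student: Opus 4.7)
The plan is a grade-by-grade decomposition combined with an application of Corollary~\ref{ZnDirect} in which the base exponent $r$ is set to $r_2$ instead of $r_1$. First, I would catalogue the $\delta$-grades of the five summands. Taking $\mathcal{A}^{n_{1,2,3},r_{1,2}}_{m_{1,2,3}}$ to be $\delta$-homogeneous of grade $r_2-r_1+m_1+n_1 s$, the three brackets $[A^{-1}_{r_2},\mathcal{A}^{n_{1,2}}_{m_{1,2}}]$, $Z^{n_{1,2}}_{m_{1,2}}A^{-1}_{r_2}$, and $[A^{-1}_{r_1},\mathcal{A}^{n_{1,2,3},r_{1,2}}_{m_{1,2,3}}]$ all land in $\LST_{r_2+m_1+n_1 s-s}$, whereas $[\mathbb{B}_s,\mathcal{A}^{n_{1,2,3},r_1}_{m_{1,2,3}}]$ and $Z^{n_{1,2,3},r_1}_{m_{1,2,3}}\mathbb{B}_s$ live in $\LST_{r_1+m_1+n_1 s-s}$. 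Since $r_2>r_1$, the projection $\pi_{r_2+m_1+n_1 s-s}$ annihilates the last two summands, so only the first three contribute.

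Next, I would invoke Corollary~\ref{ZnDirect} with $r=r_2$; its proof is symbolic in $r$ and carries over unchanged. This yields a $\delta$-homogeneous state solution $\mathcal{Y}^{n_{1,2},r_2}_{m_{1,2}}$ of grade $r_2+m_1+n_1 s-2s$ such that $[A^{-1}_{r_2},\mathcal{A}^{n_{1,2}}_{m_{1,2}}]+Z^{n_{1,2}}_{m_{1,2}}A^{-1}_{r_2}+[\mathbb{B}_s,\mathcal{Y}^{n_{1,2},r_2}_{m_{1,2}}]$ equals the prescribed combination of $A^{-1}_{r_2+m_1+n_1 s}$ and $B^0_{r_2+m_1+n_1 s-s}$ carrying the stated coefficient $\frac{\zeta^{r_2,s,n_1}_{m_1,n_1}}{a^{r_2,s}_{m_1,n_1}}-\frac{\zeta^{r_2,s,n_2}_{m_2,n_2}}{a^{r_2,s}_{m_2,n_2}}$ on the $A$-term. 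Thus the first two summands of the projection already reproduce the right-hand side up to the residue $-[\mathbb{B}_s,\mathcal{Y}^{n_{1,2},r_2}_{m_{1,2}}]$, which must be absorbed by the third summand. I would then construct $\mathcal{A}^{n_{1,2,3},r_{1,2}}_{m_{1,2,3}}$ by paralleling the construction of $\mathcal{A}^{n_{1,2,3},r}_{m_{1,2,3}}$ in Corollary~\ref{ker}: because $\mathcal{Y}^{n_{1,2},r_2}_{m_{1,2}}$ is a difference of $\mathcal{Y}$- and $\mathcal{A}$-type generators, an analogous combination of those generators at grade $r_2-r_1+m_1+n_1 s$, bracketed with $A^{-1}_{r_1}$, reproduces via Lemma~\ref{A-1Acal} and the structure constants of Lemma~\ref{StrCons} exactly the image $[\mathbb{B}_s,\mathcal{Y}^{n_{1,2},r_2}_{m_{1,2}}]$ together with controlled $A^{-1}_{r_2+m_1+n_1 s}$ and $B^0_{r_2+m_1+n_1 s-s}$ contributions. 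Fixing its scalar multiple so that the $[\mathbb{B}_s,\cdot]$-residue cancels, the leftover $A$- and $B$-pieces fold into the final leading coefficients.

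The main obstacle is the coefficient identity $c^{r_1,s}_{m_3,n_3}=c^{2r_1,s}_{m_1,n_1-2}=c^{0,s}_{m_3,n_3+2}$. The first equality is routine: by the Pochhammer formula in Lemma~\ref{A-1Acal}, $c^{r,s}_{m,n}$ depends on its indices only through the sum $r+m+ns$, and the constraint $m_3+n_3 s=r_1+m_1+n_1 s-2s$ places both sides at the common value $2r_1+m_1+n_1 s-2s$. The equality with $c^{0,s}_{m_3,n_3+2}$ encodes the precise contribution of $[A^{-1}_{r_1},\mathcal{A}^{n_{1,2,3},r_{1,2}}_{m_{1,2,3}}]$ to the $B^0$-coefficient, and its verification is the main bookkeeping burden of the proof; the hypothesis $r_2<2r_1-2s$ is precisely what prevents any higher-grade kernel element from interfering with this single-step computation.
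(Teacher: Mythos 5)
Your overall skeleton is right — the projection argument plus Corollary \ref{ZnDirect} applied with \(r=r_2\) is indeed the engine of the paper's proof, and your grade bookkeeping for the terms \([\mathbb{B}_s,\mathcal{A}^{n_{1,2,3},r_1}_{m_{1,2,3}}]\) and \(Z^{n_{1,2,3},r_1}_{m_{1,2,3}}\mathbb{B}_s\) is correct. But there is a genuine gap in how you absorb the residue \([\mathbb{B}_s,\mathcal{Y}^{n_{1,2},r_2}_{m_{1,2}}]\). You propose to manufacture it as \([A^{-1}_{r_1},X]\) for a homogeneous \(X\) of grade \(r_2-r_1+m_1+n_1s\), asserting that a combination of \(\mathcal{Y}\)- and \(\mathcal{A}\)-type generators bracketed with \(A^{-1}_{r_1}\) "reproduces exactly" that image. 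Nothing in the paper supports this: the operator \(\ad_{A^{-1}_{r_1}}\) is not surjective on a graded piece of \(\LST\), and every reduction lemma in the paper (Lemmas \ref{AReduce}, \ref{BReduce}, \ref{A-1Acal}, \ref{Ynm}) inverts only \(\ad_{\mathbb{B}_s}\), never \(\ad_{A^{-1}_{r_1}}\); there is no reason the specific element \([\mathbb{B}_s,\mathcal{Y}^{n_{1,2},r_2}_{m_{1,2}}]\), modulo \(A^{-1}_{r_2+m_1+n_1s}\) and \(B^0_{r_2+m_1+n_1s-s}\), lies in the image of \(\ad_{A^{-1}_{r_1}}\) at that grade. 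The paper's actual construction, Equation \eqref{Acal123}, puts \(\mathcal{Y}^{n_{1,2},r_2}_{m_{1,2}}\) itself (of grade \(r_2+m_1+n_1s-2s\), not \(r_2-r_1+m_1+n_1s\); the two coincide only when \(r_1=2s\), which is excluded) into the new generator \(\mathcal{A}^{n_{1,2,3},r_{1,2}}_{m_{1,2,3}}\), so that the correction enters through the bracket with \(\mathbb{B}_s\) — i.e., through the generator acting on the lowest-grade part \(v_s\) of the normal form, exactly as in Corollary \ref{ZnDirect} with \(r=r_2\). The remaining pieces \(\mathbb{Y}^{n_{1,2},r_1}_{m_{1,2}}\) and \(\mathcal{Y}^{n_3,r_1}_{m_3}\) of \eqref{Acal123} sit at grade \(2r_1+m_1+n_1s-4s\) and only produce output at grade \(2r_1+m_1+n_1s-3s\), which the hypothesis \(r_2<2r_1-2s\) keeps strictly above the projection grade; this, not the single-step interference you describe, is where that hypothesis is used. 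So the new generator is deliberately inhomogeneous, and the correction is a \(\mathbb{B}_s\)-bracket, not an \(A^{-1}_{r_1}\)-bracket.

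A secondary point: you verify \(c^{r_1,s}_{m_3,n_3}=c^{2r_1,s}_{m_1,n_1-2}\) correctly (both equal \(\phi(2r_1+m_1+n_1s-2s)\) where \(c^{r,s}_{m,n}\) depends only on \(r+m+ns\)), but you explicitly defer the equality with \(c^{0,s}_{m_3,n_3+2}\), so that part of the statement remains unproven in your write-up; since the displayed \(B^0\)-to-\(A^{-1}\) coefficient ratio is what Corollary \ref{ZnDirect} with \(r=r_2\) actually delivers, this is not mere bookkeeping and should be confronted rather than postponed.
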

\bpr Define
\ba\label{Acal123}
\mathcal{A}^{n_{1, 2, 3}, r_{1,2}}_{m_{1, 2, 3}}&:=&  \mathcal{Y}^{n_{1, 2},{r_2}}_{m_{1, 2}}+\mathbb{Y}^{n_{1, 2}, {r_1}}_{m_{1, 2}}- \frac{\frac{\zeta^{{r_1},s,n_1}_{{m_1},{n_1}}}{a^{{r_1},s}_{{m_1},{n_1}}}- \frac{\zeta^{{r_1},s,n_2}_{{m_2},{n_2}}}{a^{{r_1},s}_{{m_2},{n_2}}}}{a^{{r_1},s}_{{m_3},{n_3}}}\mathcal{Y}^{n_3,{r_1}}_{m_3}.
\ea
Since \(r_2< 2r_1-2s,\) \(\delta\Le(\Le[A^{-1}_{r_2}, \mathcal{A}^{n_{1, 2}}_{m_{1, 2}}\Ri]\Ri)=r_2+m_1+n_1s-s,\) and
\bes \delta\Le(\Le[A^{-1}_{r_1}, \mathcal{Y}^{n_{1, 2},{r_1}}_{m_{1, 2}}\Ri]\Ri)= \delta\Le(\Le[\mathbb{B}_s, \mathbb{Y}^{n_{1, 2},r_1}_{m_{1, 2}}\Ri]\Ri)=2{r_1}+m_1+n_1s-3s,\ees
 the proof follows from Corollary \ref{ZnDirect}.
\epr

\begin{thm}
The generalized saddle-node case system of Bogdanov--Takens given by (\ref{eq1}) can be transformed into its orbital normal form
\bas
\dot{x}&=& xy^{s}+ \alpha_{r_1}y^{{r_1}+1}+ \sum_{i>{r_1}} \alpha_i y^{i+1},
\\
\dot{y}&=&-x+ y^{s+1}.
\eas Here, \(\alpha_{k(s+1)+2s}=0\) for any natural number \(k\neq s\), \(\alpha_{{r_1}+s^2+s}=0\), and \(r_1\neq k(s+1)+2s\) for any \(k\neq s.\) Furthermore, for \(r_1= s(s+1)+2s\) and \(r_2<2s^2+4s\) we have \(\alpha_{s+r_2+{s}^{2}}=0\).
\end{thm}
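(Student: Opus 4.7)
The plan is to assemble the orbital normal form level by level via the hypernormalization framework of Theorem \ref{NthONF}, starting from the classical normal form of Lemma \ref{cl} (after the rescaling that sets $b_s=1$, so the grade-$s$ leading part equals $\mathbb{B}_s$). By Lemmas \ref{AReduce}--\ref{BReduce}, every transformation generator $A^n_m$ or $B^n_m$ can be moved modulo $\ad_{\mathbb{B}_s}$ into $\Span\{A^{-1}_p, B^0_q\}$, so the computation at each grade $n$ reduces to identifying which linear combinations of $A^{-1}_p$ and $B^0_q$ already lie in $\RC^{n,N}$ under the chosen style, which prioritizes elimination of $B^0$-terms over $A^{-1}$-terms.

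First I would invoke Proposition \ref{s+1level} to obtain the $(s+1)$-th level normal form. The kernel pairs $(\mathcal{A}^0_m,(m+1)Z^0_m)$ of Lemma \ref{ZnmXs} kill every $B^0_{n-s}$ for $n>s$, and together with the Eulerian symmetries $(\mathcal{B}^k_k,0)$ for $k\neq s$ coming from Corollary \ref{Bk}, they also eliminate every $A^{-1}_{k(s+1)+2s}$ with $k\neq s$. Remark \ref{updatedr1} records the adjustment of $r_1$ produced by these reductions, so that the surviving coefficients $\alpha_i$ of $A^{-1}_i$ satisfy the stated vanishing conditions at indices $k(s+1)+2s$ and the updated $r_1$ itself avoids these indices for $k\ne s$.

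Next I would pass to the $(r_1-s+1)$-th level and apply Corollary \ref{r1s1}. The enlarged kernel now includes the pairs $(\mathcal{A}^{n_{1,2}}_{m_{1,2}}, Z^{n_{1,2}}_{m_{1,2}})$ of Corollary \ref{kerAk+1Z0} and, in the exceptional case $r_1=s(s+1)+2s$, also the triple displayed in \eqref{3.13}; these act nontrivially on the newly available $A^{-1}_{r_1}$ via Corollaries \ref{ZnDirect}--\ref{ker}, and in particular they remove the coefficient $\alpha_{r_1+s^2+s}$. When $r_1=s(s+1)+2s$ and $r_2<2s^2+4s$, the $(r_2-s+1)$-th level handled by Lemma \ref{3rdlevl} further kills $\alpha_{s+r_2+s^2}$ through the composite solution defined in \eqref{Acal123}. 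Taking the inductive limit then follows from Theorem \ref{NthONF}, which supplies the change of variables that realizes $v^{(\infty)}$ with $v_k\in \C^{k,k}$ for every $k$.

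Finally I would translate the Lie-algebraic expression $A^1_0+B^0_s+\alpha_{r_1}A^{-1}_{r_1}+\sum_{i>r_1}\alpha_i A^{-1}_i$ into vector field form using Equations \eqref{Alk}--\eqref{Blk}: namely $A^1_0=-x\ddy$, $B^0_s=xy^s\ddx+y^{s+1}\ddy$, and $A^{-1}_k=y^{k+1}\ddx$. This yields precisely the two displayed equations for $\dot x$ and $\dot y$. The main obstacle I expect is the bookkeeping across the three levels: one must verify that the kernel descriptions at grades below $r_1-s+1$, at $r_1-s+1$, and (when applicable) at $r_2-s+1$ are exhaustive, so that no additional reduction is overlooked, and that the exceptional indices $k(s+1)+2s$ and the resonance $r_1=s(s+1)+2s$ are handled consistently as the level increases.
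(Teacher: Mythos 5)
Your proposal follows essentially the same route as the paper: the paper's own proof is the one-line observation that the theorem follows from Corollary \ref{r1s1} and Lemma \ref{3rdlevl}, and your level-by-level assembly (classical normal form, reduction to \(\Span\{A^{-1}_p,B^0_q\}\) via Lemmas \ref{AReduce}--\ref{BReduce}, Proposition \ref{s+1level} at level \(s+1\), Corollary \ref{r1s1} at level \(r_1-s+1\), translation of \(A^1_0,B^0_s,A^{-1}_k\) into coordinates) is exactly the chain of results those two citations rest on.

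One attribution in your last step is off, and it is worth correcting because it inverts the logical role of Lemma \ref{3rdlevl}. You state that the vanishing \(\alpha_{s+r_2+s^2}=0\) (for \(r_1=s(s+1)+2s\), \(r_2<2s^2+4s\)) is achieved at the \((r_2-s+1)\)-th level ``through the composite solution defined in \eqref{Acal123}.'' In the paper this elimination already happens at the \((r_1-s+1)\)-th level: it is carried out by the kernel element displayed in Equation \eqref{3.13} of Remark \ref{Rem3.9}, built from \(\mathcal{B}^s_s\), \(\mathcal{A}^0_{r_1+s^2-s}\) and \(Z^0_{r_1+s^2-s}\), and it is recorded as part of Corollary \ref{r1s1}. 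Lemma \ref{3rdlevl} does the opposite job: it shows that at the \((r_2-s+1)\)-th level the kernel elements \((\mathcal{A}^{n_{1,2}}_{m_{1,2}},Z^{n_{1,2}}_{m_{1,2}})\) reproduce the same fixed ratio \(c^{0,s}_{m_3,n_3+2}\) between the resulting \(A^{-1}\)- and \(B^0\)-terms that is already available from Lemma \ref{ZnmXs}, so that no \emph{additional} coefficient beyond those listed can be removed when \(r_2<2r_1-2s\). That stability statement is what justifies calling the displayed system ``the'' orbital normal form, and your write-up omits it while crediting the lemma with an elimination it does not perform. The vanishing conditions themselves are all covered by Corollary \ref{r1s1}, which you do invoke, so the conclusion stands; only the role assigned to Lemma \ref{3rdlevl} needs to be repaired.
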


\bpr The proof readily follows from Lemma \ref{3rdlevl} and Corollary \ref{r1s1}.
\epr

\section{The simplest normal form}\label{sec4}

The formulas obtained in Section \ref{sec3} are enough (along with Lemma \ref{Bs2}) to readily reproduce the corresponding results already obtained by Baider and Sanders \cite{baidersanders}. Define \(s_1:= s\). Let \(A:= \LST.\) Hence, Equation (\ref{LnN}) is governed by
\bas
d^{n,N}\Le(S_{n-N+1}, \ldots, S_{n-s_1}\Ri)&:=&\sum^{N-1}_{k=s_1} [v_{k}, S_{n-k}], \;\;\; S_i\in \LST_i.
\eas Then, Lemmas \ref{AReduce}--\ref{BReduce} imply that \(v\) given by Equation (\ref{eq1}) can be transformed into the \((s+1)\)-th level normal form
\bas
v^{({s_1}+1)}= A^1_0+ \sum^\infty_{i=r_1} a_iA^{-1}_i+ \sum^\infty_{j={s_1}} b_jB^0_j,
\eas where \(b_{m+(m+1){s_1}}=0\) for \(m\neq {s_1}\) and \(b_{m+(m+2){s_1}}=0\) for any natural number \(m.\) Assume that there exists \(b_j\neq0\) for some \(j>s_1.\) Define \(s_2:= \min \{b_j\,|\, b_j\neq 0, j>{s_1} \}.\)
\begin{lem} \label{Bs2}
There exists a \(\mathcal{B}^{s_1}_{{s_1}, s_2}\) such that
\([B^0_{s_2}, \mathcal{B}^{s_1}_{s_1}]+ [\mathbb{B}_{s_1},\mathcal{B}^{s_1}_{{s_1}, s_2}]= \frac {(s_2)^{{s_1}}_{{s_1}}}{(s_2+1)^{s_1}_{{s_1}+1}}B^0_{{s_1}+{s_2}+{s_1}^2}.\)
\end{lem}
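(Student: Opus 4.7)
The plan is to reduce the statement directly to Lemma~\ref{BReduce}. My first step would be to simplify $\mathcal{B}^{s_1}_{s_1}$: in the notation of Section~\ref{sec4} we have $s_1 = s$, so in the defining series \eqref{Bcal} each Pochhammer factor $(s_1-s)^{l}_{s} = (0)^{l}_{s}$ with $l \ge 1$ begins with a zero and therefore vanishes; only the $l=0$ term survives, giving $\mathcal{B}^{s_1}_{s_1} = B^{s_1}_{s_1}$. Consequently the outer bracket $[B^0_{s_2},\mathcal{B}^{s_1}_{s_1}]$ collapses to a single $B$-term.

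Next I would evaluate that bracket using the third structure constant in Lemma~\ref{StrCons}, namely $[B^l_k,B^n_m] = (m-k)B^{n+l}_{k+m}$, which yields
\[
[B^0_{s_2},\mathcal{B}^{s_1}_{s_1}] \;=\; [B^0_{s_2},B^{s_1}_{s_1}] \;=\; (s_1 - s_2)\,B^{s_1}_{s_1 + s_2},
\]
a pure $B^{s_1}_{s_1+s_2}$ of grade $s_1 + s_2 + s_1^2$, with no $A^{-1}$ contribution. Since only a single Baider--Sanders $B^{n}_{m}$-type term appears, the problem is now exactly in the form handled by Lemma~\ref{BReduce}.

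The third step is to invoke Lemma~\ref{BReduce} with the parameter choice $n = s_1$, $m = s_1 + s_2$, and ambient $s = s_1$. Under these substitutions the numerator becomes $(m-s)^{n}_{s} = (s_2)^{s_1}_{s_1}$, the denominator becomes $(m-n+1)^{n}_{s+1} = (s_2+1)^{s_1}_{s_1+1}$, and the target index is $m + ns = s_1 + s_2 + s_1^2$, so the lemma provides a $\delta$-homogeneous $\mathfrak{B}^{s_1}_{s_1+s_2}\in \LST_{s_2}$ for which
\[
B^{s_1}_{s_1 + s_2} + [\mathbb{B}_{s_1},\mathfrak{B}^{s_1}_{s_1+s_2}] \;=\; \frac{(s_2)^{s_1}_{s_1}}{(s_2+1)^{s_1}_{s_1+1}}\,B^0_{s_1 + s_2 + s_1^2}.
\]
Taking $\mathcal{B}^{s_1}_{s_1,s_2}$ to be the appropriate scalar multiple of $\mathfrak{B}^{s_1}_{s_1+s_2}$ so that the induced $[\mathbb{B}_{s_1},\mathcal{B}^{s_1}_{s_1,s_2}]$ exactly cancels the residual $(s_1-s_2)\,B^{s_1}_{s_1+s_2}$ produced in step two, the two displays combine to give the advertised identity.

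\textbf{Expected obstacle.} There is no genuine conceptual obstacle beyond bookkeeping, because all the needed algebraic machinery is already in place (Lemma~\ref{StrCons} for the bracket, Lemma~\ref{BReduce} for reducing $B^n_m$ modulo $[\mathbb{B}_s,\cdot]$ to a pure $B^0_\ast$). The only point requiring care is matching the Pochhammer factors so that the coefficient of $B^0_{s_1 + s_2 + s_1^2}$ emerges exactly in the form $(s_2)^{s_1}_{s_1}/(s_2+1)^{s_1}_{s_1+1}$, and tracking how the prefactor $(s_1-s_2)$ coming from $[B^0_{s_2},B^{s_1}_{s_1}]$ is absorbed into the choice of $\mathcal{B}^{s_1}_{s_1,s_2}$ rather than appearing on the right-hand side.
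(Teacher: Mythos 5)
Your proposal is correct and takes essentially the same route as the paper's own one-line proof: evaluate $[B^0_{s_2},\mathcal{B}^{s_1}_{s_1}]$ via the structure constants of Lemma~\ref{StrCons} (your explicit remark that $\mathcal{B}^{s_1}_{s_1}=B^{s_1}_{s_1}$, since $(0)^l_s=0$ for $l\geq 1$ in \eqref{Bcal}, is left implicit in the paper, and your coefficient $(s_1-s_2)$ is the one consistent with $[B^l_k,B^n_m]=(m-k)B^{n+l}_{k+m}$, whereas the paper writes $(s_2-s_1)$), and then apply Lemma~\ref{BReduce} with $n=s_1$, $m=s_1+s_2$. The only caveat, shared equally by the paper's proof, is that taking $\mathcal{B}^{s_1}_{s_1,s_2}=(s_1-s_2)\mathfrak{B}^{s_1}_{s_1+s_2}$ actually produces $(s_1-s_2)$ times the displayed coefficient of $B^0_{s_1+s_2+s_1^2}$; this nonzero scalar (recall $s_2>s_1$) is immaterial for the ensuing normal form argument.
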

\bpr We have \([B^0_{s_2}, \mathcal{B}^{s_1}_{s_1}]= (s_2-{s_1}) B^{s_1}_{{s_1}+s_2}\). Lemma \ref{BReduce} completes the proof.
\epr

The following theorem restates the unique normal form obtained by Baider and Sanders \cite{baidersanders}.

\begin{thm}
Any system associated with \(v\) given by Equation (\ref{eq1}) with the condition \(2{s_1}<r_1\) can be transformed with a near-identity state transformation into the simplest normal form
\bas
\frac{dx}{dt}&=& \sum^\infty_{i=r_1} \alpha_iy^{i+1}+ \sum^\infty_{j={s_1}} \beta_jxy^{j},
\\
\frac{dy}{dt}&= &-x+\sum^\infty_{j={s_1}} \beta_jy^{j+1},
\eas where \(\beta_{m+(m+1){s_1}}=0\) for \(m\neq {s_1}\), \(\beta_{m+(m+2){s_1}}=0\) for any natural number \(m,\) and
\begin{itemize}
  \item \(\beta_{r_1+{s_1}^2}=0\) for \(r_1-{s_1}<s_2.\)  Furthermore, \(\alpha_{r_1+{s_1}+{s_1}^2}=0\) if \(r_1= m+(m+2){s_1}-{s_1}^2.\)
  \item \(\beta_{{s_1}+s_2+{s_1}^2}=0\) when \(r_1-{s_1}\geq s_2.\) Furthermore, \(\alpha_{m+(m+3){s_1}}=0\) for \(s_2= m+(m+2){s_1}-{s_1}-{s_1}^2.\)
\end{itemize}
\end{thm}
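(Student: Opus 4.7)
The plan is to apply the hypernormalization framework of Sections \ref{sec3}--\ref{sec4}, specialized to $A := \LST$ (pure state transformations, no time rescaling), in two levels past the classical normal form.

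At the $(s_1+1)$-th level, I would start from $v^{(1)} = A^1_0 + \sum_{i\geq r_1} a_i A^{-1}_i + \sum_{j\geq s_1} b_j B^0_j$ of Lemma \ref{cl}. Lemmas \ref{AReduce} and \ref{BReduce} immediately guarantee that every $A^n_m$ ($n\geq 0$) and every $B^n_m$ ($n\geq 1$) is, modulo $\IM[\mathbb{B}_{s_1},\cdot\,]$, a scalar combination of $A^{-1}_{m+(n+1)s_1}$ and $B^0_{m+ns_1}$, so $\C^{\cdot,s_1+1} \subseteq \Span\{A^{-1}_p, B^0_q\}$. Under the style prioritising removal of $B^0$, inspecting when the Pochhammer scalar $\tfrac{(m-s_1)^n_{s_1}}{(m-n+1)^n_{s_1+1}}$ of Lemma \ref{BReduce} is nonzero — together with Corollary \ref{Bk}, which isolates the single obstruction $\mathcal{B}^{s_1}_{s_1}$ at the grades $k(s_1+1)+s_1$ with $k=s_1$ — yields the vanishings $\beta_{m+(m+1)s_1}=0$ for $m\neq s_1$ and $\beta_{m+(m+2)s_1}=0$ for every natural $m$. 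The two families correspond to the removability of $B^0_{m+(m+1)s_1}$ via (a suitable telescoping of) $\mathcal{B}^m_m$ and of $B^0_{m+(m+2)s_1}$ via $\mathfrak{B}^1_{m+(m+1)s_1}$ respectively.

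For the next level, set $s_2 := \min\{j>s_1 \mid b_j \neq 0\}$ and compare the $\delta$-grades $r_1-s_1$ of $A^{-1}_{r_1}$ and $s_2$ of $B^0_{s_2}$. In \emph{Case 1} ($r_1-s_1<s_2$), $A^{-1}_{r_1}$ dominates; I would apply Lemma \ref{rss} with $r := r_1$ to obtain $[A^{-1}_{r_1},\mathcal{B}^{s_1}_{s_1}]+[\mathbb{B}_{s_1},Y^{r_1}_{s_1}] = \tfrac{(r_1+s_1)^{s_1}_{s_1}}{(r_1+3)^{s_1}_{s_1+1}} A^{-1}_{s_1+r_1+s_1^2} + f_{r_1,s_1} B^0_{r_1+s_1^2}$, which generically eliminates $\beta_{r_1+s_1^2}$. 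In the arithmetic subcase $r_1 = m+(m+2)s_1 - s_1^2$ the index $r_1+s_1^2$ equals $m+(m+2)s_1$, a grade where the $(s_1+1)$-th level analysis already supplies an independent generator in $\Span\{A^{-1}_{r_1+s_1+s_1^2},B^0_{r_1+s_1^2}\}$; combined with $\mathcal{B}^{s_1}_{s_1}$ these two generators span the whole two-dimensional space, freeing $\alpha_{r_1+s_1+s_1^2}$ to be eliminated as well. \emph{Case 2} ($r_1-s_1\geq s_2$) is handled by Lemma \ref{Bs2}, which directly provides $[B^0_{s_2},\mathcal{B}^{s_1}_{s_1}]+[\mathbb{B}_{s_1},\mathcal{B}^{s_1}_{s_1,s_2}] = \tfrac{(s_2)^{s_1}_{s_1}}{(s_2+1)^{s_1}_{s_1+1}} B^0_{s_1+s_2+s_1^2}$ with a manifestly nonzero coefficient, eliminating $\beta_{s_1+s_2+s_1^2}$; and the parallel subcase $s_2 = m+(m+2)s_1 - s_1 - s_1^2$ yields $\alpha_{m+(m+3)s_1}=0$ by tracking the $A^{-1}$-component produced in tandem, exactly as in the $r_2$-style argument of Lemma \ref{3rdlevl}.

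The main obstacle is the bookkeeping in the two special arithmetic subcases: verifying that the coefficient produced by $\mathcal{B}^{s_1}_{s_1}$ acting through the newly available primary term is genuinely nonzero at the appropriate grade, and determining precisely when an auxiliary $A^{-1}$-coefficient becomes simultaneously eliminable because the coupled $B^0$-coefficient is already trivially zero from the previous level. The conditions $r_1 = m+(m+2)s_1 - s_1^2$ and $s_2 = m+(m+2)s_1 - s_1 - s_1^2$ encode exactly these coincidences. Apart from this, every computation is a direct specialization of the orbital-case analysis in Section \ref{sec3} with the time-rescaling generators $Z^n_m$ set to zero, so no genuinely new analytical input is required.
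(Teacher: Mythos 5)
Your overall architecture matches the paper's (whose proof is essentially a citation of Equation (\ref{kerG}) together with Lemmas \ref{AReduce}, \ref{BReduce}, \ref{Bs2} and \ref{ZnmXs}), and your second‑level analysis --- Lemma \ref{rss} when \(r_1-s_1<s_2\), Lemma \ref{Bs2} when \(r_1-s_1\geq s_2\), and the two‑generator span argument in the arithmetic subcases --- is the intended one. There is, however, a concrete gap at the \((s_1+1)\)-th level: the generator you invoke for \(\beta_{m+(m+2)s_1}=0\), namely \(\mathfrak{B}^1_{m+(m+1)s_1}\), cannot do that job. Lemma \ref{BReduce} gives \([\mathbb{B}_{s_1},\mathfrak{B}^1_{m+(m+1)s_1}]=c\,B^0_{m+(m+2)s_1}-B^1_{m+(m+1)s_1}\), which is not supported on \(\Span\{A^{-1}_p,B^0_q\}\); it only identifies \(B^1_{m+(m+1)s_1}\) with a multiple of \(B^0_{m+(m+2)s_1}\) in the quotient and removes nothing from the complement space. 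Likewise, checking when the Pochhammer scalar of Lemma \ref{BReduce} vanishes tells you which \(B^n_m\) lie entirely in \(\IM \ad_{\mathbb{B}_{s_1}}\), not which \(B^0_q\) do. The generator actually responsible is the pure‑state kernel element \(\mathcal{A}^{m+1}_m\): in Equation (\ref{kerG}) the pair \(\big(\mathcal{A}^l_k,(k-l+1)Z^l_k\big)\) has vanishing time component precisely when \(l=k+1\), so \(\mathcal{A}^{m+1}_m\) survives the restriction to \(A=\LST\), and Lemma \ref{ZnmXs} with \(n=m+1\) yields \([\mathbb{B}_{s_1},\mathcal{A}^{m+1}_m]=a^{s_1}_{m,m+1}\big(A^{-1}_{m+(m+3)s_1}+c^{0,s_1}_{m,m+3}B^0_{m+(m+2)s_1}\big)\), which under the \(B\)-first style removes \(B^0_{m+(m+2)s_1}\).

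This omission propagates to both \emph{Furthermore} clauses, since each rests on your claim that at grade \(m+(m+2)s_1\) the previous level ``already supplies an independent generator'' in \(\Span\{A^{-1}_{m+(m+3)s_1},B^0_{m+(m+2)s_1}\}\). That generator is exactly \([\mathbb{B}_{s_1},\mathcal{A}^{m+1}_m]\) above; with \(\mathfrak{B}^1_{m+(m+1)s_1}\) in its place there is no second vector in that two‑dimensional space, and the eliminations of \(\alpha_{r_1+s_1+s_1^2}\) (resp.\ \(\alpha_{m+(m+3)s_1}\)) do not follow. Once you replace your generator by \(\mathcal{A}^{m+1}_m\) and verify linear independence against the output of Lemma \ref{rss} --- i.e.\ that \(f_{r_1,s_1}\neq c^{0,s_1}_{m,m+3}\,(r_1+s_1)^{s_1}_{s_1}/(r_1+3)^{s_1}_{s_1+1}\) in the first subcase (in the second subcase Lemma \ref{Bs2} produces a pure \(B^0\) multiple, so independence is automatic) --- the argument closes and agrees with the paper's.
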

\bpr
The proof is straightforward by Equation (\ref{kerG}), Lemmas \ref{AReduce}, \ref{BReduce}, \ref{Bs2} and \ref{ZnmXs}.
\epr

\section{Parametric normal form}\label{sec5}

This section is devoted to the computation of parametric normal form for the generalized saddle-node case of Bogdanov--Takens singularity.
Parametric changes of state variables, parametric time rescaling and reparametrization are all needed for parametric normal form computation. For a detailed study of parametric normal forms and unfolding see \cite{GazorMokhtariInt,GazorMokhtariEul,GazorYuSpec,Murd98,MurdBook,Murd04}. Let \(\PM:= \mathbb{R}[[\mu]]^p\) denote the \(p\)-dimensional vector formal power series in terms of \(\mu=(\mu_1, \mu_2, \ldots, \mu_p),\) \(F:=\mathbb{R}[[\mu]],\) and \(A:= \LST\oplus\R\oplus \PM\); recall Equations \eqref{R} and \eqref{L}. We define a grading structure on the space \(\PM\) by the grading function
\bes\delta(\mu^\m):=(r_1+2)|\m|.\ees
Hence, for any \(S_i\in \LST_i, T_i\in \R_i,\) and \(P_i\in \PM,\) Equation (\ref{LnN}) is given by
\bas
d^{n, N+1}\Le(S_{n-N}, T_{n-N}, P_{n-N},\ldots, S_{n-s}, T_{n-s}, P_{n-s}\Ri)&:=&\sum^{N}_{k=s} \Le(\Le[v_{k}, S_{n-k}\Ri]+ T_kv_{n-k}+D_\mu (v_{n-k})P_k\Ri),
\eas where \(D_\mu\) denotes derivative with respect to \(\mu.\) We call a parametric vector field \(w(x, y, \mu)\) a parametric
deformation of \(v(x, y)\) given by Equation (\ref{eq1}), if
\be\label{eq51}
w(x, y, \0):= v(x, y), \ \ v(0, 0, \0)=0,
\ee and \(\mu:= (\mu_1, \mu_2, \ldots, \mu_p)\) for some \(p\in \N\).

\begin{lem}
By invertible parametric state transformations any parametric deformation of Equation (\ref{eq1}) can be transformed into
\be\label{Pclas}
v(x, y, \mu):=A^1_0a(\mu)+ \sum^\infty_{k=-1} a_k(\mu)A^{-1}_k+ \sum^\infty_{k=0} b_k(\mu)B^{0}_k,
\ee where \(a(0)= 1\) and \(a_{-1}(0)= a_0(0)= b_0(0)=0.\)
\end{lem}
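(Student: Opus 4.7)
The plan is to extend the proof of Lemma \ref{cl} to the parametric setting by treating \(\mu\) as additional formal parameters and working over the coefficient ring \(F := \mathbb{R}[[\mu]]\). The \(\Sl\)-representation theory underlying Lemma \ref{cl} and the structure constants in Lemma \ref{StrCons} are purely algebraic, so every \(\ad\)-reduction step that succeeds over \(\mathbb{R}\) succeeds identically over \(F\); the proof therefore amounts to running the classical argument with a coefficient ring that retains the \(\mu\)-dependence.

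First I would dispose of the zeroth- and first-order parts of \(w(x,y,\mu)\). Since \(v(0,0,\0)=0\), the value \(w(0,0,\mu)\) has the form \(c_1(\mu)A^{-1}_{-1}+c_2(\mu)A^{0}_{-1}\) with \(c_i(\0)=0\); a parametric translation \((x,y)\mapsto (x+\psi(\mu),y+\phi(\mu))\) with \(\psi(\0)=\phi(\0)=0\) can be chosen by the implicit function theorem applied over \(F\) to annihilate the \(A^{0}_{-1}\)-component while leaving the constant in \(\dot{x}\) as the desired \(a_{-1}(\mu)A^{-1}_{-1}\). The linear part then decomposes in the \(\Sl\)-basis as \(a(\mu)A^{1}_{0}+a_0(\mu)A^{-1}_0+\gamma(\mu)A^{0}_0+b_0(\mu)B^{0}_0\) with \(a(\0)=1\) and the remaining coefficients vanishing at \(\mu=\0\); a parametric linear state change close to the identity kills the \(A^{0}_{0}\)-component, the nondegeneracy of \(\ad_{A^1_0}\) on the trace-free linear vector fields at \(\mu=\0\) making the homological equation solvable over \(F\).

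Next I would run the homological induction grade-by-grade using the \(\delta\)-grading extended to include parameters. By Lemma \ref{StrCons} and the \(\Sl\)-decomposition, each graded piece \(\LST_n\) splits as \(\IM\ad_M\oplus\ker\ad_M\), with \(\ker\ad_M\) spanned by those \(A^{-1}_k\) and \(B^{0}_k\) that lie in \(\LST_n\). Near-identity parametric state transformations \(\exp(\ad_{Y_n})\) with \(Y_n\) chosen in an \(F\)-span of a fixed complement to \(\ker\ad_M\) then eliminate all non-kernel terms at each grade. Since the commutator formulas in Lemma \ref{StrCons} have rational coefficients independent of the coefficient ring, every homological equation admits a unique \(F\)-valued solution, and the procedure converges in the \((x,y)\)-adic topology to a vector field of the form \eqref{Pclas}.

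The boundary conditions \(a(\0)=1\) and \(a_{-1}(\0)=a_0(\0)=b_0(\0)=0\) are forced because every transformation used reduces to the identity at \(\mu=\0\), so specialization to \(\mu=\0\) recovers Lemma \ref{cl} applied to \(v(x,y)=-x\partial_y+{\rm h.o.t.}\), which contributes no constant, no \(A^{-1}_0\) and no \(B^{0}_0\) term. The main obstacle I anticipate is in the linear-part normalization: one must verify that each implicit-function-theorem step is well-posed over \(F\), but this reduces to the invertibility of a finite-dimensional \(\mathbb{R}\)-linear map at \(\mu=\0\), which is immediate from the nondegeneracy of the underlying \(\Sl\)-representation on linear vector fields.
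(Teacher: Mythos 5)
Your proposal is correct in substance and takes essentially the same route as the paper: run the classical reduction of Lemma \ref{cl} (which the paper cites via Lemmas \ref{AReduce}--\ref{BReduce}) with coefficients in the ring \(\mathbb{R}[[\mu]]\), so that only \(A^{-1}_k\)- and \(B^{0}_k\)-terms survive, and remove the \(A^{0}_{-1}\)-component of the constant part by a parametric shift in \(x\) --- exactly the ``primary shift'' of \cite{Murd09Mal} that the paper invokes. One technical slip should be repaired before the argument is airtight: the splitting you use, \(\LST_n=\IM\ad_M\oplus\ker\ad_M\) with \(M=A^{-1}_0\), is false, since \(\ad_M\) is nilpotent and, for instance, \(A^{-1}_m=[M,A^{0}_m]\) lies in \(\IM\ad_M\cap\ker\ad_M\). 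The correct \(\Sl\)-fact underlying the reduction is \(\LST_n=\IM\ad_{A^1_0}\oplus\ker\ad_M\) (image of one nilpotent of the triad, kernel of the opposite one); accordingly the homological equation is solved with \(\ad_{A^1_0}\) (the linear part at \(\mu=\0\) being \(A^1_0=-x\frac{\partial}{\partial y}\)), and the generator \(Y_n\) should be taken in a complement of \(\ker\ad_{A^1_0}\) rather than of \(\ker\ad_M\), since a complement of \(\ker\ad_M\) can meet \(\ker\ad_{A^1_0}\) nontrivially and then fails to surject onto \(\IM\ad_{A^1_0}\). With this correction your grade-by-grade induction over \(\mathbb{R}[[\mu]]\), together with the shift handling the constant term and the linear-part normalization killing the \(A^{0}_0\)-component, reproduces the paper's proof.
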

\bpr
Lemmas \ref{AReduce}--\ref{BReduce} prove that the only nonlinear terms \(a_k(\mu)A^{-1}_k\) and \(b_k(\mu)B^{0}_k\) can stay in the classical normal forms. Following \cite{Murd09Mal}, by a primary shift (\(x\mapsto x-a^0_{-1}(\mu)),\) we can simplify terms of the form \(A^{0}_{-1}a^0_{-1}(\mu).\)
\epr
For the rest of this section assume that there exist \(k\) and \(l\) such that \(a_{k}(0), b_{l}(0)\neq 0.\) Then, define
\bes
r_1:= \min\big\{k\,|\, a_{k}(0)\neq 0, k\geq 1\big\} \hbox{ and } s:= \min \big\{l\,|\, b_l(0)\neq 0, l\geq 1\big\},
\ees and let \(2s<r_1.\) Note that \(r_1\) must be iteratively updated following (similar to) Remark \ref{Rem3.9}.
\begin{lem}\label{rs1p}
There exists invertible changes of variables and parametric time rescaling that transform \(v\) given by Equation (\ref{Pclas}) into the \((r_1-s+1)\)-th level parametric normal form
\ba
v^{(r_1-s+1)}= a(\mu) A^1_0+ \sum^{s-2}_{j=0} b_j(\mu) B^0_j+ B^0_s+ \alpha_{r_1} A^{-1}_{r_1}+ \sum^\infty_{i=-1} a_i(\mu) A^{-1}_i,\ea
where \(a_{{r_1}+s^2+s}(\mu)=0\) and \(a_{k(s+1)+2s}(\mu)=0\) for any \(k\neq s, k\geq 0, \mu\in \mathbb{R}^p.\) Furthermore, \(a_{i}(0)=0\) for all \(-1\leq i\leq r_1\) and \(b_j(0)=0\) for \(0\leq j\leq s-2\). Besides, \(a_{s+r_2+{s}^{2}}(\mu)=0\) when \(r_1= s(s+1)+2s\) and \(r_2<2s^2+4s\).
\end{lem}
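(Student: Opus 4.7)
The plan is to run the $(r_1-s+1)$-step hypernormalization of Section \ref{sec3} verbatim over the ring $F=\mathbb{R}[[\mu]]$, while making use of the extra reparametrization slot $\PM$ that becomes active only at grades $\geq r_1+2$. Because every structure constant of Lemma \ref{StrCons} and every reduction identity used in Section \ref{sec3} is $\mu$-independent, the homological computations extend unchanged to coefficients in $F$; the only genuinely new ingredient is the summand $D_\mu(v_{n-k})\,P_k$ in the map $d^{n,N+1}$.

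First I would apply the parametric analogues of Lemmas \ref{AReduce} and \ref{BReduce} to collapse $v(x,y,\mu)$ into the classical form (\ref{Pclas}), then rescale time by $\beta(\mu)=b_s(\mu)^{-1/(s+1)}$ and state variables accordingly to force the coefficient of $B^0_s$ to be $1$ (legitimate because $b_s(0)\neq 0$), and finally use a parametric primary shift $x\mapsto x-c(\mu)$ to absorb the $A^0_{-1}$-type contribution from $a_{-1}(\mu)$. This places the system in parametric $(s+1)$-th level form with grade-$s$ leading piece $\mathbb{B}_s$. Sweeping grade by grade for $s+1\leq n\leq r_1+1$, I would then apply the parametric counterparts of the kernel-of-$d^{n,s+1}$ generators of Section \ref{sec3}: Corollary \ref{Z0} (via Lemma \ref{ZnmXs}) removes every $B^0_{m+s}(\mu)$ with $m\geq 1$; Corollary \ref{Bk} eliminates $A^{-1}_{k(s+1)+2s}(\mu)$ and $B^0_{k(s+1)+s}(\mu)$ jointly for every $k\neq s$; Lemma \ref{rss} kills $A^{-1}_{r_1+s^2+s}(\mu)$; and, in the exceptional branch $r_1=s(s+1)+2s$ with $r_2<2s^2+4s$, the triple generator (\ref{3.13}) together with Lemma \ref{3rdlevl} eliminates $A^{-1}_{s+r_2+s^2}(\mu)$. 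At each step the iterative update of $r_1$ dictated by Remark \ref{Rem3.9} is performed, and the resulting linear system over $F$ is solved using the same invertible coefficients $a^s_{m,n}$, $\zeta^{r,s,n}_{m,n}$, $c^{r,s}_{m,n}$ already computed in Section \ref{sec3}.

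The last step treats the low-grade tail. By the definition of $s$ one has $b_j(0)=0$ for $j<s$, so $b_j(\mu)B^0_j$ contributes only at grades $\geq j+(r_1+2)$; at such grades the image of $\ad_{\mathbb{B}_s}$ (and of reparametrization against the $\mu$-free leading $v_s=\mathbb{B}_s$) does not reach the $B^0_j$-direction for $0\leq j\leq s-2$, so these terms must persist and constitute the essential low-grade parametric tail. The only low-grade $B$-coefficient that can be removed is $b_{s-1}(\mu)B^0_{s-1}$: the parametric $y$-shift generator $\mu A^0_{-1}$ satisfies $[\mathbb{B}_s,A^0_{-1}]=\frac{s(s+2)}{s+1}B^0_{s-1}-A^0_{s-1}$, supplying the needed $B^0_{s-1}$-direction, after which the $A^0_{s-1}$ tail is absorbed by Lemma \ref{AReduce}. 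The remaining $a_i(\mu)A^{-1}_i$ for $-1\leq i\leq r_1$ (with $a_i(0)=0$) lie outside the image of every generator available at this level and so are retained. The main obstacle is the bookkeeping around the $r_1$-updates of Remark \ref{Rem3.9}: one must verify that as $r_1$ shifts, the cascade of eliminations at grades $k(s+1)+2s$, $r_1+s^2+s$, and $s+r_2+s^2$ remains consistent and that no previously-eliminated term is revived. This is handled by running the eliminations in strict order of increasing grade and by invoking the parametric analogue of Corollary \ref{ker} to identify the admissible kernel directions at each grade.
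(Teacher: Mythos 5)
Your proposal is correct and follows essentially the same route as the paper: the authors likewise normalize $b_s$ to $1$ by a (sign-adjusted) rescaling of time and state, remove $b_{s-1}(\mu)B^0_{s-1}$ via the secondary shift coming from $[A^0_{-1},\mathbb{B}_s]=A^0_{s-1}-\frac{s(s+2)}{s+1}B^0_{s-1}$, and then invoke the parametric versions of Lemma \ref{ZnmXs}, Lemma \ref{rss} and Corollary \ref{Bk} (together with the kernel element \eqref{3.13} in the exceptional branch) exactly as you do. Your extra remarks on the persistence of the $b_j(\mu)B^0_j$ tail for $j\le s-2$ and on the $r_1$-update bookkeeping are consistent with, and merely expand on, the paper's terse argument.
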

\bpr
Let \(k=0\) in Equation (\ref{XsBkk}) and \(n=m=0\) in Lemma \ref{ZnmXs}. Then, the associated solutions can be used to simplify \(A_{2s}^{-1}\mu^\m\) and \(B_{s}^0\mu^\m\) for any \(\m\) with \(|\m|\geq 1.\) Furthermore since \(b_s(\0)\neq0,\) transformations \((x,y):=(\alpha X,\beta Y)\) and \(t:=\beta \tau\) (for \(\alpha= {\rm sign}(b_s(\0))\) and \(\beta:=\frac{1}{\sqrt[s+1]{b_s(\0)\alpha^s}}\)) change the coefficient \(b_s(\0)\) into \(1.\) Because \([A^{0}_{-1}, \mathbb{B}_s]= A^0_{s-1}-\frac{s(s+2)}{s+1}B^0_{s-1},\) (by a secondary shift; see \cite{Murd09Mal}) terms of the form \(b_{s-1}(\mu)B^0_{s-1}\) can be eliminated. The rest of the proof follows the grading structure and parametric versions of Lemmas \ref{ZnmXs}, \ref{rss}, and Corollary \ref{Bk}. \epr

An \(N\)-degree truncated \((r_1-s+1)\)-th level parametric normal form \(w_N(x, y, \mu)\) is given by
\ba
w_N := A^1_0a(\mu)+ \sum^{s-2}_{i=0} B^0_ib_i(\mu)+ B^0_s+ \alpha_{r_1} A^{-1}_{r_1}+ \sum^{N-1}_{j=-1} a_j(\mu) A^{-1}_j,\ea
for \(N>r_1\). Then, an \(N\)-degree truncated parametric deformation of \(v\) given by Equation (\ref{eq1}) is called a {\it non-degenerate perturbation} if
\be\label{rank}\rank \Le(\frac{\partial w_N}{\partial \mu}\Big|_{\mu=0}\Ri)= n+s+1.\ee Here, \(n\) and the (increasing) sequence \(j_1, j_2, \ldots, j_n\) are defined by
\bes \Le\{j\,|\, 1\leq j<N, j\neq r_1+s^2+s\Ri\}\setminus\Le\{k(s+1)+2s\,|\, k\neq s, k\geq 0\Ri\}= \{j_1, j_2, \ldots, j_n\},\ees when
\(r_1\neq s(s+1)+2s\). In this paper we also consider \(r_1:= s(s+1)+2s\) when \(r_2<2s^2+4s\). Then, \(n\) and the sequence \(j_i\) are instead derived by
\bes \Le\{j\,|\, 1\leq j<N, j\neq 2s^2+4s,j\neq s+r_2+{s}^{2}\Ri\}\setminus\Le\{k(s+1)+2s\,|\, k\neq s, k\geq 0\Ri\}= \{j_1, j_2, \ldots, j_n\}.\ees
The following theorem presents the main result of this paper.

\begin{thm}\label{Thm}
There exist invertible reparametrization, parametric time rescaling and changes of state variables such that any non-degenerate perturbation of the generalized saddle-node case of Equation (\ref{eq1}) can be transformed into \(N\)-degree truncated parametric normal form
\ba\nonumber
\dot{x}&= & \mu_{s}+\mu_{s+1}y+ \sum^n_{i=1} (\alpha_{j_i}+\mu_{i+s+1}) y^{j_i+1}+xy^s+\sum^{s-2}_{k=0} \mu_{k+1}xy^{k},
\\\label{PNF}
\dot{y}&=& -x+ y^{s+1}+\sum^{s-2}_{k=0} \mu_{k+1}y^{k+1},
\ea where \(\alpha_{j_i}=0\) for all \(j_i<r_1,\) and \(\alpha_{r_1}\neq 0.\) The differential system (\ref{PNF}) yields the Hamiltonian--Eulerian decomposition.
\end{thm}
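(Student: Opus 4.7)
The plan is to combine Lemma \ref{rs1p} with a standard inverse function theorem argument enabled by the non-degeneracy condition \eqref{rank}. Starting from a non-degenerate perturbation $w(x,y,\mu)$ of \eqref{eq1}, I would first invoke Lemma \ref{rs1p} to reduce $w$ to the $(r_1-s+1)$-th level intermediate parametric normal form
\[
w^{(r_1-s+1)} = a(\mu)A^1_0 + \sum_{j=0}^{s-2} b_j(\mu)B^0_j + B^0_s + \alpha_{r_1} A^{-1}_{r_1} + \sum_{i=-1}^\infty a_i(\mu) A^{-1}_i,
\]
in which the forbidden Hamiltonian indices $k(s+1)+2s$ (for $k\neq s$) and $r_1+s^2+s$ have been eliminated, every Eulerian term with index $\geq s-1$ other than $B^0_s$ has been removed, $a(0)=1$, and $a_i(0)=b_j(0)=0$. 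In the $N$-degree truncation, the only surviving parameter-dependent coefficients are $a(\mu)$, the $b_j(\mu)$ for $0\leq j\leq s-2$, $a_{-1}(\mu)$, $a_0(\mu)$, and the $a_{j_i}(\mu)$ for $i=1,\ldots,n$.

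Next I would apply the parametric time rescaling by $a(\mu)\in\R$, which is invertible since $a(0)=1$, to normalize the $A^1_0$-coefficient to $1$. Division by the unit $a(\mu)$ rescales every other coefficient but preserves the vanishing of $a_i(0)$ and $b_j(0)$ and the rank of the Jacobian of the coefficient map at $\mu=0$. No new term types appear at this stage.

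The central step is a change of parameters. Consider
\[
\Phi(\mu):=\bigl(b_0(\mu),\ldots,b_{s-2}(\mu),\,a_{-1}(\mu),\,a_0(\mu),\,a_{j_1}(\mu)-\alpha_{j_1},\ldots,a_{j_n}(\mu)-\alpha_{j_n}\bigr)\in\mathbb{R}^{n+s+1}.
\]
After the preceding invertible transformations, the hypothesis \eqref{rank} is equivalent to $\rank D\Phi(0)=n+s+1$, so by the inverse function theorem there is a local diffeomorphism of parameter space in which $\Phi$ becomes the projection $(\mu'_1,\ldots,\mu'_{n+s+1})$; the remaining components of $\mu'$ do not occur in the $N$-truncation of the vector field and may be discarded. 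Relabelling the new parameters gives $\mu_{k+1}=b_k(\mu)$ for $0\leq k\leq s-2$, $\mu_s=a_{-1}(\mu)$, $\mu_{s+1}=a_0(\mu)$, and $\alpha_{j_i}+\mu_{i+s+1}=a_{j_i}(\mu)$, which is exactly the form \eqref{PNF}.

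The main obstacle I anticipate is verifying invariance of rank under all transformations used to reach the intermediate form: each step (state change, time rescaling by $a(\mu)$, primary/secondary shifts, and the reparametrizations used in Lemma \ref{rs1p}) induces a map on the relevant coefficient space whose linearization at $\mu=0$ must be shown invertible so that the rank of $D\Phi(0)$ coincides with the rank of $\partial w_N/\partial\mu|_{\mu=0}$. Once this is in hand, the Hamiltonian–Eulerian decomposition claimed in the statement is automatic, since neither the reparametrization nor the rescaling re-introduces any $Z$-type, $B^l_k$ (for $l>0$), or forbidden $A^l_k$ (for $l>-1$) terms already ruled out.
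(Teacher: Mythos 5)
Your proposal is correct and follows essentially the same route as the paper, whose proof of this theorem is a one-line appeal to Lemma \ref{rs1p} together with the rank condition \eqref{rank}; you have simply made explicit the reparametrization step (inverse function theorem applied to the coefficient map) that the paper leaves implicit. Note that the paper defines non-degeneracy directly as a rank condition on the already-normalized truncation \(w_N\), so the rank-invariance issue you flag as the main obstacle does not actually arise in the paper's formulation.
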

\bpr
The proof is straightforward by Lemma \ref{rs1p} and rank condition (\ref{rank}).
\epr

\section{Examples }\label{sec6}

In this section we present some useful formulas for normal form computations of Bogdanov--Takens singularity. First we derive the coefficients associated with the classical normal forms in Eulerian--Hamiltonian decomposition. Then, explicit formulas of some parametric normal form coefficients for the case \((s, r_1)=(1, 3)\) and \((s, r_1)=(1, 4)\) are presented. Given our Maple program, computation of the remaining cases can be readily derived.
\begin{prop}\label{Prop6.1} Consider a differential system given by
\bas
\frac{dx}{dt}&=& \sum_{i+j\geq 2} a_{i,j} x^iy^j,
\\
\frac{dy}{dt}&=&-x+ \sum_{i+j\geq 2} b_{i, j} x^i y^j.
\eas Then, some coefficients of the classical normal form
\be\label{6.1}
\left\{
  \begin{array}{ll}
\dot{x}=\sum^\infty_{k=1} \tilde{a}_{k} y^{k+1}+ \sum^\infty_{k=1} \tilde{b}_{k}xy^{k},
\\
\dot{y}=-x+ \sum^\infty_{k=1} \tilde{b}_{k}y^{k+1} & \hbox{.}
  \end{array}
\right.
\ee
are as follows:
\bas
\tilde{a}_1&=&a_{{0,2}},
\\
\tilde{a}_2&=&a_{{0,3}}-b_{{1,1}}a_{{0,2}}-\frac{4}{9}{b_{{0,2}}}^{2}-\frac{1}{9}{a_{{1,1}}}^{2
}+\frac{5}{9}a_{{1,1}}b_{{0,2}},
\\
\tilde{a}_3&=&a_{{0,4}}-\frac{1}{3}b_{{0,2}}a_{{1,1}}b_{{1,1}}-\frac{3}{2}b_{{1,1}}a_{{0,3}}-\frac{1}{2}a_{{2,0}}a_{{0,3}}+\frac{2}{3}
{b_{{0,2}}}^{2}a_{{2,0}}-\frac{1}{2}b_{{0,2}}a_{{1,1}}a_{{2,0}}
+\frac{1}{18}a_{{1,1}}b_{{2,0}}a_{{0,2}}
\\&&
-{\frac {14}{9}}b_{{0,2}}b_{{2,0}}a_{{0,2}}-\frac{2}{3}b_{{1,2}}a_{{0,2}}+\frac{1}{6}a_{{2,1}}a_{{0,2}}-\frac{1}{12}{a_{{2,0}}}^{2}a_{{0,2}}
+{\frac{7}{12}}{b_{{1,1}}}^{2}a_{{0,2}}-\frac{1}{6}a_{{1,1}}a_{{1,2}}+\frac{2}{3}b_{{0,2}}a_{{1,2}}
\\&&
+\frac{1}{2}a_{{2,0}}b_{{1,1}}a_{{0,2}}+\frac{1}{12}{a_{{1,1}}}^{2}b_{{1,1}}+\frac{1}{2}a_{{1,1}}b_{{0,3}}-b_{{0,2}}b_{{0,3}
}+\frac{1}{12}{a_{{1,1}}}^{2}a_{{2,0}},
\eas and
\bas
\tilde{b}_{1}&=&\frac{1}{3}a_{{1,1}}+\frac{2}{3}b_{{0,2}},
\\
\tilde{b}_2&=&\frac{1}{4}a_{{1,2}}+\frac{3}{4}b_{{0,3}}
-\frac{1}{8}a_{{1,1}}b_{{1,1}}-\frac{1}{8}a_{{1,1}}a_{{2,0}}+\frac{1}{4}b_{{0,2}}a_{{2,0}},
\\
\tilde{b}_3&=&\frac{1}{5}a_{{1,3}}+\frac{4}{5}b_{{0,4}}-{\frac{8}{45}}{b_{{0,2}}}^{2}b_{{2,0}}+\frac{1}{30}
{b_{{1,1}}}^{2}a_{{1,1}}+\frac{1}{15}b_{{0,2}}{b_{{1,1}}}^{2}
+\frac{1}{15}a_{{1,1}}{a_{{2,0}}}^{2}-{\frac {4}{15}}b_{{0,2}}{a_{{2,0}}}^{2}
\\&&-\frac{1}{9}b_{{0,2}}a_{{1,1}}b_{{2,0}}
-{\frac {1}{90}}{a_{{1,1}}}^{2}b_{{2,0}}+\frac{1}{10}a_{{2,0}}a_{{1,1}}b_{{1,1}}
-\frac{2}{5}b_{{0,3}}b_{{1,1}}-\frac{1}{5}b_{{0,3}}a_{{2,0}}
-\frac{1}{5}a_{{1,2}}b_{{1,1}}-\frac{1}{5}a_{{1,2}}a_{{2,0}}
\\&&-\frac{1}{15}a_{{1,1}}b_{{1,2}}+{\frac {4}{15}}b_{{0,2}}b_{{1,2}}
-\frac{1}{30}a_{{1,1}}a_{{2,1}}+\frac{1}{3}b_{{0,2}}a_{{2,1}}.
\eas
\end{prop}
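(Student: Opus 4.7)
The plan is to carry out the classical-normal-form reduction of Lemma \ref{cl} grade by grade and to record the explicit polynomial dependence of the surviving coefficients on the input data $a_{i,j},b_{i,j}$. Since the statement only concerns $\tilde{a}_k,\tilde{b}_k$ for $k\le 3$, it suffices to normalize the homogeneous parts of the vector field of degrees $2$, $3$ and $4$ in $(x,y)$.

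First I would expand the right-hand side of the given system in the basis $\{A^l_k,B^l_k\}$ of homogeneous vector fields defined in \eqref{Alk}--\eqref{Blk}. Since this basis spans the whole space of degree-$(k+2)$ polynomial vector fields, inverting \eqref{Alk}--\eqref{Blk} expresses $v$ uniquely as
\bes
v=A^1_0+\sum_{k\ge 1}\Le(\sum_{l}\alpha^l_k A^l_k+\sum_{l}\beta^l_k B^l_k\Ri),
\ees
with $\alpha^l_k,\beta^l_k$ explicit linear combinations of $\{a_{i,j},b_{i,j}:i+j=k+2\}$; these linear formulas can be read off mechanically.

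Next, for each $k\in\{1,2,3\}$ in turn, I would choose a grade-$k$ state transformation generator
\bes
g_k=\sum_{l\ne -1}\xi^l_k A^l_k+\sum_{l\ne 0}\eta^l_k B^l_k
\ees
so that $[A^1_0,g_k]$ cancels the non-normal components (those outside $\Span\{A^{-1}_k,B^0_k\}$) of the current grade-$k$ part of $v$. By the $\Sl$-structure constants in Lemma \ref{StrCons}, $\ad_{A^1_0}$ is invertible on the complement of $\ker\ad_M$, so the coefficients $\xi^l_k,\eta^l_k$ are uniquely determined rational functions of the current grade-$k$ data. After applying $\exp(\ad_{g_k})$, the grades $<k$ are unchanged, the grade-$k$ part reduces to $\tilde{a}_k A^{-1}_k+\tilde{b}_k B^0_k$, and the grades $>k$ receive corrections $[g_k,\,\cdot]+\tfrac12[g_k,[g_k,\,\cdot]]+\cdots$ which must be added into the input data of all subsequent grades before the next normalization step.

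The main obstacle is purely computational: the cascade of corrections grows quickly with grade. For $\tilde{a}_3$, one must assemble (i) the original grade-$3$ component of $v$, (ii) the Lie-bracket corrections $[g_1,v_3]+[g_2,v_2]$, (iii) the iterated-bracket correction $\tfrac12[g_1,[g_1,v_2]]$, (iv) the grade-$3$ contribution arising from $[g_1,v_2]$ being further modified by the grade-$2$ generator, and finally (v) the projection onto $A^{-1}_3$ after the grade-$3$ generator $g_3$ eliminates the remaining non-normal directions; each contribution carries rational coefficients determined by the structure constants and by the explicit reductions of Lemmas \ref{AReduce}--\ref{BReduce}. Collecting these and converting back to $(x,y)$ coordinates via $A^{-1}_k=y^{k+1}\ddx$ and $B^0_k=xy^k\ddx+y^{k+1}\ddy$ produces the formulas for $\tilde{a}_k,\tilde{b}_k$ given in the proposition. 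The calculation is mechanical but lengthy, and is most reliably verified by the Maple implementation referenced earlier in the paper.
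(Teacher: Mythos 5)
Your outline is essentially the paper's own proof: the authors' entire argument for Proposition \ref{Prop6.1} is ``a symbolic computation using Maple,'' and the grade-by-grade Lie-transform cascade you describe (expand in the $A^l_k,B^l_k$ basis, solve the homological equation at each degree, push the iterated-bracket corrections into higher degrees, project onto $\Span\{A^{-1}_k,B^0_k\}$) is precisely what that program implements. So the strategy matches and, carried out faithfully, it does produce the stated formulas.

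There is, however, one concrete slip in your setup of the generator space. From Lemma \ref{StrCons}, $[A^1_0,A^l_k]=(l-k-1)A^{l+1}_k$ and $[A^1_0,B^l_k]=(l-k)B^{l+1}_k$, so $\ad_{A^1_0}$ \emph{raises} the upper index and its kernel in grade $k$ is $\Span\{A^{k+1}_k,B^k_k\}$, while its image misses exactly the lowest-weight vectors $A^{-1}_k,B^0_k$. The generator $g_k$ must therefore be taken in a complement of $\Span\{A^{k+1}_k,B^k_k\}$, and in particular it \emph{must contain} $A^{-1}_k$ and $B^0_k$ components, since these are the only terms whose bracket with $A^1_0$ produces $A^0_k$ and $B^1_k$. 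Your choice $g_k=\sum_{l\neq-1}\xi^l_kA^l_k+\sum_{l\neq0}\eta^l_kB^l_k$ excludes exactly the wrong pair: it still contains the kernel (so $\xi^{k+1}_k,\eta^k_k$ are undetermined) and it cannot remove the $A^0_k$ and $B^1_k$ components of $v_k$, so the grade-$k$ part would not land in $\Span\{A^{-1}_k,B^0_k\}$. Replacing the exclusion $l\neq-1,\ l\neq0$ by $l\neq k+1,\ l\neq k$ fixes this, and the rest of your argument goes through as the paper intends.
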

\bpr The proof follows a symbolic computation using Maple.
\epr

The next proposition provides the formulas for parametric normal forms of the case \(s=1\) and \(r_1=3\) and \(4.\)

\begin{prop}\label{6.2} Consider a differential system given by Equation \eqref{6.1}, and assume that \(\tilde{a}_1=\tilde{a}_2=0,\) \(\tilde{b}_1\neq 0,\) and \(\tilde{a}_3\tilde{b}_1\neq \frac{2}{9}\tilde{b}_2\). Let \(a_k=\tilde{a}_k\tilde{b}_1|\tilde{b}_1|^{\frac{k}{2}}\) and \(b_k=\tilde{b}_k|\tilde{b}_1|^{\frac{k+1}{2}}\) for \(k>1.\) Then, the \(8\)-degree truncated parametric normal form of any non-degenerate perturbation of the system is given by
\ba\nonumber
\dot{x}&= & xy+\mu_{1}+\mu_{2}y+ \mu_{3}y^2+ \mu_{4}y^3+ (\alpha_3+\mu_5) y^4+ (\alpha_4+\mu_6) y^5+ \alpha_7 y^8,
\\\label{PNF6}
\dot{y}&=& -x+ y^{2},
\ea where
\bas
\alpha_3&:=&a_{3}-\frac{2}{9}b_{2},
\\
\alpha_4&:=&a_4+\frac{53}{81}{b_2}^2-\frac{8}{3}b_2a_3-\frac{1}{2}b_3,
\eas and \(\alpha_7\) follows Equation \eqref{alpha7} in Appendix. When \(\tilde{a}_3\tilde{b}_1=\frac{2}{9}\tilde{b}_2\) and \(5{\tilde{a_3}}^2|\tilde{b}_1|+4\tilde{a}_4\tilde{b_1}\neq 2\tilde{b}_3\), the \(8\)-degree truncated parametric normal form of any non-degenerate perturbation of the system is

\ba\nonumber
\dot{x}&= & xy+\mu_{1}+\mu_{2}y+ \mu_{3}y^2+ \mu_{4}y^3+\mu_5 y^4+ (\hat{\alpha}_4+\mu_6) y^5+ (\hat{\alpha}_5+\mu_7) y^6+ \hat{\alpha}_7 y^8,
\\\label{PNF6D}
\dot{y}&=& -x+ y^{2},
\ea 
where \(\hat{\alpha}_7\) is given in Equation \eqref{alphatilde7} in Appendix and
\bas
\hat{\alpha}_4&:=&\frac{1}{4a_4 -2b_3 +5 {a_3} ^{2}}\Big({b_3} ^{2}-5{ { {a_3} ^{2}b_3 }}+4{{ {a_4}^{2}}
}+{\frac {25}{4}}{ { {a_3} ^{4}}}-4{ {b_3 a_4 }}+10{{ {a_3} ^{2}a_4 }}\Big)\neq 0,
\\
\hat{\alpha}_5&:=&\frac{1}{4a_4 -2b_3 +5 {a_3} ^{2}}\bigg({\frac {136}{3}}{{a_3 b_3 a_4 }}+{\frac {712}{5}}{{ {a_3} ^{3}
a_4 }}-{\frac {16}{5}}{{b_4 a_4 }}-4{{ {a_3}^{2}b_4 }}
-2{{a_5 b_3 }}-{\frac {1268}{15}}{{ {a_3} ^{3}b_3 }}
\\&&-{\frac {26}{3}}{{a_3{b_3}^{2}}}+5{{ {a_3} ^{2}a_5 }}
+{\frac {531}{2}}{{ {a_3} ^{5}}}+\frac{8}{5}{{b_4 b_3 }}-56{{a_3{a_4}^{2}}}+4{{a_5 a_4 }}\bigg).
\eas
Furthermore, assume \(\hat{\alpha}_5\neq 0\). Then, \(\hat{\alpha}_7\) vanishes via applying an appropriate transformation.
\end{prop}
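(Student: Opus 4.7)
The plan is to specialize the general parametric normal form theorem (Theorem \ref{Thm}) to the two cases $(s,r_1)=(1,3)$ and $(s,r_1)=(1,4)$, and then to identify the coefficients $\alpha_3, \alpha_4, \alpha_7$ (resp.\ $\hat{\alpha}_4, \hat{\alpha}_5, \hat{\alpha}_7$) explicitly by grinding through the formulas of Section \ref{sec3}. First I would apply Proposition \ref{Prop6.1} to the given system to express the classical Hamiltonian--Eulerian normal form coefficients $\tilde{a}_k, \tilde{b}_k$ in terms of the original $a_{i,j}, b_{i,j}$. Then, since $\tilde{a}_1 = \tilde{a}_2 = 0$ and $\tilde{b}_1 \neq 0$, the singularity is of generalized saddle-node type with $s=1$. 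In the first case the assumption $\tilde{a}_3 \tilde{b}_1 \neq \tfrac{2}{9}\tilde{b}_2$ will, after the rescaling step $b_s = b_1 \mapsto 1$, translate into $\alpha_3 \neq 0$ (and hence $r_1 = 3$), while in the degenerate case the second genericity condition gives $\hat{\alpha}_4 \neq 0$ (and hence $r_1 = 4$).

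Next I would apply the scaling $\alpha := {\rm sign}(\tilde{b}_1)$, $\beta := |\tilde{b}_1|^{-1/2}$ and time rescaling $t := \beta \tau$ as in the discussion preceding the notation for $\mathbb{B}_s$, to normalize to $b_s = 1$, which explains the specific weighting $a_k = \tilde{a}_k \tilde{b}_1 |\tilde{b}_1|^{k/2}$ and $b_k = \tilde{b}_k |\tilde{b}_1|^{(k+1)/2}$ used in the statement. With this in place, the first level parametric normal form from Lemma \ref{rs1p} applies. The coefficients $\alpha_3$ and $\alpha_4$ arise from the $(s+1)$-th level orbital normal form (Proposition \ref{s+1level}): these are obtained by taking $s=1$ and eliminating all $B^0_k$ terms for $k \geq 2$ using $(\mathcal{A}^0_m,(m+1)Z^0_m)$ from Lemma \ref{ZnmXs}, then using the explicit update formula of Remark \ref{updatedr1} together with the Lie brackets in Lemma \ref{StrCons}. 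The formulas for $\alpha_3$ and $\alpha_4$ are short enough to write down directly after that step.

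The main obstacle is the computation of $\alpha_7$ (and even more so $\hat{\alpha}_7$). Here $\delta(A^{-1}_7) = 7$ and $r_1 = 3$, so the grade difference is $7-3 = 4 = s^2 + 3s$, exactly the resonance case of Lemma \ref{rss} in which $\mathcal{B}^s_s = \mathcal{B}^1_1$ generates an additional nontrivial element of the kernel $\ker d^{r_1 + s^2, r_1 - s + 1}$. Thus one must carry out the $({r_1}-s+1) = 3$rd level normalization, invoking Corollary \ref{r1s1} and Lemma \ref{3rdlevl}, and carefully combining the contributions from $(\mathcal{A}^{n_{1,2}}_{m_{1,2}}, Z^{n_{1,2}}_{m_{1,2}})$, $(\mathcal{A}^{n_{1,2,3},r_1}_{m_{1,2,3}}, Z^{n_{1,2,3},r_1}_{m_{1,2,3}})$ and $\mathbb{Y}^{n_{1,2},r_1}_{m_{1,2}}$ from Corollary \ref{corycalnn}. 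In practice this is done by symbolic computation in the Maple program mentioned in the abstract, but the closed form of $\alpha_7$ listed in the appendix (Equation \eqref{alpha7}) is the final output.

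For the degenerate case $\tilde{a}_3\tilde{b}_1 = \tfrac{2}{9}\tilde{b}_2$, the above computation yields $\alpha_3 = 0$, and the updated $r_1$ becomes $4$, falling into the exceptional branch $r_1 = s(s+1)+2s = 4$ flagged by Proposition \ref{s+1level} and Remark \ref{Rem3.9}. Here the symmetry generator in Equation \eqref{3.13} involving $\mathcal{B}^s_s$, $\mathcal{B}_{s,r_1}$ and $\mathcal{A}^0_{r_1+s^2-s}$ contributes an extra element to the kernel, which must be used to simplify $A^{-1}_{r_1+s^2+s} = A^{-1}_{6}$ and, in combination with the third level, to eliminate $\hat{\alpha}_7$ once $\hat{\alpha}_5 \neq 0$. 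The formula for $\hat{\alpha}_4$ is extracted as the coefficient of $A^{-1}_4$ after the $(s+1)$-th level normalization with $s=1$, and the denominator $4a_4 - 2b_3 + 5a_3^2$ is precisely the update condition in Equation \eqref{r1} specialized to $s=1$, $r_* = 4$. Finally, the rank condition \eqref{rank} of Theorem \ref{Thm} furnishes the reparametrization giving the $\mu_i$ coefficients in Equations \eqref{PNF6} and \eqref{PNF6D}, completing the proof.
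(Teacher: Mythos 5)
Your overall strategy coincides with the paper's proof: fix $s=1$, use the genericity hypotheses together with the update rule of Remark \ref{updatedr1} (Equation \eqref{r1}) to identify $r_1=3$ (resp.\ $r_1=4$ after $A^{-1}_3$, $B^0_2$, $B^0_3$ are removed), normalize $b_1$ to $1$ by the stated rescaling, and then extract the explicit coefficients from the symbolic (Maple) implementation of Theorem \ref{Thm}; the elimination of $\hat{\alpha}_7$ when $\hat{\alpha}_5\neq0$ comes, as you say, from the extra kernel element of Remark \ref{Rem3.9}, since then $r_2=5<2r_1-2s=6$. This is exactly the route the paper takes, and like the paper you ultimately delegate the closed forms of $\alpha_7$ and $\hat{\alpha}_7$ to the computer algebra implementation.

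Two points in your narrative are misattributed, though they do not change the outcome. First, $\delta(A^{-1}_7)=7-s=6$, not $7$, and the resonance of Lemma \ref{rss} lives in grade $r_1+s^2=4$: it governs $A^{-1}_{r_1+s^2+s}=A^{-1}_5$, i.e.\ the absent $y^6$ term of \eqref{PNF6}, and has nothing to do with $\alpha_7$. In the first case $r_1=3\neq s(s+1)+2s$, so by Remark \ref{Rem3.9} the pair $\mathcal{B}^s_s$, $\mathcal{A}^0_{r_1+s^2-s}$ does \emph{not} combine into an extra kernel element; $\alpha_7$ survives because the grade-$4$ kernel elements $\bigl(\mathcal{A}^{n_{1,2}}_{m_{1,2}},Z^{n_{1,2}}_{m_{1,2}}\bigr)$ act as symmetries by Corollary \ref{ker} and Lemma \ref{3rdlevl}, and its value is simply the residue of the lower-level computations. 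Second, in the degenerate case the element \eqref{3.13} does not itself simplify $A^{-1}_6=A^{-1}_{r_1+s^2+s}$ --- that term is already removed at the $(r_1-s+1)$-th level by Lemma \ref{rss} and Corollary \ref{Bk}, and \eqref{3.13} is precisely the leftover combination acting trivially in that grade; it is then spent one level later on $A^{-1}_{s+r_2+s^2}=A^{-1}_7$. With these corrections your outline reproduces the paper's argument.
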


\bpr Given the rescalings \(\tilde{a}_k\) and \(\tilde{b}_k,\) Equation (\ref{6.1}) yields
\ba\label{1stlevel}
\!A_0^1+B_1^0+b_2B_2^0+a_3A_3^{-1}\!+b_3B_3^0+a_4A_4^{-1}\!+b_4B_4^0+a_5A_5^{-1}\!+b_5B_5^{0}+a_6A_6^{-1}\!+b_6B_6^0+a_7A_7^{-1}\!.
\ea The inequality \(\tilde{a}_3\tilde{b}_1\neq \frac{2}{9}\tilde{b}_2\) ensures that \(a_{3}\neq \frac{2b_{2}}{9}\). By Remark \ref{updatedr1} we have  \be r_1=3.\ee Since \(s=1\), we have \(2s<r_1.\)
Next, Theorem \ref{Thm} and symbolic implementation of the results gives rise to Equation \eqref{PNF6}.

Now, assume that \(\tilde{a}_3\tilde{b}_1=\frac{2}{9}\tilde{b}_2\).
This condition leads to elimination of both \(A^{-1}_3\) and \(B^0_2\). Furthermore, \(B^0_3\) can be eliminated by \(\mathfrak{B}^1_2\); see Lemma \ref{BReduce}. Then, the coefficient of \(A^{-1}_4\) is
\bes
a_4-\frac{1}{2}b_3+\frac{5}{4}{a_3}^{2},
\ees and \(5{\tilde{a_3}}^2|\tilde{b}_1|+4\tilde{a}_4\tilde{b_1}\neq 2\tilde{b}_3\) implies that \(4a_4 +5 {a_3} ^{2}\neq 2b_3\). Equation \eqref{r1} infers that \(r_1:=4\) and \(\hat{\alpha}_4\neq 0\). Symbolic computation and Theorem \ref{Thm} via Equation \eqref{PNF} conclude our claims.

Assume that \(\hat{\alpha}_5\neq 0.\) Since \(r_1:= s(s+1)+2s=4\), then \(r_2:=5< 2r_1-2s=6\). Thus by Remark \ref{Rem3.9}, \(A_{7}^{-1}\) is also simplified from the system.
Symbolic computation and Theorem \ref{Thm} via Equation \eqref{PNF} conclude our claims.
\epr
This paper concludes with an example to show that simple conditions can ensure that a parametric system can be reduced to \eqref{PNF6D}.
\begin{exm}
Consider the system
\ba\nonumber
\dot{x}&=&ax^2+ bxy,
\\\label{exm}
\dot{y}&=&-x+ cx^2+d xy+ by^2,
\ea
for \(b\neq 0.\) 
Then,
\bas
& \tilde{a}_3=\frac{1}{4}{b}^{2}(a-d), \qquad \tilde{a}_4= b^2\left(\frac{11}{320}d^{2}-{\frac {117}{320}}a^{2}+{\frac {53}{160}}ad-\frac{2}{5}{b}c \right), &
\\& \tilde{b}_1=b, \qquad \tilde{b}_2=\frac{1}{8}b(a-d), \qquad \tilde{b}_3=\frac{1}{10} b(d^{2}-2a^{2}-3cb+ad). &
\eas
Now \(s=1\) and
\bas
\tilde{a}_3\tilde{b}_1-\frac{2}{9}\tilde{b}_2
&=& \frac{1}{4}b(a-d)\Le(b^2-\frac{1}{9}\Ri).\eas
For \(a\neq d\) and \(b\neq \pm\frac{1}{3}\), \(r_1=3.\) Hence, by Proposition \ref{6.2}, the \(8\)-degree truncated parametric normal form for any non-degenerate perturbation of (\ref{exm}) can be obtained by formulas \eqref{PNF6}.

Assume that \(a= d\), \(c\neq 0,\) and \(b\neq \pm\frac{\sqrt{3}}{2\sqrt{2}}\). Then, we have \(\tilde{a}_3=0,\)
\bas
5{\tilde{a}_3}^2|\tilde{b}_1|+4\tilde{a}_4\tilde{b}_1- 2\tilde{b}_3 = \frac{1}{5}b^2c\Le(3-8b^2\Ri)\neq 0, \hbox{ and }  r_1=4.
\eas Hence, the hypothesis for the parametric normal form \eqref{PNF6D} in Proposition \ref{6.2} is satisfied.

Finally for \(b=\pm\frac{1}{3}\), we have
\ba\label{6.4}
5{\tilde{a_3}}^2|\tilde{b}_1|+4\tilde{a}_4\tilde{b_1}- 2\tilde{b}_3
&=&\left\{
       \begin{array}{ll}
\frac {391}{4860}{a}^{2}-\frac {49}{2430}ad-\frac {293}{4860}
{d}^{2}+\frac {19}{405}c, & \hbox{ if } \quad b=  \frac{1}{3},\\
-\frac {757}{9720}a^2+\frac {73}{4860}ad+\frac {611}{9720}d^2+\frac {19}{405}c
, & \hbox{ if } \quad\;\;\quad b= -\frac{1}{3}. \end{array}\right.
\ea Any non-degenerate perturbation of the system \eqref{exm} is reduced to \eqref{PNF6D} if the number given in formula \eqref{6.4} is nonzero.
\end{exm}

\section{Acknowledgment}

The first author acknowledges Professor James Murdock's generous helps, discussions and remarks (too numerous to individually mention) throughout past several years that directly and indirectly have contributed to this work. The first author hereby would like to thank Professor Henryk \.{Z}oladek for his financial support, hospitality and informative discussions during the first author's summer 2009 visit to the University of Warsaw. Finally, we thank the anonymous referees for their valuable remarks.


\section{Appendix}
In this appendix we present some formulas used in the paper.
\ba\nonumber
\zeta^{r,s, i}_{m,n}&:=&\frac{(r+m+2)(r+m)^{i}_{s}}{(r+m-n+3)^{i}_{s+1}}
-\sum_{l=1}^{i}\frac{(m)^{l-1}_{s}(r+m+ls)^{i-l}_{s}}{\big(r+m-n+3+l(s+1)\big)^{i-l}_{s+1}(m+2-n+s)^{l}_{s+1}}
\\\label{zeta}&&\times\Big({l}^{2}s-nls-2ls+lm+lr+s+ns+rs-nm+r-m-nr\Big).
\ea
\ba\nonumber
\alpha_7\!\!\!\!&:=\!&\frac{1}{2b_2 -9a_3}\bigg({\frac {837}{7}}{ {a_4 b_4 a_3 }{}}-{\frac {282}{7}}{ {b_3 b_4 a_3 }{}}+{\frac {579}{14}}{{b_3 a_5 a_3}{}}-{\frac {99}{35}}{ {b_2 a_6 a_3 }{}}+{\frac{44}{7}}{ {b_2 b_3 a_5 }{}}+{\frac {225}{28}}{ { {a_5}^{2}}{}}
+{\frac {36}{7}}{ { {b_4} ^{2}}{}}
\\\nonumber&&+{\frac {6502276}{31000725}}{{{b_2} ^{6}}{}}-{\frac {3665}{12}}{ {b_2 b_3 a_4 a_3 }{}}+{ {2b_2 a_7 }{}}+{\frac {102387959}{3444525}}{ { {b_2} ^{5}a_3 }{}}-{\frac {20}{7}}{ {b_2 b_6 }{}}
-{\frac {3676}{567}}{ { {b_2} ^{3}b_4 }{}}+{\frac{5989}{810}}{ { {b_2} ^{3}a_5 }{}}
\\\nonumber&&-{\frac {129}{35}}{ { {b_2} ^{2}a_6 }{}}
+{\frac {24}{7}}{ { {b_2} ^{2}b_5 }{}}+{\frac {22}{9}}{ { {b_2} ^{2} {a_4} ^{2}}{ }}-{\frac {71983}{15120}
}{ { {b_2} ^{2} {b_3} ^{2}}{}}+{\frac {3831413}{306180}}{ { {b_2} ^{4}b_3 }{}}-{ {9a_7 a_3 }{}}
-{\frac {982462}{25515}}{ { {b_2} ^{4}a_4 }{ }}
\\\nonumber&&-{\frac {177040867}{765450}}{ { {b_2} ^{4} {a_3} ^{2}}{}}-{\frac {13137}{35}}{ { {b_2} ^{2} {a_3} ^{4}}{ }}+{\frac {236822}{315}}{ { {b_2} ^{3} {a_3} ^{3}}{}}+{\frac {90}{7}}{ {b_6 a_3 }{}}
-{\frac {8154}{35}}{ {b_4  {a_3} ^{3}}{}}
+{\frac {1359}{8}}{ {b_3  {a_3}^{4}}{}}
\\\nonumber&&-{\frac {90}{7}}{ {a_5 b_4 }{}}+{\frac {1253}{320}}{ { {b_3} ^{2} {a_3} ^{2}}{}}+{\frac {12231}{140}}{ {a_6  {a_3} ^{2}}{}}-{\frac {4077}{70}}{ {a_5  {a_3} ^{3}}{}}-{\frac {459}{7}}{ {b_5  {a_3} ^{2}
}{}}-{\frac {4257}{70}}{ {b_2 b_4  {a_3} ^{2}}{}}+{\frac {2397}{10}}{ {b_2 b_3 {a_3} ^{3}}{ }}
\\\nonumber&&+{\frac {90863}{1680}}{ {b_2  {b_3} ^{2}a_3 }{}}
+{{339b_2  {a_4}^{2}a_3 }{}}+{\frac {94}{7}}{ {b_2 a_4 b_4 }{}}-{\frac {24}{7}}{ {b_2 b_3 b_4 }{ }}
+{\frac {906}{5}}{ {b_2 a_4  {a_3} ^{3}}{}}+{\frac {657}{35}}{ {b_2 a_5  {a_3} ^{2}}{}}
-\frac{6}{7}{ {b_2 b_5 a_3 }{}}
\\\nonumber&&
-{\frac {361}{14}}{ {b_2a_5 a_4 }{ }}+{\frac {147286}{567}}{ { {b_2} ^{3}a_4 a_3 }{}}-{\frac{3392327}{34020}}{ { {b_2} ^{3}b_3 a_3 }{ }}
-{\frac {71363}{70}}{ { {b_2} ^{2}a_4  {a_3} ^{2}}{}}+{\frac {1909}{252}}{ { {b_2} ^{2}a_5 a_3 }{ }}
\\\label{alpha7}&&+{\frac {4639}{252}}{ { {b_2} ^{2}b_3 a_4 }{}}+{\frac {6451}{315}}{ { {b_2} ^{2}b_4 a_3 }{}}+{\frac {4782209}{
15120}}{ { {b_2} ^{2}b_3  {a_3} ^{2}}{}}-{\frac {3051}{28}}{ {a_5 a_4 a_3 }{}}+{\frac {2979}{112}}{ {b_3 a_4  {a_3} ^{2}}{}}\bigg).
\ea
\ba\nonumber
\hat{\alpha}_7\!\!\!\!&:=&\frac{1}{4a_4-2b_3 +5 {a_3}^{2}}\bigg({\frac {24}{5}}a_3 a_6 b_3-52a_3 b_5a_4
+\frac{14}{3}a_3 b_5 b_3-\frac {3924}{25}a_3 b_4 a_5-\frac {224442}{35}{a_3}^3b_3 a_4
\\\nonumber&&-\frac {9938}{5}{a_3}^2a_5a_4-\frac {18548}{35}{a_3}^{2}b_4 b_3+\frac {12349}{20}{a_3}^2a_5 b_3 +\frac {263572}{175}{a_3}^{2}b_4 a_4
-6a_3 b_3  {a_4}^2+\frac {239}{5}a_3 {b_3}^{2}a_4
\\\nonumber&& +9a_5 b_3 a_4+\frac{16}{5}b_4 b_3 a_4 +\frac {144}{5}a_3 a_6 a_4-\frac {1030612}{35}{a_3}^{5}a_4 +\frac{60591}{10}{a_3}^{4}a_5
-\frac {36}{5}a_6 a_5 -\frac {50}{7}{a_3}^{2}b_6
\\\nonumber&&+\frac {2499}{5}{a_3} ^{3}b_5-\frac {72}{5}b_4  {a_4}^{2}+8500{a_3}^{3} {a_4}^{2}-\frac {76}{5}a_3  {b_3}^{3}-\frac {32}{5}b_5 b_4+\frac {20}{7}b_6 b_3
-\frac {11808}{25}{a_3}^{3}a_6 +4a_7 a_4
\\\nonumber&&-\frac{36}{5}a_5{b_3}^{2}+\frac{810141}{700}{a_3}^{3}{b_3}^{2}+\frac {144}{25}a_6 b_4 +5{a_3}^{2}a_7 -\frac {5246687}{1750}{a_3}^{4}b_4-\frac {40}{7}b_6 a_4
-2a_7 b_3 +8b_5 a_5
\\\label{alphatilde7}&&+\frac {104}{25}b_4  {b_3}^{2}+\frac {44397211}{4200}{a_3}^{5}b_3 +\frac {576}{5}a_3{a_5}^{2}+\frac {1296}{25}a_3  {b_4}^{2}-\frac {73881267}{3500}{a_3}^{7}\bigg).
\ea
\ba\nonumber
\!\frac{b^{r,s}_{m,n}c^{r,s}_{m,n}}{(s+2)}\!&\!=\!&\!
\sum_{l=0}^{n}\frac{(n-l+1)\big(r+2+m+ls\big)(m)^{l}_{s}}{\big(m+2+ls\big)(m+2-n+s)^{l}_{s+1}}
\bigg(\frac{\big(r+m+1+(n-1)s\big)(r+m+ls)^{n-l-2}_{s}}{
\big(r+m+3-n+l(s+1)\big)^{n-l}_{s+1}}
\\\nonumber&& -\frac{(r+m+ls)^{n-l-2}_{s}}{(r+m+ls+2)\big(r+m-n+2+(l+1)(s+1)\big)^{n-l-2}_{s+1}}
\bigg)
\\\nonumber\!\!\!&\!&\!
-\sum_{l=1}^{n}\frac{r(s+2)(m)^{l}_{s}}{(m+sl+2)(m+2-n+s)^{l-1}_{s+1}}
\bigg(\frac{\big(r+m+1+(n-1)s\big)\big(r+m+ls\big)^{n-l-2}_{s}}{
(r+m+3-n+ls+l)^{n-l}_{s+1}}
\\\nonumber&& -\frac{(r+m+ls)^{n-l-2}_{s}}{(r+m+ls+2)\big(r+m-n+2+(l+1)(s+1)\big)^{n-l-2}_{s+1}}
\bigg)
\\\nonumber&&
+\sum_{l=1}^{n-1}\frac{\big(n-l\big)(m)^{l-1}_{s}\big(r+m+(l-1)s\big)^{n-l-1}_{s}}{
(r+m+2+ls)(m+2-n+s)^{l-1}_{s+1}\big(r+m-n+2+l(s+1)\big)^{n-l-1}_{s+1}}
\\\nonumber&&+\frac{(m-n+1)(r+m+2)\big(r+m+1+(n-1)s\big)(r+m)^{n-2}_{s}}{(m+2)(r+m+3-n)^{n}_{s+1}}
\\\label{brs}&& -\frac{(m-n+1)(r+m)^{n-2}_{s}}{(m+2)\big(r+m+3-n+s\big)^{n-2}_{s+1}}.
\ea
\end{document}